\documentclass[10pt]{article}
\usepackage[utf8]{inputenc}
\usepackage{amsmath, amssymb, geometry}
\usepackage{graphicx}
\usepackage{comment}
\usepackage{mathrsfs}
\usepackage{color}
\usepackage{bm}
\usepackage{xcolor}

\usepackage{hyperref}
\hypersetup{
    colorlinks = true,     
    linkcolor  = blue!70!black,     
    citecolor  = green!50!black, 
    urlcolor   = purple!80!black   
}

\usepackage[final]{showkeys}
\usepackage{physics}
\usepackage{tikz}\usepackage{footmisc}
\usepackage{tabularray}
\usepackage{bbm}
\usepackage{cancel}
\usepackage{diagbox}
\usepackage{enumitem}
\usepackage[normalem]{ulem}\usepackage{footmisc}
\usepackage{cite}
\allowdisplaybreaks[4]
\usepackage{orcidlink}

\usepackage{footmisc}
\DefineFNsymbols{nodaggers}{%
  * 
  {\S}  
  {\P}  
  {**} 
}
\setfnsymbol{nodaggers}

\usepackage{amsthm}
\newtheorem{lemma}{Lemma}

\newtheorem{remark}{Remark}
\newtheorem{theorem}{Theorem}
\newtheorem{corollary}{Corollary}
\newtheorem{proposition}{Proposition}

\DeclareMathOperator{\Id}{Id}

\newcommand{\Ppt}{\Pi^{\partial_t^2}}
\newcommand{\Pg}{\Pi^{\nabla_{\bx}}}

\newcommand{\bx}{{\boldsymbol{x}}}
\newcommand{\A}{\mathcal{A}}
\newcommand{\V}{\mathcal{V}}

\newcommand{\R}{\mathbb{R}}

\newcommand{\jx}{j_{\bx}}
\newcommand{\jt}{j_t}
\newcommand{\Vx}{V^{\bx}_{j_{\bx}}(\Omega)}
\newcommand{\Vt}{V_{j_t}^t(0,T)}

\newcommand{\Pinfjt}{P_{\infty,j_t}u}
\newcommand{\Pjxinf}{P_{j_{\bx},\infty}u}

\newcommand{\vanishingconditions}{{discrete compatibility conditions\ }}
\newcommand{\Vanishingconditions}{{Discrete compatibility conditions\ }}
\newcommand{\vanishingassumptions}{{discrete compatibility assumptions\ }}

\newcommand{\eremk}{\hbox{}\hfill\rule{0.8ex}{0.8ex}}

\title{A space--time sparse-grid method for the wave equation\footnote{\emph{Funding:} AM and CP acknowledge support from the PRIN project ``ASTICE'' (202292JW3F) funded by the European Union -- NextGenerationEU. This research was also funded in part by the Austrian Science Fund (FWF) project~10.55776/F65. MF, AM, and CP are members of the Gruppo Nazionale Calcolo Scientifico-Istituto Nazionale di Alta Matematica (GNCS-INdAM).}}

\author{Matteo~Ferrari\thanks{Fakult\"at f\"ur Mathematik, Universit\"at Wien, 
Vienna, Austria (\href{mailto:matteo.ferrari@univie.ac.at}{matteo.ferrari@univie.ac.at}, \href{mailto:ilaria.perugia@univie.ac.at}{ilaria.perugia@univie.ac.at})}\ \orcidlink{0000-0002-2577-1421} \and Andrea Moiola\thanks{Dipartimento di Matematica, Universit\`a di Pavia, 
Pavia, Italy (\href{mailto:andrea.moiola@unipv.it}{andrea.moiola@unipv.it}, \href{mailto:chiara.perinati01@universitadipavia.it}{chiara.perinati01@universitadipavia.it})}\ \orcidlink{0000-0002-6251-4440} \and Chiara Perinati\footnotemark[3]\ \orcidlink{0009-0002-8819-928X} \and Ilaria Perugia\footnotemark[2]\ \orcidlink{0000-0003-1368-2883}}

\date{}

\begin{document}

\maketitle

\begin{abstract}
\noindent
We develop a fast space--time numerical scheme for approximating solutions to the linear wave equation.
The approach is based on the sparse-grid combination technique applied to a coercive space--time discretization. Designed for tensor-product space--time discretizations, the method enables efficient parallelization of the resulting solver. We provide a rigorous theoretical analysis establishing convergence rates and computational complexity estimates. Numerical experiments validate the theoretical estimates and demonstrate the efficiency of the proposed method.
\end{abstract}

\medskip\noindent
{\bf Keywords.} Wave equation, space--time methods, sparse grids, combination technique, convergence analysis.

\section{Introduction}

In recent years, an increasing amount of literature has been devoted to the design of \textit{conforming} space--time Galerkin methods for hyperbolic problems (we refer to \cite[\S 1]{FerrariPerugiaZampa2026} for an overview of recent developments, and to~\cite{Gomez2026} for a comprehensive survey of space--time methods). Concurrently, progress in developing fast solvers has allowed space--time methods to compete with traditional method-of-lines approaches that combine spatial finite element discretization with ODE time integrators. Specifically, efficient strategies have been designed to exploit the space--time tensor-product structure, enabling the efficient resolution of the resulting linear systems through their Kronecker product properties. For instance, \cite{Zank2025} investigates various fast direct solution techniques for second-order-in-time numerical schemes based on continuous piecewise polynomials (see \cite{LoliSangalli2025} for an isogeometric analysis approach), while \cite[\S 5.1]{FerrariFraschiniLoliPerugia2025} designs an analogous solver for a first-order-in-time formulation. In this paper, we design a fast conforming space--time numerical scheme that, similarly to \cite{FerrariFraschiniLoliPerugia2025,LoliSangalli2025,Zank2025}, takes advantage of this tensor-product structure; however, rather than relying on the Kronecker properties of the algebraic system, our approach is based on a sparse-grid discretization technique.

\paragraph{Model problem.} Let $Q_T:=\Omega \times (0,T)$ be a space--time cylinder, where $\Omega \subset \R^d$, $d=1,2,3$, is a  bounded, Lipschitz domain with boundary $\partial \Omega$, and  $T>0$ is a given final time. We consider the following initial-boundary value problem for the linear wave equation: find $u: Q_T \rightarrow \mathbb{R}$ such that
\begin{equation} \label{eq:1}
\begin{cases}
    \partial_t^2 u(\bx,t) - \nabla_{\bx} \cdot (c^2(\bx)\nabla_{\bx} u(\bx,t)) = f(\bx,t) &(\bx,t)\in Q_T,
    \\ u(\bx,t) = 0 & (\bx,t)\in\partial \Omega \times (0,T),
    \\ u(\bx,0) = u_0(\bx), \quad \partial_t u(\bx,0) = v_0(\bx) & \bx\in\Omega.
\end{cases}
\end{equation}
Here, the source term~$f$ belongs to~$L^2(Q_T)$, and the wave velocity~$c \in L^\infty(\Omega)$ satisfies~$c = c(\bx) \ge c_0$ a.e., for some $c_0 > 0$. Moreover, the initial data satisfy $u_0 \in H^1_0(\Omega)$ and $v_0 \in L^2(\Omega)$. We are interested in a numerical scheme that approximates a weak solution to \eqref{eq:1}.

\paragraph{Conforming space--time Galerkin scheme.} The underlying space--time scheme we consider was designed and analyzed in \cite{FerrariPerugia2026}. This scheme can be viewed as a variant of the discontinuous Galerkin--continuous Galerkin (DG--CG) method in \cite{Walkington2014,DongMascottoWang2026}, which, in turn, evolved from the classical space--time DG method of~\cite{HughesHulbert1988}. It is based on a second-order-in-time variational formulation of \eqref{eq:1}, where integration by parts is performed in space but not in time. A consistent penalty term is added to weakly enforce the initial condition on the first time derivative of the wave field at $t = 0$; furthermore, time-dependent exponential weights are used in the $L^2$ space--time scalar products to guarantee coercivity of the formulation in the energy norm $H^1(0,T;L^2(\Omega)) \cap L^2(0,T;H^1_0(\Omega))$, providing stability for any choice of discrete tensor-product spaces, with $C^1$-regular temporal component. Since coercivity and continuity are not satisfied with respect to the same norm, quasi-optimality does not follow automatically for any choice of discrete subspaces. However, optimal convergence rates with respect to the mesh size can be still established for specific tensor-product discretizations. For instance, \cite[Appendix A]{FerrariPerugia2026} shows this for $C^1$ splines in time of even polynomial degree on uniform meshes, and an analogous result is expected when using maximal-regularity splines in time.

\paragraph{Contribution.} In the present paper, building on the coercive structure of the scheme of~\cite{FerrariPerugia2026},  we establish optimal convergence rates for a sparse space--time version based on the \textit{combination technique} \cite{GriebelSchneiderZenger1992, BungartzGriebel2004, griebel2014convergence}. This provides an optimal-order convergent approximation to the solution of \eqref{eq:1} with a number of degrees of freedom that scales like that of a single stationary elliptic problem in $\Omega$ on the finest spatial grid.

\smallskip
\noindent
Given nested sequences of spatial $\{V^{\bx}_{j_{\bx}}(\Omega)\}_{j_{\bx}=0,\ldots,J}$ and temporal $\{V_{j_t}^t(0,T)\}_{j_t=0,\ldots,J}$ refinements, our goal is to compute an approximate solution at the finest full-grid space $V_J^{\bx}(\Omega) \otimes V_J^t(0,T)$ for a fixed, large $J \in \mathbb{N}$. Instead of computing this approximate solution directly (the full-grid solution), the combination technique constructs an approximation by combining solutions computed on suitably chosen coarser refinement levels. This \textit{sparse-grid} solution preserves the approximation properties of the \textit{full-grid} solution with a considerably reduced total number of degrees of freedom. The method naturally enables a parallelizable space--time solver, as the problems associated with different coarser refinement levels are independent and can be solved in parallel. To the best of our knowledge, this is one of the few conforming space--time methods for the wave equation that admits efficient parallelization while also being supported by a rigorous theoretical analysis. In this context, we also mention the approach of~\cite[\S 8.1.3]{Zank2025}, which relies on the fast diagonalization method \cite{LynchRiceThomas1964} applied to the Galerkin--Bubnov scheme from \cite{LoscherSteinbachZank2022}. Although this approach has demonstrated good numerical performance, a stability analysis of the corresponding conforming space--time discretization is currently missing. 
Non-conforming schemes for hyperbolic systems that are easily parallelizable and whose convergence analysis is available are those based on tent-pitching, e.g., \cite{MoRi05}.

\smallskip
\noindent
Since the combination technique requires solving multiple discrete problems on strongly anisotropic space--time meshes, applying it to an unconditionally stable formulation is crucial. However, as highlighted above, for the space--time formulation employed herein, quasi-optimality is not inherently guaranteed for arbitrary choices of conforming discrete spaces. As a consequence, analyzing directly the Galerkin method in the sparse-grid space as done in~\cite{GriebelHarbrecht2013} for elliptic problems, \cite{HarbrechtSchwabZank2025} for parabolic problems, and~\cite{BanjaiMelenkNochettoOtarolaSalgadoSchwab2019} for fractional diffusion via higher-dimensional extension, may lead, in the present setting, to reduced convergence rates. Moreover, this procedure can produce an approximate solution different from the combination technique solution, even though both belong to the same space, in contrast to the situation in~\cite{HarbrechtSiebnmorgenPeters2013}, where the two coincide. Therefore, our convergence analysis targets the combination technique solution itself directly, integrating the framework of~\cite{griebel2014convergence} with the space--time analytical techniques developed in~\cite{FerrariPerugia2026}. In contrast to the elliptic setting considered in~\cite{griebel2014convergence}, where standard $L^2$ projections are used, our analysis for the linear wave equation requires tailored space- and time-projection operators.

\smallskip
\noindent
We establish optimal convergence rates of the sparse-grid solution for arbitrary polynomial approximation degrees in both space and time, under suitable regularity assumptions and \vanishingassumptions on the analytical solution at~$t=0$ and on~$\partial\Omega$. Although these conditions do not appear to be strictly sharp, numerical experiments show that relaxing some of them can result in a reduced convergence order. 

\smallskip
\noindent
The combination technique has been  applied to space--time discontinuous Galerkin (DG) methods in \cite{BansalMoiolaPerugiaSchwab2021}. However, since stability in that setting is established only with respect to a mesh-dependent DG seminorm, the convergence analysis of the combination technique presents intrinsic difficulties. Similar challenges would likely arise if one were to apply the combination formula to the conforming space--time method of~\cite{FerrariPerugiaZampa2026}, for which the inf-sup stability relies on a mesh-dependent norm. We finally mention that, when $d>1$ and~$\Omega$ is a tensor product of domains in some coordinate system, the combination technique could be employed in the components of the space variable as well.

\paragraph{Outline.} The paper is organized as follows. In Section~\ref{sec:spacetimemethod}, we recall the space--time discretization method of~\cite{FerrariPerugia2026}, together with the corresponding error estimates. In Section~\ref{sec:combinationformula}, we introduce the space--time sparse-grid method based on the combination technique of~\cite{griebel2014convergence}, discuss its computational complexity, and state the main convergence result in Theorem~\ref{th:1}. Preliminary results on semidiscretization operators required for the proof of Theorem~\ref{th:1} are established in Section~\ref{sec:semidiscr}, while the proof itself is presented in Section~\ref{sec:convergence}. Finally, Section~\ref{sec:numericalexp} reports numerical experiments that validate the theoretical results and illustrate the performance of the proposed method. A summary of the main notation and symbols used throughout the paper is reported in Table~\ref{tab:3}.

\subsection*{Notation}\label{sec:notation}

For a bounded domain~$D\subset\R^d$ and~$k\in\mathbb{N}$, we denote by~$H^k(D)$ the Sobolev space of index~$k$ endowed with the standard Sobolev norm $\|\cdot\|_{H^k(D)}$~\cite{Brezis2010}. We also set
\begin{equation*}
    H_0^1(\Omega) := \{ u \in H^1(\Omega): u(\bx) = 0, \bx \in \partial\Omega\}, \quad\quad H^2_{0,\bullet}(0,T) := \{ u \in H^2(0,T) : u(0)=0 \}.
\end{equation*}
For $s_{\bx} \ge 1$ and $s_t \ge 2$, we also define, with nonstandard notation,
\begin{align*}
    & H^{s_{\bx}}_{0}(\Omega) := H^1_{0}(\Omega) \cap H^{s_{\bx}}(\Omega), \quad H^{s_t}_{0,\bullet}(0,T) := H^2_{0,\bullet}(0,T) \cap H^{s_t}(0,T).
\end{align*}
In the paper, we identify Bochner-type function spaces $H^k(0,T; H^\ell(\Omega))$ for nonnegative integers $k,\ell$ with tensor product of Hilbert spaces, i.e.,
\begin{equation*}
    H^k(0,T; H^\ell(\Omega)) \cong H^\ell(\Omega) \otimes H^k(0,T),
\end{equation*}
where $\cong$ denotes isometric isomorphism and $\otimes$ the Hilbertian tensor product (see, e.g.,  \cite[Theorem~1, Chapter~12, \S~7]{Aubin1979}).

\noindent 
For $u, v \in L^2(0,T)$, we define the exponentially-weighted scalar product and norm by
\begin{equation*}
    (u,v)_{L^2_e(0,T)} := \int_0^T  u(t)\, v(t)\, e^{-t/T} \, \dd t, \qquad \| u \|_{L^2_e(0,T)} := \sqrt{(u,u)_{L^2_e(0,T)}}.
\end{equation*}
This norm is clearly equivalent to the classical $L^2(0,T)$ norm, with equivalence constants independent of~$T$. Indeed, we have
\begin{equation*}
    \frac{1}{e} \| u \|_{L^2(0,T)} \le \| u \|_{L_e^2(0,T)} \le \| u \|_{L^2(0,T)} \quad \text{for all~} u \in L^2(0,T).
\end{equation*}
By integration by parts, the scalar product satisfies
\begin{equation} \label{eq:2}
    (u, v')_{L_e^2(0,T)} = -(u', v)_{L_e^2(0,T)} + \frac{1}{T}(u, v)_{L_e^2(0,T)} + \frac{1}{e}u(T)v(T) - u(0)v(0) \quad \text{for all~} u, v \in H^1(0,T).
\end{equation}
A key consequence is the following property, which motivates the use of the exponential weight in the method:
\begin{equation} \label{eq:3}
    (w, w')_{L_e^2(0,T)} + 
    |w(0)|^2 = \frac{1}{2T}\|w\|^2_{L_e^2(0,T)} + \frac{1}{2e}|w(T)|^2+\frac12 
    |w(0)|^2  \quad \text{for all } w \in H^1(0,T).
\end{equation}
Similarly, for space--time functions $u, v \in L^2(Q_T)$, we define the exponentially-weighted scalar product and norm
\begin{equation*}
    (u,v)_{L^2_e(Q_T)} := \int_0^T \int_\Omega u(\bx,t)\, v(\bx,t)\, e^{-t/T} \, \dd \bx \, \dd t, \qquad \| u \|_{L^2_e(Q_T)} := \sqrt{(u,u)_{L^2_e(Q_T)}}.
\end{equation*}
We report in Table~\ref{tab:3} the symbols used in the paper.

\section{Space--time continuous and discrete variational formulations}
\label{sec:spacetimemethod}

In this section, we briefly review the space–time method introduced in~\cite{FerrariPerugia2026}. We begin by introducing the underlying continuous variational formulation, and then proceed to describe the numerical scheme.

\subsection{Continuous variational formulation}\label{sec:well-posendess}

We define the space 
\begin{equation} \label{eq:4}
    \V(Q_T) := \big(H_0^1(\Omega) \otimes H^1(0,T)\big) \cap \big(L^2(\Omega) \otimes H_{0,\bullet}^2(0,T)\big),
\end{equation}
where~$\otimes$ is the Hilbertian tensor product, and the norm on $\V(Q_T)$
\begin{equation} \label{eq:5}
\begin{aligned} 
    \| u \|^2_{\mathcal{V}(Q_T)} := \frac{1}{T}\big(\| \partial_t u \|_{L^2(Q_T)}^2 + \| c \nabla_{\bx} u \|^2_{L^2(Q_T)} \big)  + \| \partial_t u (\cdot,T)\|^2_{L^2(\Omega)}+ \| \partial_t u(\cdot,0)\|^2_{L^2(\Omega)} + \| c \nabla_{\bx} u(\cdot,T)\|^2_{L^2(\Omega)}.
\end{aligned}
\end{equation}
Defining the bilinear form $\A : \V(Q_T) \times \V(Q_T) \to \R$
\begin{equation} \label{eq:6}
\begin{aligned}
    \A(u,w) := (\partial_t^2 u, \partial_t w)_{L_e^2(Q_T)}  + (\partial_t u(\cdot,0), \partial_t w(\cdot,0))_{L^2(\Omega)} 
    + (c^2\nabla_{\bx} u, \nabla_{\bx} \partial_t w)_{L_e^2(Q_T)},
\end{aligned}
\end{equation}
we consider the following weak formulation of~\eqref{eq:1}: find $u \in \V(Q_T)$ such that
\begin{equation} \label{eq:7}
    \A(u,w) = (f, \partial_t w)_{L^2_e(Q_T)} + (v_0, \partial_t w(\cdot,0))_{L^2(\Omega) } - (c^2 \nabla_{\bx} u_0, \nabla_{\bx} \partial_t w)_{L_e^2(Q_T)} \quad \text{for all } w \in \V(Q_T).
\end{equation}
The following coercivity property holds true (see \cite[Lemma 2.1]{FerrariPerugia2026}\footnote{In \cite[Lemma 2.1]{FerrariPerugia2026}, the bilinear form $\A$ is defined with the standard scalar product, and the operator $\mathcal{L}_T u$ satisfies
$\partial_t \mathcal{L}_T u(t) = e^{-t/T} \partial_t u(t)$.}):
\begin{equation} \label{eq:8}
    \mathcal{A}(u,u) \ge \frac{1}{2e}  \| u \|^2_{\mathcal{V}(Q_T)} \quad \text{for all~} u \in \V(Q_T),
\end{equation}
while a continuity property of the type~$|\mathcal{A}(u,v)|\le C \| u \|_{\mathcal{V}(Q_T)}\| v \|_{\mathcal{V}(Q_T)}$ for all~$u,v\in \mathcal{V}(Q_T)$ 
is not satisfied.
Nevertheless, well-posedness of problem \eqref{eq:7} in $\mathcal{V}(Q_T)$ has been shown in \cite[Theorem 2.1]{FerrariPerugia2026} under the following assumptions on the data:
\begin{equation*}
\begin{aligned}
    f \in L^2(\Omega) \otimes H^1(0,T), \quad f(\cdot,0) & \in L^2(\Omega), 
    \\ u_0 \in H_0^1(\Omega), \quad \nabla_{\bx} \cdot(c^2 \nabla_{\bx} u_0) \in L^2(\Omega), & \quad v_0 \in H^1_0(\Omega).
\end{aligned}
\end{equation*}

\medskip
\noindent
Then, the weak solution to the wave equation \eqref{eq:1} is $u(\bx,t)+u_0(\bx)$, where~$u_0$ is also used to denote its constant-in-time extension to~$Q_T$.

\subsection{Space--time full-grid Galerkin discretization}
\label{sec:numericalscheme}

For the space discretization, we consider discrete spaces
\begin{equation*}
    \Vx\subset H_0^1(\Omega), \qquad N_{j_{\bx}}^{\bx} := \dim(V_{j_{\bx}}^{\bx}(\Omega)),
\end{equation*}
depending on mesh parameters~$h_{\jx} := h_0^{\bx} \, 2^{-\jx}$ with $h_0^{\bx}>0$ and $\jx \in \mathbb{N}$. For the time discretization, we consider discrete spaces
\begin{equation*}
    \Vt\subset H_{0,\bullet}^2(0,T), \qquad N_{j_t}^t := \dim(V_{j_t}^t(0,T)),
\end{equation*}
depending on mesh parameters $h_{\jt} := h_0^t \, 2^{-\jt}$ with $h_0^t >0$ and $\jt \in \mathbb{N}$. We assume that these spaces form nested sequences 
\begin{equation*}
    V_0^{\bx}(\Omega) \subset V_1^{\bx}(\Omega) \subset \ldots \subset V_{j_{\bx}}^{\bx}(\Omega) \subset \ldots \subset H_0^1(\Omega), \quad V_0^t(0,T) \subset V_1^t(0,T) \subset \ldots \subset V_{j_t}^t(0,T) \subset \ldots \subset H_{0,\bullet}^2(0,T).
\end{equation*}
The Galerkin approximation of \eqref{eq:7} reads as follows: find~$u_{j_{\bx},j_t}\in \Vx \otimes \Vt$ such that
\begin{equation} \label{eq:9}
    \mathcal{A}(u_{j_{\bx},j_t}, w_{j_{\bx},j_t}) = (f, \partial_t w_{j_{\bx},j_t})_{L^2_e(Q_T)} + (v_0, \partial_t w_{j_{\bx},j_t}(\cdot,0))_{L^2(\Omega)} - (c^2 \nabla_{\bx} u_0, \nabla_{\bx} \partial_t w_{j_{\bx},j_t})_{L^2_e(Q_T)}
\end{equation}
for all~$w_{j_{\bx},j_t} \in \Vx  \otimes \Vt$. We call the solution $u_{j_\bx,j_t}$ of \eqref{eq:9} a ``full-grid'' approximation of \eqref{eq:7}.

\medskip
\noindent 
Given $u \in \mathcal{V}(Q_T)$, we define $P_{j_{\bx},j_t} u \in \Vx \otimes \Vt$ as the solution of the discrete problem
\begin{equation} \label{eq:10}
     \mathcal{A}(P_{j_{\bx},j_t} u, w_{j_{\bx},j_t}) = \mathcal{A} (u, w_{j_{\bx},j_t})
     \quad\quad \text{for all~} w_{j_{\bx},j_t} \in \Vx  \otimes \Vt.
\end{equation}
From the coercivity in \eqref{eq:8}, this problem is well-posed for any choice of discrete spaces $\Vx$ and $\Vt$. Moreover, if $u$ solves the continuous problem~\eqref{eq:7}, then $P_{j_{\bx},j_t} u$ solves the discrete problem~\eqref{eq:9}.
This also defines the Galerkin projection operators~$P_{j_{\bx},j_t}: \V(Q_T)\to \Vx \otimes \Vt$.

\medskip
\noindent
We introduce the following projection operators:
\begin{itemize}
\item 
$\Pg_{j_{\bx}} : H^1(\Omega) \to \Vx$, defined as the solution to
\begin{equation} \label{eq:11}
    (c^2 \nabla_{\bx} (\Pg_{j_{\bx}} - \Id^{\bx}) w, \nabla_{\bx} w_{j_{\bx}})_{L^2(\Omega)} = 0 \qquad \text{for all~} w_{\jx} \in \Vx,
\end{equation}
\item 
$\Ppt_{j_t} : H^2(0,T) \to \Vt$, defined as the solution to 
\begin{equation} \label{eq:12}
\begin{aligned}
    (((\Ppt_{j_t}-\Id^t) w)'',  w_{j_t}')_{L_e^2(0,T)} + ( (\Ppt_{j_t} - \Id^t) w)'(0)  w_{j_t}'(0) = 0 \qquad \text{for all~} w_{\jt} \in \Vt.
\end{aligned} 
\end{equation}
\end{itemize}
We emphasize that the non-standard projection operator~$\Pi_{j_t}^{\partial_t^2}$ is well-defined, a property that follows directly from~\eqref{eq:3}. Let us assume that, associated with the families of spaces $\{\Vx\}_{j_{\bx}}$ and $\{\Vt\}_{j_t}$, there are approximation degrees $p_{\bx}, p_t \in \mathbb N$ such that, if $w \in H^{p_{\bx}+1}(\Omega)$, then 
\begin{equation} \label{eq:13}
\begin{aligned}
    \| c\nabla_{\bx} (\Pi_{j_{\bx}}^{\nabla_{\bx}}-\Id^{\bx}) w \|_{L^2(\Omega)} \lesssim 2^{-j_{\bx} p_{\bx}} \| w \|_{H^{p_{\bx}+1}(\Omega)}, \quad \| (\Pi_{j_{\bx}}^{\nabla_{\bx}}-\Id^{\bx}) w \|_{L^2(\Omega)} \lesssim 2^{-j_{\bx} (p_{\bx}+1)} \| w \|_{H^{p_{\bx}+1}(\Omega)},
\end{aligned}
\end{equation}
and, if $w \in H^{p_t+3}(0,T)$, then 
\begin{equation} \label{eq:14}
\begin{aligned}
    \| \partial_t (\Pi_{j_t}^{\partial_t^2}-\Id^t) w \|_{L^2(0,T)} \lesssim 2^{-j_t p_t} \| w \|_{H^{p_t+3}(0,T)}, \quad \|  (\Pi_{j_t}^{\partial_t^2}-\Id^t) w \|_{L^2(0,T)} \lesssim 2^{-j_t(p_t+1)} \| w \|_{H^{p_t+3}(0,T)}.
\end{aligned}
\end{equation}
Here and in the rest of the paper, we use the notation $``\lesssim"$ for $``\lesssim C \,"$, whenever the constant $C>0$ is independent of $ j_{\bx}, j_t$, and of any functions to which the estimate is applied.
\begin{remark}[Validity of assumptions~\eqref{eq:13} and~\eqref{eq:14}]\label{rem:1}
Assumption \eqref{eq:13} on $V^{\bx}_{j_{\bx}}(\Omega)$ holds, for instance, for conforming finite elements of degree~$p_{\bx}\ge 1$ on shape-regular simplicial meshes with mesh width $h_{j_\bx}$, provided that $\Omega$ and~$c$ are such that the elliptic regularity assumptions is satisfied for the Dirichlet--Poisson problem with operator~$-\nabla_{\bx} \cdot (c^2(\bx)\nabla_{\bx})$ (see, e.g., \cite[Theorem~22.6]{ErnGuermond2021a}). 
Assumption~\eqref{eq:14} on $V^{t}_{j_t}(0,T)$ holds for $C^1$ splines on a uniform mesh with mesh width $h_{j_t}$ with an even polynomial degree $p_t \ge 2$ (see \cite[Proposition~4.4 and Corollary~4.1]{FerrariPerugia2026}). It is further conjectured that the assumption is satisfied for all $C^{k_t}$ splines with polynomial degree $p_t \ge 2$ such that $p_t - k_t$ is odd; see Table \ref{tab:validity}.
\begin{table}[ht!]
\centering
\begin{tabular}{l|c|c}
    temporal spline type & even $p_t$ & odd $p_t$ \\ \hline $C^1$ splines & $\checkmark$ & $\times$
    \\ $C^{p_t-1}$ splines (maximal-regularity) & $\circ$  & $\circ$
    \\ $C^{k_t}$ splines, odd $k_t$ & $\circ$ & $\times$
    \\ $C^{k_t}$ splines, even $k_t$ & $\times$ & $\circ$
\end{tabular}
\caption{Validity of assumption~\eqref{eq:14}. Symbols indicate proven validity ($\checkmark$), conjectured validity ($\circ$), and not valid ($\times$), depending on spline regularity and polynomial degree parity.}
\label{tab:validity}
\end{table}
\eremk
\end{remark}
\noindent 
Assuming well-posedness of problem~\eqref{eq:7} (see Section~\ref{sec:well-posendess}), \cite[Theorem 4.4]{FerrariPerugia2026} shows that, if the solution~$u$ satisfies
\begin{equation} \label{eq:15}
    u \in (H_0^{p_{\bx}+1}(\Omega) \otimes H^2(0,T)) \cap \big(H^2(\Omega) \otimes H^{p_t+3}_{0,\bullet}(0,T)\big),
\end{equation}
then the following estimate holds:
\begin{equation} \label{eq:16}
\begin{aligned}
    \| (P_{j_{\bx},j_t} - \Id) u\|_{L^2(Q_T)} & \lesssim 2^{-j_{\bx}(p_{\bx}+1)} \| u \|_{H^{p_{\bx}+1}(\Omega) \otimes H^2(0,T)}  + 2^{-j_t(p_t+1)} \| u \|_{H^2(\Omega) \otimes H^{p_t+3}(0,T)}. 
\end{aligned}
\end{equation}
Estimate~\eqref{eq:16} follows from the key intermediate estimate\footnote{
Due to their tensor-product structure, the  extensions $\Pi_{j_\bx}^{\nabla_\bx}\otimes \Id^t$ and $\Id^{\bx} \otimes \Pi_{j_t}^{\partial_t^2}$  to space--time functions of the projectors $\Pi_{j_\bx}^{\nabla_\bx}$ and $\Pi_{j_t}^{\partial_t^2}$ satisfy the following commutativity properties:
\begin{equation*}
\begin{aligned}
    \mathcal{G}_t (\Pi^{\nabla_{\bx}}_{\jx}\otimes \Id^t)w = (\Pi^{\nabla_{\bx}}_{\jx}\otimes \Id^t) \mathcal{G}_t  w, \quad \quad 
    \mathcal{G}_{\bx}(\Id^{\bx}\otimes \Pi^{\partial_t^2}_{\jt})w = (\Id^{\bx}\otimes \Pi^{\partial_t^2}_{\jt}) \mathcal{G}_{\bx} w,
\end{aligned}
\end{equation*}
for any sufficiently regular function $w(\bx,t)$, where $\mathcal{G}_t$ and $\mathcal{G}_{\bx}$ are differential operators acting only on the variable $t$ and $\bx$, respectively.
}
\begin{equation} \label{eq:17}
\begin{aligned}
    \| (P_{j_{\bx},j_t} - \Pi_{j_{\bx}}^{\nabla_{\bx}}\otimes  \Pi_{j_t}^{\partial_t^2}) u \|_{\mathcal{V}(Q_T)} \lesssim \|(\Pi_{j_{\bx}}^{\nabla_{\bx}} \otimes \Id^t-\Id) \partial_t^2 u \|_{L^2(Q_T)} & + \|(\Pi_{j_{\bx}}^{\nabla_{\bx}} - \Id^{\bx}) v_0 \|_{L^2(\Omega)}
    \\ & + \|(\Id^{\bx} \otimes \Pi_{j_t}^{\partial_t^2}-\Id) \nabla_{\bx} \cdot (c^2 \nabla_{\bx} u) \|_{L^2(Q_T)},
\end{aligned}
\end{equation}
(see~\cite[Lemma 4.1]{FerrariPerugia2026}) combined with the Poincar\'e inequality and the approximation properties~\eqref{eq:13} and~\eqref{eq:14} of the projectors.
In particular, when $j_{\bx}=j_t=J$, estimate \eqref{eq:16} becomes
\begin{equation} \label{eq:18}
    \| (P_{J,J}-\Id)u\|_{L^2(Q_T)} \lesssim 2^{-J\min\{p_{\bx}+1,p_t+1\}} \| u \|_{(H^{p_{\bx}+1}(\Omega) \otimes H^2(0,T)) \cap (H^2(\Omega) \otimes H^{p_t+3}(0,T))}. 
\end{equation}
In the following section, we introduce a technique that maintains the same order of accuracy while significantly reducing the computational cost and enabling parallelization of the numerical scheme.

\section{The space--time sparse-grid method}\label{sec:combinationformula}

In this section, we introduce the space--time sparse-grid method based on the combination technique, and analyze its computational complexity in terms of the number of degrees of freedom (DoFs). We then state a convergence result for the resulting approximation, with the proof deferred to Section~\ref{sec:convergence}.

\subsection{The combination technique}
For a fixed maximal level $J \ge 0$, the \emph{combination formula}~\cite{griebel2014convergence} defines the approximation 
\begin{equation} \label{eq:19}
    u^{CF}_J := \sum_{j_{\bx}=0}^J (P_{j_{\bx},J-j_{\bx}} - P_{j_{\bx}-1,J-j_{\bx}})u,
\end{equation}
where $P_{j_{\bx},j_t}$ are the operators defined in \eqref{eq:10}, with the convention~$P_{-1,j_t} u := 0$. The combination formula solution $u^{CF}_J$ is obtained by computing~$2J+1$ Galerkin solutions of \eqref{eq:9}, one for each of the discrete spaces $V_{j_\bx}^\bx(\Omega)\otimes V_{J-j_\bx}^t(0,T)$ with $j_\bx=0,\ldots, J$, and 
$V_{j_\bx-1}^\bx(\Omega)\otimes V_{J-j_\bx}^t(0,T)$ with $j_\bx=1,\ldots, J$. The $2J+1$ solutions~$P_{j_{\bx},j_t}u$ are then combined with $\pm1$ coefficients as in formula~\eqref{eq:19}. The coarsest and finest meshes used have mesh parameters $h_0^\bx$ and $h_0^\bx2^{-J}$ in space, respectively, and $h_0^t$ and $h_0^t2^{-J}$ in time.
Figure~\ref{fig:sparse_combination} illustrates the combination of contributions from the different space and time resolution levels; see also Figure~\ref{fig:MeshesEx1} below for the meshes used in a concrete example. 

\begin{figure}[ht]\centering
\begin{tikzpicture}[scale=.6]

\draw[->](0,0)--(7,0); 
\draw[->](0,0)--(0,7); 
\draw[thick](0,6)--(6,0);
\draw[thick](0,5)--(5,0);

\draw(-.5,0)node{$0$};
\draw(-.5,6)node{$J$};
\draw(6,-0.5)node{$J$};
\draw(7,-.4)node{$j_{\boldsymbol{x}}$};
\draw(-.4,7)node{$j_t$};

\draw[fill](0,0)circle(.07);
\draw[fill](1,0)circle(.07);
\draw[fill](2,0)circle(.07);
\draw[fill](3,0)circle(.07);
\draw[fill](4,0)circle(.07);
\draw[fill](0,1)circle(.07);
\draw[fill](1,1)circle(.07);
\draw[fill](2,1)circle(.07);
\draw[fill](3,1)circle(.07);
\draw[fill](0,2)circle(.07);
\draw[fill](1,2)circle(.07);
\draw[fill](2,2)circle(.07);
\draw[fill](0,3)circle(.07);
\draw[fill](1,3)circle(.07);
\draw[fill](0,4)circle(.07);

\draw[ultra thick,fill,cyan](5,0)circle(.1);
\draw[thick,cyan](4.6,-0.4)circle(.25)node{$-$};
\draw[ultra thick,fill,cyan](4,1)circle(.1);
\draw[thick,cyan](3.6,0.7)circle(.25)node{$-$};
\draw[ultra thick,fill,cyan](3,2)circle(.1);
\draw[thick,cyan](2.6,1.7)circle(.25)node{$-$};
\draw[ultra thick,fill,cyan](2,3)circle(.1);
\draw[thick,cyan](1.6,2.7)circle(.25)node{$-$};
\draw[ultra thick,fill,cyan](1,4)circle(.1);
\draw[thick,cyan](0.6,3.7)circle(.25)node{$-$};
\draw[ultra thick,fill,cyan](0,5)circle(.1);
\draw[thick,cyan](-0.4,4.7)circle(.25)node{$-$};

\draw[ultra thick,fill,orange](6,0)circle(.1);
\draw[thick,orange](6.4,0.3)circle(.25)node{$+$};
\draw[ultra thick,fill,orange](5,1)circle(.1);
\draw[thick,orange](5.4,1.3)circle(.25)node{$+$};
\draw[ultra thick,fill,orange](4,2)circle(.1);
\draw[thick,orange](4.4,2.3)circle(.25)node{$+$};
\draw[ultra thick,fill,orange](3,3)circle(.1);
\draw[thick,orange](3.4,3.3)circle(.25)node{$+$};
\draw[ultra thick,fill,orange](2,4)circle(.1);
\draw[thick,orange](2.4,4.3)circle(.25)node{$+$};
\draw[ultra thick,fill,orange](1,5)circle(.1);
\draw[thick,orange](1.4,5.3)circle(.25)node{$+$};
\draw[ultra thick,fill,orange](0,6)circle(.1);
\draw[thick,orange](0.4,6.3)circle(.25)node{$+$};

\end{tikzpicture}
\caption{Diagram of the combination technique \eqref{eq:19}.}
\label{fig:sparse_combination}
\end{figure}

\noindent 
For $j_{\bx}, j_t \ge 0$, we define the \emph{space--time detail} operator~$\Delta_{j_{\bx},j_t}^P : \mathcal{V}(Q_T) \to V_{j_{\bx}}^{\bx}(\Omega) \otimes V_{j_t}^t(0,T)$ as 
\begin{equation} \label{eq:20}
    \Delta_{j_{\bx},j_t}^P u := (P_{j_{\bx},j_t} - P_{j_{\bx}-1,j_t}) u - (P_{j_{\bx},j_t-1} - P_{j_{\bx}-1,j_t-1}) u,
\end{equation}
with the convention that $P_{j_{\bx},-1} u = P_{-1,j_t} u = P_{-1,-1} u = 0$. Consequently, $u_J^{CF}$ in~\eqref{eq:19} can be equivalently expressed as
\begin{equation} \label{eq:21}
    u_J^{CF} = \sum_{j_{\bx}+j_t \le J} \Delta_{j_{\bx},j_t}^P u.
\end{equation}
For the full-grid solution~$P_{J,J}u$, we have
\begin{equation} \label{eq:21bis}
    P_{J,J}u= \sum_{j_{\bx},j_t \le J} \Delta_{j_{\bx},j_t}^P u, \qquad P_{J,J}u - u_J^{CF} = \sum_{\substack{j_{\bx},j_t \le J  \\ j_{\bx}+j_t > J}} \Delta_{j_{\bx},j_t}^P u.
\end{equation}
These relations are represented in Figure~\ref{fig:hierarchical}.
Each node $(\jx,\jt)$ represents the Galerkin projection~$P_{\jx,\jt}u$, and the $\pm$ signs next to them indicate the~$\pm1$ coefficients with which~$P_{\jx,\jt}u$ enters the stencil~\eqref{eq:20} of the adjacent space--time detail operators.  For the full-grid approximation, when summing over all $\jx,\jt \leq J$, all nodal contributions cancel except for~$P_{J,J}u$, which has coefficient $+1$ (see Figure \ref{fig:hierarchical}, left). For the sparse-grid approximation, when summing over $\jx+\jt \leq J$, all nodal contributions cancel except those on the first lower diagonal $\jx+\jt=J-1$ with coefficient $-1$, and those on the diagonal $\jx+\jt=J$ with coefficient $+1$, giving the combination formula~\eqref{eq:19} (see Figure \ref{fig:hierarchical}, right).

\begin{figure}[ht]
\centering
\begin{minipage}{0.4\textwidth}
\centering
\begin{tikzpicture}[scale=.7]

\draw[->](0,0)--(7,0); 
\draw[->](0,0)--(0,7); 
\draw[-](0,1.5)--(6,1.5); 
\draw[-](0,3)--(6,3); 
\draw[-](0,4.5)--(6,4.5); 
\draw[-](0,6)--(6,6); 
\draw[-](1.5,0)--(1.5,6); 
\draw[-](3,0)--(3,6); 
\draw[-](4.5,0)--(4.5,6); 
\draw[-](6,0)--(6,6); 

\draw[-,dashed](0,0)--(0,-1.5); 
\draw[-,dashed](1.5,0)--(1.5,-1.5); 
\draw[-,dashed](3,0)--(3,-1.5); 
\draw[-,dashed](4.5,0)--(4.5,-1.5); 
\draw[-,dashed](6,0)--(6,-1.5); 
\draw[-,dashed](0,0)--(-1.5,0); 
\draw[-,dashed](0,1.5)--(-1.5,1.5); 
\draw[-,dashed](0,3)--(-1.5,3); 
\draw[-,dashed](0,4.5)--(-1.5,4.5); 
\draw[-,dashed](0,6)--(-1.5,6); 

\draw(7,-.4)node{$j_{\boldsymbol{x}}$};
\draw(-.4,7)node{$j_t$};

\draw[fill](0,0)circle(.07);
\draw[thick](0.2,0.2)node{$+$};
\draw[fill](1.5,0)circle(.07);
\draw[thick](1.3,0.2)node{$-$};
\draw[thick](1.7,0.2)node{$+$};
\draw[fill](3,0)circle(.07);
\draw[thick](2.8,0.2)node{$-$};
\draw[thick](3.2,0.2)node{$+$};
\draw[fill](4.5,0)circle(.07);
\draw[thick](4.3,0.2)node{$-$};
\draw[fill](6,0)circle(.07);

\draw[fill](0,1.5)circle(.07);
\draw[thick](0.2,1.3)node{$-$};
\draw[fill](1.5,1.5)circle(.07);
\draw[thick](1.3,1.3)node{$+$};
\draw[thick](1.7,1.7)node{$+$};
\draw[thick](1.3,1.7)node{$-$};
\draw[thick](1.7,1.3)node{$-$};
\draw[thick](0.2,1.7)node{$+$};
\draw[thick](2.8,1.7)node{$-$};
\draw[fill](3,1.5)circle(.07);
\draw[thick](2.8,1.3)node{$+$};
\draw[thick](3.2,1.3)node{$-$};
\draw[fill](4.5,1.5)circle(.07);
\draw[thick](4.3,1.3)node{$+$};

\draw[fill](0,3)circle(.07);
\draw[thick](0.2,2.8)node{$-$};
\draw[thick](0.2,3.2)node{$+$};
\draw[fill](1.5,3)circle(.07);
\draw[thick](1.3,2.8)node{$+$};
\draw[thick](1.7,2.8)node{$-$};
\draw[thick](2.8,2.8)node{$+$};
\draw[fill](3,3)circle(.07);

\draw[fill](0,4.5)circle(.07);
\draw[thick](1.3,4.3)node{$+$};
\draw[thick](1.3,3.2)node{$-$};
\draw[fill](1.5,4.5)circle(.07);
\draw[thick](0.2,4.3)node{$-$};

\draw[fill](0,6)circle(.07);

\draw[fill](6,6)circle(.07);
\draw[fill](6,4.5)circle(.07);
\draw[fill](6,3)circle(.07);
\draw[fill](6,1.5)circle(.07);
\draw[fill](4.5,6)circle(.07);
\draw[fill](4.5,4.5)circle(.07);
\draw[fill](4.5,3)circle(.07);
\draw[fill](3,6)circle(.07);
\draw[fill](3,4.5)circle(.07);
\draw[fill](1.5,6)circle(.07);

\draw[thick](5.8,-0.2)node{$+$};
\draw[thick](4.3,-0.2)node{$+$};
\draw[thick](4.7,-0.2)node{$-$};
\draw[thick](3.2,-0.2)node{$-$};
\draw[thick](2.8,-0.2)node{$+$};
\draw[thick](1.7,-0.2)node{$-$};
\draw[thick](1.3,-0.2)node{$+$};
\draw[thick](0.2,-0.2)node{$-$};
\draw[thick](-0.2,-0.2)node{$+$};
\draw[thick](-0.2,1.3)node{$+$};
\draw[thick](-0.2,0.2)node{$-$};
\draw[thick](-0.2,2.8)node{$+$};
\draw[thick](-0.2,1.7)node{$-$};
\draw[thick](-0.2,4.3)node{$+$};
\draw[thick](-0.2,4.7)node{$-$};
\draw[thick](-0.2,3.2)node{$-$};
\draw[thick](-0.2,5.8)node{$+$};

\draw[thick](1.3,4.7)node{$-$};
\draw[thick](1.3,5.8)node{$+$};
\draw[thick](0.2,4.7)node{$+$};
\draw[thick](0.2,5.8)node{$-$};
\draw[thick](1.7,4.7)node{$+$};
\draw[thick](1.7,5.8)node{$-$};
\draw[thick](2.8,4.7)node{$-$};
\draw[thick](2.8,5.8)node{$+$};
\draw[thick](1.7,4.3)node{$-$};
\draw[thick](1.7,3.2)node{$+$};
\draw[thick](2.8,4.3)node{$+$};
\draw[thick](2.8,3.2)node{$-$};

\draw[thick](3.2,5.8)node{$-$};
\draw[thick](3.2,4.7)node{$+$};
\draw[thick](3.2,4.3)node{$-$};
\draw[thick](3.2,3.2)node{$+$};
\draw[thick](3.2,2.8)node{$-$};
\draw[thick](3.2,1.7)node{$+$};

\draw[thick](4.3,5.8)node{$+$};
\draw[thick](4.3,4.7)node{$-$};
\draw[thick](4.3,4.3)node{$+$};
\draw[thick](4.3,3.2)node{$-$};
\draw[thick](4.3,2.8)node{$+$};
\draw[thick](4.3,1.7)node{$-$};

\draw[thick](4.7,5.8)node{$-$};
\draw[thick](4.7,4.7)node{$+$};
\draw[thick](4.7,4.3)node{$-$};
\draw[thick](4.7,3.2)node{$+$};
\draw[thick](4.7,2.8)node{$-$};
\draw[thick](4.7,1.7)node{$+$};
\draw[thick](4.7,1.3)node{$-$};
\draw[thick](4.7,0.2)node{$+$};

\draw[thick](5.8,5.8)node{$+$};
\draw[thick](5.8,4.7)node{$-$};
\draw[thick](5.8,4.3)node{$+$};
\draw[thick](5.8,3.2)node{$-$};
\draw[thick](5.8,2.8)node{$+$};
\draw[thick](5.8,1.7)node{$-$};
\draw[thick](5.8,1.3)node{$+$};
\draw[thick](5.8,0.2)node{$-$};

\draw[thick](0.75,0.75)node{$\Delta^P_{1,1}$};
\draw[thick](2.25,0.75)node{$\Delta^P_{2,1}$};
\draw[thick](3.75,0.75)node{$\Delta^P_{3,1}$};
\draw[thick](0.75,2.25)node{$\Delta^P_{2,1}$};
\draw[thick](0.75,3.75)node{$\Delta^P_{3,1}$};
\draw[thick](2.25,2.25)node{$\Delta^P_{2,2}$};
\draw[thick](0.75,-0.75)node{$\Delta^P_{1,0}$};
\draw[thick](2.25,-0.75)node{$\Delta^P_{2,0}$};
\draw[thick](3.75,-0.75)node{$\Delta^P_{3,0}$};
\draw[thick](5.25,-0.75)node{$\Delta^P_{4,0}$};
\draw[thick](-0.75,-0.75)node{$\Delta^P_{0,0}$};
\draw[thick](-0.75,0.75)node{$\Delta^P_{0,1}$};
\draw[thick](-0.75,2.25)node{$\Delta^P_{0,2}$};
\draw[thick](-0.75,3.75)node{$\Delta^P_{0,3}$};
\draw[thick](-0.75,5.25)node{$\Delta^P_{0,4}$};

\draw[thick](0.75,5.25)node{$\Delta^P_{4,1}$};
\draw[thick](2.25,5.25)node{$\Delta^P_{4,2}$};
\draw[thick](3.75,5.25)node{$\Delta^P_{4,3}$};
\draw[thick](5.25,5.25)node{$\Delta^P_{4,4}$};
\draw[thick](2.25,3.75)node{$\Delta^P_{3,2}$};
\draw[thick](3.75,3.75)node{$\Delta^P_{3,3}$};
\draw[thick](5.25,3.75)node{$\Delta^P_{4,3}$};
\draw[thick](3.75,2.25)node{$\Delta^P_{3,2}$};
\draw[thick](5.25,2.25)node{$\Delta^P_{4,2}$};
\draw[thick](5.25,0.75)node{$\Delta^P_{4,1}$};
\draw[ultra thick,fill,orange](6,6)circle(.1);

\end{tikzpicture}
\end{minipage}
\begin{minipage}{0.4\textwidth}
\centering
\begin{tikzpicture}[scale=.7]
\draw[->](0,0)--(7,0); 
\draw[->](0,0)--(0,7); 
\draw[-](0,1.5)--(4.5,1.5); 
\draw[-](0,3)--(3,3); 
\draw[-](0,4.5)--(1.5,4.5); 
\draw[-](1.5,0)--(1.5,4.5); 
\draw[-](3,0)--(3,3); 
\draw[-](4.5,0)--(4.5,1.5); 

\draw[-,dashed](0,0)--(0,-1.5); 
\draw[-,dashed](1.5,0)--(1.5,-1.5); 
\draw[-,dashed](3,0)--(3,-1.5); 
\draw[-,dashed](4.5,0)--(4.5,-1.5); 
\draw[-,dashed](6,0)--(6,-1.5); 
\draw[-,dashed](0,0)--(-1.5,0); 
\draw[-,dashed](0,1.5)--(-1.5,1.5); 
\draw[-,dashed](0,3)--(-1.5,3); 
\draw[-,dashed](0,4.5)--(-1.5,4.5); 
\draw[-,dashed](0,6)--(-1.5,6); 

\draw(7,-.4)node{$j_{\boldsymbol{x}}$};
\draw(-.4,7)node{$j_t$};

\draw[fill](0,0)circle(.07);
\draw[thick](0.2,0.2)node{$+$};
\draw[fill](1.5,0)circle(.07);
\draw[thick](1.3,0.2)node{$-$};
\draw[thick](1.7,0.2)node{$+$};
\draw[fill](3,0)circle(.07);
\draw[thick](2.8,0.2)node{$-$};
\draw[thick](3.2,0.2)node{$+$};
\draw[fill](4.5,0)circle(.07);
\draw[thick](4.3,0.2)node{$-$};
\draw[fill](6,0)circle(.07);

\draw[fill](0,1.5)circle(.07);
\draw[thick](0.2,1.3)node{$-$};
\draw[fill](1.5,1.5)circle(.07);
\draw[thick](1.3,1.3)node{$+$};
\draw[thick](1.7,1.7)node{$+$};
\draw[thick](1.3,1.7)node{$-$};
\draw[thick](1.7,1.3)node{$-$};
\draw[thick](0.2,1.7)node{$+$};
\draw[thick](2.8,1.7)node{$-$};
\draw[fill](3,1.5)circle(.07);
\draw[thick](2.8,1.3)node{$+$};
\draw[thick](3.2,1.3)node{$-$};
\draw[fill](4.5,1.5)circle(.07);
\draw[thick](4.3,1.3)node{$+$};

\draw[fill](0,3)circle(.07);
\draw[thick](0.2,2.8)node{$-$};
\draw[thick](0.2,3.2)node{$+$};
\draw[fill](1.5,3)circle(.07);
\draw[thick](1.3,2.8)node{$+$};
\draw[thick](1.7,2.8)node{$-$};
\draw[thick](2.8,2.8)node{$+$};
\draw[fill](3,3)circle(.07);

\draw[fill](0,4.5)circle(.07);
\draw[thick](1.3,4.3)node{$+$};
\draw[thick](1.3,3.2)node{$-$};
\draw[fill](1.5,4.5)circle(.07);
\draw[thick](0.2,4.3)node{$-$};

\draw[fill](0,6)circle(.07);

\draw[thick](5.8,-0.2)node{$+$};
\draw[thick](4.3,-0.2)node{$+$};
\draw[thick](4.7,-0.2)node{$-$};
\draw[thick](3.2,-0.2)node{$-$};
\draw[thick](2.8,-0.2)node{$+$};
\draw[thick](1.7,-0.2)node{$-$};
\draw[thick](1.3,-0.2)node{$+$};
\draw[thick](0.2,-0.2)node{$-$};
\draw[thick](-0.2,-0.2)node{$+$};
\draw[thick](-0.2,1.3)node{$+$};
\draw[thick](-0.2,0.2)node{$-$};
\draw[thick](-0.2,2.8)node{$+$};
\draw[thick](-0.2,1.7)node{$-$};
\draw[thick](-0.2,4.3)node{$+$};
\draw[thick](-0.2,4.7)node{$-$};
\draw[thick](-0.2,3.2)node{$-$};
\draw[thick](-0.2,5.8)node{$+$};

\draw[ultra thick,fill,orange](0,6)circle(.1);
\draw[ultra thick,fill,orange](1.5,4.5)circle(.1);
\draw[ultra thick,fill,orange](3,3)circle(.1);
\draw[ultra thick,fill,orange](4.5,1.5)circle(.1);
\draw[ultra thick,fill,orange](6,0)circle(.1);

\draw[ultra thick,fill,cyan](0,4.5)circle(.1);
\draw[ultra thick,fill,cyan](1.5,3)circle(.1);
\draw[ultra thick,fill,cyan](3,1.5)circle(.1);
\draw[ultra thick,fill,cyan](4.5,0)circle(.1);

\draw[thick](0.75,0.75)node{$\Delta^P_{1,1}$};
\draw[thick](2.25,0.75)node{$\Delta^P_{2,1}$};
\draw[thick](3.75,0.75)node{$\Delta^P_{3,1}$};
\draw[thick](0.75,2.25)node{$\Delta^P_{2,1}$};
\draw[thick](0.75,3.75)node{$\Delta^P_{3,1}$};
\draw[thick](2.25,2.25)node{$\Delta^P_{2,2}$};
\draw[thick](0.75,-0.75)node{$\Delta^P_{1,0}$};
\draw[thick](2.25,-0.75)node{$\Delta^P_{2,0}$};
\draw[thick](3.75,-0.75)node{$\Delta^P_{3,0}$};
\draw[thick](5.25,-0.75)node{$\Delta^P_{4,0}$};
\draw[thick](-0.75,-0.75)node{$\Delta^P_{0,0}$};
\draw[thick](-0.75,0.75)node{$\Delta^P_{0,1}$};
\draw[thick](-0.75,2.25)node{$\Delta^P_{0,2}$};
\draw[thick](-0.75,3.75)node{$\Delta^P_{0,3}$};
\draw[thick](-0.75,5.25)node{$\Delta^P_{0,4}$};
\end{tikzpicture}
\end{minipage}
\caption{Graphical representations of the full-grid approximation $P_{4,4} u=\sum_{j_{\bx},j_t \le 4} \Delta_{\jx,\jt}^P u$ (left) and the sparse-grid approximation $u_4^{CF} =\sum_{j_{\bx}+j_t \le 4} \Delta_{\jx,\jt}^P u$ (right).}
\label{fig:hierarchical}
\end{figure}

\subsection{Complexity analysis: DoFs count}\label{sec:complexity}

We compare the computational complexity of full- and sparse-grid discretizations in terms of the total number of DoFs. We measure the complexity of the sparse-grid version of the method by summing the dimensions of the~$2J+1$ linear systems in \eqref{eq:19}:
$$
N_{\mathrm{DoFs}}^{\mathrm{sparse}}:=\sum_{j_{\bx}=0}^{J} \mathrm{dim}\big(V_{\jx}^{\bx}(\Omega) \otimes V^t_{J-\jx}(0,T)\big) + \sum_{j_{\bx}=1}^{J} \mathrm{dim}\big(V_{\jx-1}^{\bx}(\Omega) \otimes V^t_{J-\jx}(0,T)\big);
$$
see also Remark~\ref{rem:DoFCount}. We consider separately the cases of space dimension $d=1$ and $d=2,3$.

\paragraph{Case $d=1$.} Let ~$\Omega \subset \mathbb{R}$ be a one-dimensional spatial  domain, and let~$J \in \mathbb{N}$ be the finest refinement level. For $\jx, \jt\le J$, we choose $\Vx \subset H^1_0(\Omega)$ and $\Vt \subset H^2_{0,\bullet}(0,T)$ as the spaces generated by B-splines on uniform meshes with polynomial degrees $p_{\bx}$ and $p_t$ and regularities 
$0\le r_{\bx}\le p_{\bx}-1$ and $1\le r_t\le p_t-1$ in space and time, respectively. This choice ensures conformity of the approximation, namely $C^0$ continuity in space and $C^1$ continuity in time. For a given refinement level $(j_{\bx}, j_t)$, the dimension of the tensor-product space $\Vx \otimes\Vt$ is
\begin{equation*}
    \mathrm{dim}(\Vx \otimes\Vt)=(N_{\jx}(p_{\bx}-r_{\bx})+r_{\bx}-1)(N_{\jt}(p_t-r_t)+r_t),
\end{equation*}
where $N_{\jx}\approx 2^{\jx}$ and $N_{\jt}\approx 2^{\jt}$ are the number of spatial and temporal intervals, respectively. The number of DoFs of the full space--time approximation at the finest level $J$ is
$$
    N_{\mathrm{DoFs}}^{\mathrm{full}}:=\mathrm{dim}(V_{J}^{\bx}(\Omega) \otimes V_J^t(0,T))\approx 2^{2J},
$$
while the total number of DoFs for the sparse-grid space--time approximation satisfies
\begin{align*}
    N_{\mathrm{DoFs}}^{\mathrm{sparse}} & \approx\sum_{j_{\bx}=0}^{J} 2^{j_{\bx}+(J-j_{\bx})}+ \sum_{j_{\bx}=1}^{J} 2^{j_{\bx}-1+(J-j_{\bx})} =2^J (J+1+J 2^{-1}) \approx 2^J J.
\end{align*}
\paragraph{Case $d=2,3$.}
In the case of spatial dimension $d\in\{2,3\}$ we consider spatial discretizations using~$C^0$ finite elements on sequences of shape-regular, simplicial nested meshes obtained by successive mesh halving. Namely, for~$\jx\le J$, $\Vx \subset H^1(\Omega)$ is the space of continuous, piecewise polynomials of degree $p_{\bx}$ defined on the mesh at level~$\jx$. The dimension of~$\Vx$ scales like~$\mathcal{O}(2^{d\jx})$. In time, as above, we consider the spaces generated by B-splines on uniform meshes with degree $p_t$ and regularity $1\le r_t\le p_t-1$,
whose dimension scales like $\mathcal{O}(2^{\jt})$. For the full-grid approximation with fine level $J$, we have
\[
    N_{\mathrm{DoFs}}^{\mathrm{full}} \approx  2^{dJ+J}=2^{(d+1)J},
\]
while for the sparse-grid approximation, we obtain 
\begin{align*}
    N_{\mathrm{DoFs}}^{\mathrm{sparse}} &\approx \sum_{j_{\bx}=0}^{J} 2^{d j_{\bx}+(J-j_{\bx})}+ \sum_{j_{\bx}=1}^{J} 2^{d(j_{\bx}-1)+(J-j_{\bx})} = \frac{(1-2^{(d-1)(J+1)})}{1-2^{d-1}}(2^J+2^{J-d})-2^{J-d} \approx 2^{dJ}.
\end{align*}
Hence, the sparse-grid discretization requires significantly fewer degrees of freedom than the full-grid scheme.
\begin{remark}[DoFs count and solution cost]\label{rem:DoFCount} The full-grid scheme leads to a single algebraic linear system of size $\mathcal{O}(2^{(d+1)J})$, whereas the sparse-grid discretization results in $2J+1$ smaller linear systems of different sizes. The total number of degrees of freedom $N_{\mathrm{DoFs}}^{\mathrm{sparse}}=\mathcal O(2^{dJ})$ computed above corresponds to the sum of their dimensions. Thus, if the cost of each linear system solution grows more than linearly in its dimension, the combination formula \eqref{eq:19} is advantageous against a full-grid scheme even for the same number of DoFs. Additionally, the combination technique naturally allows for parallel implementation: the individual (small) systems can be solved independently, further enhancing the efficiency of the method compared to the full-grid approach. 
\eremk
\end{remark}

\subsection{Convergence}\label{sec:convergenceSG}

We present the main result of this paper, namely convergence estimates for the combination formula in the~$L^2$ norm. The proof is deferred to Section~\ref{sec:proof}, while the auxiliary results on space and time semidiscretizations required for the proof are collected in Section~\ref{sec:semidiscr}.
\begin{theorem} \label{th:1}
Let $p_\bx,p_t\in\mathbb N$.
Assume that the problem domain and coefficient satisfy
\begin{equation} \label{eq:22}
    \partial \Omega \in C^{p_{\bx},1}, \quad c \in H^{p_{\bx}}(\Omega) \cap C^{p_{\bx}-1,1}(\overline{\Omega}).
\end{equation}
Let~$u$ be the solution of problem~\eqref{eq:7} and suppose that it satisfies the mixed regularity assumption
\begin{equation} \label{eq:23}
    u \in \big(H_0^{p_{\bx}+4}(\Omega)\otimes  H^{p_t+3}(0,T)\big)\cap \big(H^{p_{\bx}+1}(\Omega)\otimes  H_{0,\bullet}^{p_t+6}(0,T)\big).
\end{equation}
Assume that the families of approximation spaces $\{\Vx\}_{j_{\bx}}$ and $\{\Vt\}_{j_t}$ satisfy~\eqref{eq:13} and~\eqref{eq:14}, respectively, for approximation degrees $p_{\bx}, p_t \in \mathbb N$.
Assume, in addition, that the following \vanishingconditions at the initial time and on the boundary hold:
\begin{equation} \label{eq:24}
    \partial_t^{\ell_t} ((\Id^{\bx}-\Pi^{\nabla_{\bx}}_{j_{\bx}})\otimes \Id^t) u(\bx,0) =0 \quad \text{for a.e.~} \bx \in \Omega \quad \text{for} \quad \ell_t=1,\dots,p_t+3,
\end{equation}
and 
\begin{equation} \label{eq:25}
    \mathcal{L}^{\ell_{\bx}}(\Id^{\bx} \otimes (\Id^t - \Pi_{j_t}^{\partial_t^2}))u (\bx,t)=0 \quad \text{for a.e.~} (\bx,t) \in \partial \Omega \times (0,T) \quad \text{~for~} \ell_{\bx} \in \mathbb{N} \text{~such that~} 1 \le \ell_{\bx}< (p_{\bx}+4)/2,
\end{equation}
where $\mathcal{L} = - \nabla_{\bx} \cdot (c^2 \nabla_{\bx} \cdot)$, and $\mathcal{L}^{\ell_{\bx}}$ denotes the $\ell_{\bx}$-fold application of $\mathcal{L}$.

\noindent
Then, for~$u_J^{CF}$ obtained with the combination formula \eqref{eq:19}, the following error estimates hold:
\begin{align*}
    \| u - u^{CF}_J \|_{L^2(Q_T)} \lesssim  \big(\|u\|_{H^{p_{\bx}+4}(\Omega)\otimes H^{p_t+3}(0,T)}+\|u\|_{H^{p_{\bx}+1}(\Omega)\otimes H^{p_t+6}(0,T)}\big) \begin{cases}
     2^{-J\min\{p_{\bx}+1,p_t+1\}}, &\text{if } p_{\bx}\neq p_t, \\
    2^{-J(p_t+1)} J,  &\text{if } p_{\bx} =  p_t.
    \end{cases}
\end{align*}
\end{theorem}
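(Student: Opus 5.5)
Following \cite{griebel2014convergence}, we split the error as
\[
u - u_J^{CF} = (u - P_{J,J}u) + (P_{J,J}u - u_J^{CF}).
\]
The first term is controlled by \eqref{eq:18}, since \eqref{eq:23} implies \eqref{eq:15}. For the second term we use \eqref{eq:21bis}, which reduces the task to bounding the space--time details $\|\Delta^P_{j_{\bx},j_t}u\|_{L^2(Q_T)}$ for $j_{\bx}+j_t>J$. The target is a mixed estimate of the form
\begin{equation*}
\|\Delta^P_{j_{\bx},j_t} u\|_{L^2(Q_T)} \lesssim 2^{-j_{\bx}(p_{\bx}+1)}\,2^{-j_t(p_t+1)}\big(\|u\|_{H^{p_{\bx}+4}(\Omega)\otimes H^{p_t+3}(0,T)}+\|u\|_{H^{p_{\bx}+1}(\Omega)\otimes H^{p_t+6}(0,T)}\big).
\end{equation*}
Granting this, we sum over $j_{\bx},j_t\le J$ with $j_{\bx}+j_t>J$. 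Writing $j_t=J-j_{\bx}+k$ with $k\ge1$, the sum becomes $\sum_{j_{\bx}}2^{-j_{\bx}(p_{\bx}+1)-(J-j_{\bx})(p_t+1)}$ times a geometric factor. This yields $2^{-J\min\{p_{\bx}+1,p_t+1\}}$ when $p_{\bx}\ne p_t$, and $J\,2^{-J(p_t+1)}$ when $p_{\bx}=p_t$, exactly as in the theorem.

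To obtain the detail bound, we introduce semidiscrete Galerkin operators $P_{j_{\bx},\infty}$ (discrete in space, continuous in time) and $P_{\infty,j_t}$ (continuous in space, discrete in time), defined as in \eqref{eq:10} on $V^{\bx}_{j_{\bx}}(\Omega)\otimes H^2_{0,\bullet}(0,T)$ and $H_0^1(\Omega)\otimes V^t_{j_t}(0,T)$. The key structural fact we aim to prove is the tensor-product factorization $P_{j_{\bx},j_t}=P_{j_{\bx},\infty}P_{\infty,j_t}=P_{\infty,j_t}P_{j_{\bx},\infty}$ on sufficiently regular functions. The reason this should hold is that $\A$ is a sum of a time-only operator tensored with the $L^2$ product in space and the $c^2$-gradient form in space tensored with a time form. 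One can then check directly that the composition satisfies the Galerkin orthogonality \eqref{eq:10}, and uniqueness follows from coercivity \eqref{eq:8}. With the factorization in hand, we write
\[
\Delta^P_{j_{\bx},j_t} = (P_{j_{\bx},\infty}-P_{j_{\bx}-1,\infty})(P_{\infty,j_t}-P_{\infty,j_t-1}),
\]
and it suffices to bound $\|(P_{j_{\bx},\infty}-\Id)(P_{\infty,j_t}-\Id)u\|_{L^2(Q_T)}$, together with the analogous terms at levels $j_{\bx}-1$ and $j_t-1$.

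For these products we apply semidiscrete analogues of \eqref{eq:17}. In space, we bound $(P_{j_{\bx},\infty}-\Id)w$ in the $L^2$ norm by $2^{-j_{\bx}(p_{\bx}+1)}$ times Sobolev norms of $\partial_t^2 w$, $v_0$-type traces, and $w$ itself, now applied to $w=(P_{\infty,j_t}-\Id)u$. We then need $(P_{\infty,j_t}-\Id)u$, and its time derivatives, to inherit spatial regularity $H^{p_{\bx}+1}$ with a factor $2^{-j_t(p_t+1)}$. In time, the corresponding estimate produces $\Pi^{\partial_t^2}_{j_t}$ errors of $\mathcal{L}u$ and of its derivatives. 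This accounts for the extra three orders in space and time in \eqref{eq:23}: two orders come from $\mathcal L$ or $\partial_t^2$ in \eqref{eq:17}, and one from the Aubin--Nitsche/Poincaré step.

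We expect the main obstacle to be these commuting regularity statements. The semidiscrete error $(P_{\infty,j_t}-\Id)u$ must lie in $H^{p_{\bx}+1}(\Omega)$, which requires applying powers of $\mathcal L$, using elliptic regularity from \eqref{eq:22}, and keeping the boundary traces under control. Symmetrically, time derivatives of $(P_{j_{\bx},\infty}-\Id)u$ must be controlled up to order $p_t+3$ at $t=0$. This is exactly where the compatibility conditions enter: \eqref{eq:25} ensures that $\mathcal{L}^{\ell_{\bx}}$ of the temporal projection error keeps zero Dirichlet trace, so elliptic shift theorems apply iteratively. Likewise, \eqref{eq:24} ensures that the initial-value terms, namely the penalty at $t=0$ and the traces $\partial_t^{\ell_t}$ of the spatial projection error, vanish, so higher time-regularity can be bootstrapped from the semidiscrete equation. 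We would first try to prove the bootstrap in the form ``$\mathcal{L}^{\ell}$ commutes with $P_{\infty,j_t}$'' and ``$\partial_t^{\ell}$ commutes with $P_{j_{\bx},\infty}$'' under these conditions. If that fails, we would fall back on proving energy estimates for the differentiated semidiscrete problems.
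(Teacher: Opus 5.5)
There is a genuine gap at the step you call the key structural fact. The factorization $P_{j_{\bx},j_t}=P_{j_{\bx},\infty}P_{\infty,j_t}=P_{\infty,j_t}P_{j_{\bx},\infty}$ is false in general, and so is the product formula for $\Delta^P_{j_{\bx},j_t}$ that you derive from it. The composition does satisfy Galerkin orthogonality against $V^{\bx}_{j_{\bx}}(\Omega)\otimes V^t_{j_t}(0,T)$. However, it does not lie in that space, so uniqueness cannot be invoked.

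The reason is the structure of $\A$. It pairs the $L^2(\Omega)$ product with one temporal form and the $c^2$-energy product with another. For finite element spaces the $L^2$ and Ritz projections onto $V^{\bx}_{j_{\bx}}(\Omega)$ differ; they coincide only if that space is spanned by eigenfunctions of $\mathcal{L}$. Hence $P_{j_{\bx},\infty}$ is not of the form $\Pi\otimes\Id^t$, and it does not preserve $H^1_0(\Omega)\otimes V^t_{j_t}(0,T)$.

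Concretely, take $w\in H^1_0(\Omega)\otimes V^t_{j_t}(0,T)$ and the basis $\{\psi^j_{j_{\bx}}\}_j$ from the proof of Proposition~\ref{prop:1}. The $j$-th coefficient of $P_{j_{\bx},\infty}w$ is $\alpha_j+e_j$, where $\alpha_j(t):=(w(\cdot,t),\psi^j_{j_{\bx}})_{L^2(\Omega)}$ is a spline and
\begin{equation*}
e_j''+\mu^j_{j_{\bx}}e_j=\mu^j_{j_{\bx}}\big((\Pi^{\nabla_{\bx}}_{j_{\bx}}-\Pi^{L^2_{\bx}}_{j_{\bx}})w(\cdot,t),\psi^j_{j_{\bx}}\big)_{L^2(\Omega)},\qquad e_j(0)=e_j'(0)=0.
\end{equation*}
The right-hand side is in general a nonzero spline. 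Then $e_j$ picks up $\cos(\sqrt{\mu^j_{j_{\bx}}}\,t)$ and $\sin(\sqrt{\mu^j_{j_{\bx}}}\,t)$ components and is not in $V^t_{j_t}(0,T)$. Symmetrically, $P_{\infty,j_t}$ solves a coupled elliptic system in $H^1_0(\Omega)$ whose solution leaves $V^{\bx}_{j_{\bx}}(\Omega)$. Only the one-sided identities $P_{j_{\bx},j_t}P_{\infty,j_t}=P_{j_{\bx},j_t}=P_{j_{\bx},j_t}P_{j_{\bx},\infty}$ hold, by nestedness. This lack of factorization is exactly what a combination-technique analysis has to overcome.

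The missing idea, which the paper takes from \cite{griebel2014convergence}, is to compare $\Delta^P_{j_{\bx},j_t}u$ with the tensor-product detail $\Delta^Q_{j_{\bx},j_t}u=\big((\Pi^{\nabla_{\bx}}_{j_{\bx}}-\Pi^{\nabla_{\bx}}_{j_{\bx}-1})\otimes(\Pi^{\partial_t^2}_{j_t}-\Pi^{\partial_t^2}_{j_t-1})\big)u$. This detail does factor and has the mixed rate directly from \eqref{eq:13}--\eqref{eq:14}.

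For the difference, one uses $P_{j_{\bx},j_t}(\Pi^{\nabla_{\bx}}_{j_{\bx}}\otimes\Pi^{\partial_t^2}_{j_t})=\Pi^{\nabla_{\bx}}_{j_{\bx}}\otimes\Pi^{\partial_t^2}_{j_t}$ and the splitting
\begin{equation*}
\Id-\Pi^{\nabla_{\bx}}_{j_{\bx}}\otimes\Pi^{\partial_t^2}_{j_t}=\Id^{\bx}\otimes(\Id^t-\Pi^{\partial_t^2}_{j_t})+(\Id^{\bx}-\Pi^{\nabla_{\bx}}_{j_{\bx}})\otimes\Id^t-(\Id^{\bx}-\Pi^{\nabla_{\bx}}_{j_{\bx}})\otimes(\Id^t-\Pi^{\partial_t^2}_{j_t}).
\end{equation*}
The doubly projected pieces are annihilated by $P_{j_{\bx},j_t}$. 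This is because each tensor summand of $\A$ sees either the orthogonality defining $\Pi^{\partial_t^2}_{j_t}$ or the one defining $\Pi^{\nabla_{\bx}}_{j_{\bx}}$. What remains are single-direction Galerkin increments of one-directional projection errors, such as $(P_{j_{\bx},j_t}-P_{j_{\bx}-1,j_t})z$ with $z=(\Id^{\bx}\otimes(\Id^t-\Pi^{\partial_t^2}_{j_t}))u$.

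Here $z$ already carries the factor $2^{-j_t(p_t+1)}$. The factor $2^{-j_{\bx}(p_{\bx}+1)}$ comes from two ingredients:
\begin{itemize}
\item a coercivity argument that uses $P_{\infty,j_t}$ only through the valid one-sided identity;
\item higher-order stability of $P_{\infty,j_t}$ on such projection errors, and symmetrically of $P_{j_{\bx},\infty}$.
\end{itemize}
That stability step is where the compatibility conditions enter. Condition \eqref{eq:25} gives equivalence of the spectral $\mathcal{H}^{s}$ norm with the $H^{s}$ norm. Condition \eqref{eq:24} makes the initial-time terms vanish through \eqref{eq:47}.

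Your final paragraph is aimed at roughly this step, and your error splitting and final summation match the paper. But without the $\Delta^Q$ comparison there is no valid decomposition of $\Delta^P_{j_{\bx},j_t}u$ for those regularity results to feed into.
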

\begin{remark}[Mixed regularity assumption] \label{rem:4} The mixed regularity assumption in \eqref{eq:23} is satisfied whenever the data $f, u_0$, and $v_0$ are sufficiently smooth. In fact, for a fixed~$m\in {\mathbb N}$, \cite[\S 7.2.3, Theorem 6, P. 412]{evans1998partial} gives the following result. Assume $\partial \Omega$ smooth and
\begin{equation*}
    \begin{cases}
    u_0\in H^{m+1}(\Omega), \quad v_0\in H^{m}(\Omega),
    \\ f\in H^{m-k}(\Omega) \otimes H^k(0,T) \quad \text{for all~} k=0,\dots,m.
    \end{cases}
\end{equation*}
Suppose also that the following $m^{th}$-order compatibility conditions hold:
\begin{equation*}
    \begin{cases}
    g_0:=u_0\in H^1_0(\Omega), \quad g_1:=v_0\in H^1_0(\Omega),
    \\ g_\ell := \partial_t^{\ell-2}f(\cdot,0)-\mathcal{L} g_{\ell-2} \in H^{1}_0(\Omega) \quad \ell = 2,\ldots,m.
    \end{cases}
\end{equation*}
Then, the solution~$u$ of~\eqref{eq:7} satisfies~$u\in H^{m+1-k}(\Omega)\otimes H^k(0,T)$ for all $k=0,\dots, m+1$. Therefore, if the above conditions are satisfied with~$m=p_{\bx}+p_t+6$, we have $u \in H^{p_{\bx}+4}(\Omega) \otimes H^{p_t+3}(0,T)$ and $u\in H^{p_{\bx}+1}(\Omega) \otimes H^{p_t+6}(0,T)$.

\noindent 
The impact of mixed regularity on the convergence rates of the combination formula for elliptic problems is discussed in~\cite[Table 1]{BeckSangalliTamellini2018}.
\eremk
\end{remark}
\begin{remark}[\Vanishingconditions \eqref{eq:24} and~\eqref{eq:25}]
Table~\ref{tab:vanishing} shows the number of \vanishingconditions in~\eqref{eq:24} and~\eqref{eq:25} for a few representative $(p_{\bx},p_t)$ cases. These conditions require that the projection errors satisfy suitable vanishing constraints at the initial time and on the boundary. This allows the elimination of boundary and initial contributions in the error analysis and ensures optimal convergence rates. Using the commutativity of $\Pi^{\nabla_{\bx}}_{\jx}\otimes \Id^t$ with $\partial_t^{\ell_t}$ and of $\Id^{\bx}\otimes \Pi^{\partial_t^2}_{\jt}$ with $\mathcal{L}^{\ell_x}$, the \vanishingconditions \eqref{eq:24} and~\eqref{eq:25} can be equivalently rewritten as
\begin{align}\label{eq:VanishCondInV}
\begin{aligned}
    \partial_t^{\ell_t} u(\cdot,0) &\in \Vx \quad &&\text{for} \quad \ell_t=1,\dots,p_t+3,\\
    \mathcal{L}^{\ell_{\bx}} u (\bx,\cdot)&\in \Vt \quad \text{for a.e.~} \bx \in \partial \Omega &&\text{for} \quad \ell_{\bx} \in \mathbb{N} \text{~such that~} 1 \le \ell_{\bx}< (p_{\bx}+4)/2,
\end{aligned}
\end{align}
respectively. We now write explicitly the functions $\partial_t^{\ell_t} u$ and $\mathcal{L}^{\ell_{\bx}} u$ to gives a better understanding of these conditions. Since $\mathcal{L}$ and $\partial_t$ commute and $u$ satisfies $\partial_t^2 u+\mathcal{L}u=f$,
higher-order time derivatives of $u$ are obtained by iteratively differentiating the wave equation with respect to time:
\begin{align*}
    \partial_t^{\ell_t} u =
    \begin{cases}
        \partial_t u& \ell_t=1,
        \\ f-\mathcal{L}u & \ell_t=2,
        \\ \partial_t f-\mathcal{L}\partial_t u & \ell_t=3,
        \\ (\partial_t^2 -\mathcal{L})f+\mathcal{L}^2u & \ell_t=4,
        \\ \dots
    \end{cases}
    \hspace{1cm}
    \partial_t^{\ell_t}u(\cdot,0)=
    \begin{cases}
        v_0 &\ell_t=1,
        \\ f(\cdot,0)-\mathcal{L}u_0 & \ell_t=2,
        \\ \partial_t f(\cdot,0)-\mathcal{L}v_0 & \ell_t=3,
        \\ (\partial_t^2 -\mathcal{L})f(\cdot,0)+\mathcal{L}^2 u_0 & \ell_t=4,
        \\ \dots
    \end{cases}
\end{align*}
Applying $\ell_{\bx}$ times the operator $\mathcal{L}$ to the wave equation $\partial_t^2 u+\mathcal{L}u=f$ and using the homogeneous Dirichlet boundary conditions, we obtain, for a.e. $\bx \in \partial \Omega$
\begin{align*}
    \mathcal{L}^{\ell_{\bx}}u =
    \begin{cases}
        f-\partial_t^2 u & \ell_{\bx}=1,
        \\ (\mathcal{L} -\partial_t^2 )f +\partial_t^4 u & \ell_{\bx}=2,
        \\ (\mathcal{L}^2-\partial_t^2 \mathcal{L}+\partial_t^4) f -\partial_t^6 u & \ell_{\bx}=3,
        \\ \dots
    \end{cases}
    \hspace{1cm}
    \mathcal{L}^{\ell_{\bx}}u(\bx,\cdot)=
    \begin{cases}
        f(\bx,\cdot) & \ell_{\bx}=1,
        \\ (\mathcal{L} -\partial_t^2 )f(\bx,\cdot) & \ell_{\bx}=2,
        \\ (\mathcal{L}^2-\partial_t^2 \mathcal{L}+\partial_t^4) f(\bx,\cdot)& \ell_{\bx}=3,
        \\ \dots
    \end{cases}
\end{align*}
Comparing with \eqref{eq:VanishCondInV}, these identities show that the \vanishingconditions \eqref{eq:24}--\eqref{eq:25} can be stated in terms of the initial--boundary value problem data: some combinations of the partial derivatives of $f,u_0,v_0$ at initial time and on the space boundary must belong to the discrete spaces $\Vx$ and $\Vt$. Conditions \eqref{eq:24}--\eqref{eq:25} are implied by the following stronger assumptions:
\begin{align*}
\begin{cases}
   v_0(\bx)=0 & \text{for~} \ell_t=1,
   \\ f(\bx,0)-\mathcal{L} u_0(\bx)= 0 & \text{for~} \ell_t=2,
   \\ \partial^{\ell_t-2}_t f(\bx,0)=0  & \text{for~}  \ell_t=3,\dots,p_t+1
\end{cases}
\end{align*}
for a.e. $\bx \in \Omega$, and
\begin{align*}
    \mathcal{L}^{\ell_{\bx}-1} f (\bx,t)=0 \quad \text{for a.e.~} (\bx,t) \in \partial \Omega \times (0,T) \quad \text{~for~} \ell_{\bx} \in \mathbb{N} \text{~such that~} 1\le \ell_{\bx}< (p_{\bx}+4)/2.
\end{align*}
\Vanishingconditions cannot be completely avoided while preserving optimal convergence rates, as illustrated by the numerical experiment in \S\ref{sec:example4} below.
\begin{table}
\centering 
\begin{tabular}{c|cccc}
\diagbox[width=3em,height=2em]{$p_{\bx}$}{$p_t$}
& 2 & 3 & 4 & 5 \\ \hline
1 & $(5,2)$ & $(6,2)$ & $(7,2)$ & $(8,2)$ \\
2 & $(5,2)$ & $(6,2)$ & $(7,2)$ & $(8,2)$ \\
3 & $(5,3)$ & $(6,3)$ & $(7,3)$ & $(8,3)$ \\
4 & $(5,3)$ & $(6,3)$ & $(7,3)$ & $(8,3)$ \\
5 & $(5,4)$ & $(6,4)$ & $(7,4)$ & $(8,4)$
\end{tabular}
\caption{Number of \vanishingconditions \eqref{eq:24} and~\eqref{eq:25} shown as $(N_t,N_{\bx})$, where~$N_t$ denotes the number of initial-time derivative conditions in~\eqref{eq:24} and~$N_{\bx}$ the number of boundary conditions from powers of~$\mathcal{L}$ in~\eqref{eq:25}, as a function of $p_t$ (columns) and $p_{\bx}$ (rows).} 
\label{tab:vanishing}
\end{table}
\eremk
\end{remark}

\noindent
For $p=p_{\bx}=p_t$, the convergence rates in the $L^2(Q_T)$ norm for the full- and sparse-grid approximations are, respectively,
\begin{equation*}
\begin{aligned}
    \|u -P_{J,J} u \|_{L^2(Q_T)} & \approx 2^{-J(p+1)}, \qquad 
    \|u -u_J^{CF} \|_{L^2(Q_T)} & \approx 2^{-J(p+1)} J;
\end{aligned}
\end{equation*}
see~\eqref{eq:18} and Theorem~\ref{th:1}. In terms of the number of degrees of freedom, these rates are expressed as follows.
\paragraph{Case $d=1$.}
Recalling that~$N_{\mathrm{DoFs}}^{\mathrm{full}} \approx 2^{2J}$ and $N_{\mathrm{DoFs}}^{\mathrm{sparse}} \approx 2^{J} J$, we obtain
\begin{equation} \label{eq:26a}
\begin{aligned}
    \|u -P_{J,J} u \|_{L^2(Q_T)} & \approx (2^{2J})^{-\frac{p+1}{2}} \approx  (N_{\mathrm{DoFs}}^{\mathrm{full}})^{-\frac{p+1}{2}},
    \\ \|u -u_J^{CF} \|_{L^2(Q_T)} & \approx (2^J J)^{-(p+1)}J^{p+2} \approx (N_{\mathrm{DoFs}}^{\mathrm{sparse}} )^{-(p+1)}(\log_2 (N_{\mathrm{DoFs}}^{\mathrm{sparse}} )^{p+2}),
\end{aligned}
\end{equation}
where, in the second estimate, we used that $\log_2(N_{\mathrm{DoFs}}^{\mathrm{sparse}})  \approx J+ \log_2 J \approx J$.

\paragraph{Case $d=2,3$.} Recalling that~$N_{\mathrm{DoFs}}^{\mathrm{full}} \approx 2^{(d+1)J}$ and $N_{\mathrm{DoFs}}^{\mathrm{sparse}} \approx 2^{dJ}$, we obtain
\begin{equation} \label{eq:27a}
\begin{aligned}
    \|u -P_{J,J} u\|_{L^2(Q_T)} & \approx (2^{(d+1)J})^{-\frac{p+1}{d+1}} \approx (N^{\mathrm{full}}_{\mathrm{DoFs}})^{-\frac{p+1}{d+1}},
    \\ \|u -u_J^{CF}\|_{L^2(Q_T)} & \approx \left(2^{dJ} \right)^{-\frac{p+1}{d}}J \approx (N^{\mathrm{sparse}}_{\mathrm{DoFs}})^{-\frac{p+1}{d}}(\log_2 (N^{\mathrm{sparse}}_{\mathrm{DoFs}})),
\end{aligned}
\end{equation}
where, in the second estimate, we used that~$\log_2(N^{\mathrm{sparse}}_{\mathrm{DoFs}}) \approx dJ$.

\medskip
\noindent 
Asymptotically for $N_{\mathrm{DoFs}}^{\mathrm{sparse}}\to \infty$ and $N_{\mathrm{DoFs}}^{\mathrm{full}} \to \infty$, for all $d=1,2,3$, we deduce  
\begin{equation*}
    \| u - P_{J,J} u \|_{L^2(Q_T)} \approx (N^{\mathrm{full}}_{\mathrm{DoFs}})^{-\frac{p+1}{d+1}}, \quad\quad\quad  \| u - u_J^{CF} \|_{L^2(Q_T)} \approx (N^{\mathrm{sparse}}_{\mathrm{DoFs}})^{-\frac{p+1}{d}}.
\end{equation*}
These estimates highlight the advantage in terms of number of DoFs of the use of the combination formula.

\section{Semidiscretization operators and their properties}\label{sec:semidiscr}

To study the convergence of the approximation~$u_J^{CF}$ obtained with the combination formula \eqref{eq:19} to the solution $u$ of \eqref{eq:7}, it is essential to first establish the properties of the semidiscretizations $\Pinfjt$ and $\Pjxinf$. These are defined as the solutions to the following semidiscrete problems: given $u \in \mathcal{V}(Q_T)$, find $\Pinfjt \in H^1_0(\Omega) \otimes \Vt$ such that
\begin{equation} \label{eq:26}
    \mathcal{A}(\Pinfjt, w_{\infty,j_t}) = 
    \mathcal{A}(u, w_{\infty,j_t}) \quad \text{for all } w_{\infty,j_t} \in H^1_0(\Omega) \otimes \Vt,
\end{equation}
and find $\Pjxinf \in \Vx \otimes H_{0,\bullet}^2(0,T)$ such that
\begin{equation} \label{eq:27} 
    \mathcal{A}(\Pjxinf, w_{j_{\bx},\infty}) = \mathcal{A}(u, w_{j_{\bx},\infty}) \quad \text{for all } w_{j_{\bx},\infty} \in V_{j_{\bx}}^{\bx}(\Omega) \otimes H_{0,\bullet}^2(0,T).
\end{equation}
This also defines the corresponding semidiscretization operators~$\Pinfjt: \V(Q_T)\to H^1_0(\Omega) \otimes \Vt$ and~$\Pjxinf:\V(Q_T)\to \Vx \otimes H_{0,\bullet}^2(0,T)$, respectively.

\noindent
In the remainder of this section, we show that~$\Pinfjt$ and~$\Pjxinf$ are well defined (Section~\ref{sec:welldef}), and we establish 
stability results under high regularity assumptions (Section~\ref{sec:stab}). We conclude with convergence results for differences of consecutive Galerkin projections (Section~\ref{sec:gal}).

\subsection{Well-definedness and regularity}\label{sec:welldef}
We first establish the well-definedness and regularity-preserving properties of the semidiscretizations~$\Pinfjt$ and~$\Pjxinf$.
\begin{proposition} \label{prop:1}
For any $u \in \V(Q_T)$, the semidiscretizations $\Pinfjt$ and $\Pjxinf$ are well-defined for all $\jt, \jx \in \mathbb{N}$. Furthermore, assuming $s_{\bx} \ge 1$ and $s_t \ge 2$, and
\begin{equation} \label{eq:28}
    \partial \Omega \in C^{s_{\bx}-1,1}, \quad c \in H^{s_{\bx}-1}(\Omega) \cap C^{s_{\bx}-2,1}(\overline{\Omega})
\end{equation}
the following regularity properties hold:
\begin{itemize}[topsep=4pt,itemsep=0pt]
\item[i)] if $u \in \big(H_0^{s_{\bx}}(\Omega) \otimes H^1(0,T)\big)\cap\big(H^{s_{\bx}-2}(\Omega) \otimes H_{0,\bullet}^{2}(0,T)\big)$, then $\Pinfjt \in H_0^{s_{\bx}}(\Omega) \otimes \Vt$; 
\item[ii)] if $u \in \big(H_0^1(\Omega) \otimes H^{s_t-2}(0,T)\big) \cap \big(L^2(\Omega) \otimes H_{0,\bullet}^{s_t}(0,T)\big)$, then $\Pjxinf \in \Vx \otimes H_{0,\bullet}^{s_t}(0,T)$.
\end{itemize}
\end{proposition}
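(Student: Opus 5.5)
Well-definedness comes directly from the coercivity~\eqref{eq:8}. The spaces $H^1_0(\Omega)\otimes\Vt$ and $\Vx\otimes H^2_{0,\bullet}(0,T)$ are closed subspaces of $\V(Q_T)$, since $\Vt$ and $\Vx$ are finite dimensional. The functional $w\mapsto\A(u,w)$ is bounded on each of them: for fixed $u\in\V(Q_T)$ and $w$ in the subspace, each term of $\A$ can be bounded using a norm of $w$ that is controlled by $\|w\|_{\V(Q_T)}$, thanks to finite dimensionality in one variable. For example, $\|\partial_t^2 w\|_{L^2(Q_T)}$ on $H^1_0(\Omega)\otimes\Vt$ is bounded by an inverse estimate in $\Vt$. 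Where this is not immediate, we integrate by parts in time via~\eqref{eq:2} to move derivatives onto $u$. Lax--Milgram on the subspace equipped with a suitable (possibly stronger but equivalent, by finite dimensionality) norm then gives existence and uniqueness.

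For the regularity statements we diagonalize the finite-dimensional factor. In case i), choose a basis $\{\phi_k\}$ of $\Vt$ and write $\Pinfjt=\sum_k U_k(\bx)\phi_k(t)$ with $U_k\in H^1_0(\Omega)$. Testing~\eqref{eq:26} with $w=v(\bx)\phi_m(t)$, $v\in H^1_0(\Omega)$, gives the coupled elliptic system
\begin{equation*}
M\,\mathbf U + K\,\mathcal L\mathbf U = \mathbf F \quad\text{in } H^{-1}(\Omega),
\end{equation*}
where $M_{mk}=(\phi_k'',\phi_m')_{L^2_e}+\phi_k'(0)\phi_m'(0)$ and $K_{mk}=(\phi_k,\phi_m')_{L^2_e}$. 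The right-hand side $\mathbf F$ collects $\A(u,v\phi_m)$, that is, time moments of $\partial_t^2u$, $\partial_tu(\cdot,0)$ and $\mathcal L u$. By~\eqref{eq:3}, $K$ has positive definite symmetric part, so it is invertible, and the system becomes $\mathcal L\mathbf U=K^{-1}(\mathbf F-M\mathbf U)$. We then bootstrap. First we control $\partial_t^2u$ and $\partial_tu(\cdot,0)$: the hypothesis $u\in H^{s_\bx-2}(\Omega)\otimes H^2_{0,\bullet}(0,T)$ places $\partial_t^2u$ and (by trace in time) $\partial_tu(\cdot,0)$ in $H^{s_\bx-2}(\Omega)$, and $\mathcal Lu\in H^{s_\bx-2}(\Omega)\otimes H^1$ from $u\in H^{s_\bx}_0\otimes H^1$. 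Then each step applies elliptic regularity for $\mathcal L$ with Dirichlet conditions, valid under~\eqref{eq:28}, and gains two orders in space: $\mathbf U\in H^1_0\Rightarrow \mathcal L\mathbf U\in H^{\min(1,s_\bx-2)}\Rightarrow \mathbf U\in H^{\min(3,s_\bx)}$, and so on up to $H^{s_\bx}_0$.

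The main obstacle is that the right-hand side contains $M\mathbf U$, so the ellipticity must be applied componentwise after decoupling. To handle this, I would diagonalize: use a real Schur/Jordan form of $K^{-1}M$, or simply note that the system is $\mathcal L$ applied componentwise, which is legitimate because the coupling acts only through the matrix. Induction on the regularity index then works without needing the matrix to be diagonalizable.

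Case ii) is analogous with the roles exchanged. Take a basis $\{\psi_k\}$ of $\Vx$ and write $\Pjxinf=\sum_k\psi_k(\bx)T_k(t)$. Testing~\eqref{eq:27} with $\psi_m(\bx)z(t)$, $z\in H^2_{0,\bullet}$, yields a system of second-order ODEs in weak form:
\begin{equation*}
\mathsf M(\mathbf T'',z')_{L^2_e}+\mathsf M\mathbf T'(0)z'(0)+\mathsf A(\mathbf T,z')_{L^2_e}=\text{data},
\end{equation*}
where $\mathsf M$ is the (SPD) mass matrix and $\mathsf A$ the stiffness matrix. Since $z'$ ranges over all of $L^2(0,T)$, this gives the strong equation $\mathsf M\mathbf T''+\mathsf A\mathbf T=\mathbf G$ a.e., together with the boundary relation. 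Differentiating the identity, with $\mathbf G$ built from $\partial_t^2u$ and $\mathcal Lu$ paired with $\psi_m$ and hence of regularity $H^{s_t-2}(0,T)$, we bootstrap $\mathbf T''=\mathsf M^{-1}(\mathbf G-\mathsf A\mathbf T)$ to obtain $\mathbf T\in H^{s_t}$, with $\mathbf T(0)=0$ inherited from the trial space.
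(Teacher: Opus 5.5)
\textbf{Gap: existence of $P_{j_{\bx},\infty}u$.} The well-definedness argument fails for $P_{j_{\bx},\infty}u$.

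Your Lax--Milgram argument is fine on $H^1_0(\Omega)\otimes V^t_{j_t}(0,T)$. There the inverse estimate in $V^t_{j_t}(0,T)$ controls $\partial_t^2 w$ and $\nabla_{\bx}\partial_t w$, and $\|\cdot\|_{\mathcal{V}(Q_T)}$ is equivalent to the $[H^1_0(\Omega)]^{N^t_{j_t}}$ norm of the coefficients. Hence $\mathcal{A}$ is bounded and coercive on that subspace. The paper does the same on the coefficient system, using that $\mathbf{A}_{j_t}$ and $\mathbf{B}_{j_t}$ have positive definite symmetric parts.

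On $V^{\bx}_{j_{\bx}}(\Omega)\otimes H^2_{0,\bullet}(0,T)$, however, the finite dimensionality is in the wrong variable. The loss of continuity of $\mathcal{A}$ comes from the time term $(\partial_t^2 v,\partial_t w)_{L^2_e(Q_T)}$, and no inverse estimate in space helps. No choice of norm repairs this. By \eqref{eq:3}, $\mathcal{A}(v,v)$ is equivalent to $\|v\|^2_{\mathcal{V}(Q_T)}$, so any norm in which $\mathcal{A}$ is coercive is dominated by $\|\cdot\|_{\mathcal{V}(Q_T)}$. Boundedness in such a norm would then give boundedness in $\|\cdot\|_{\mathcal{V}(Q_T)}$, which fails already on this subspace. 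Take $v=\phi(\bx)a(t)$ and $w=\phi(\bx)b(t)$ with $a'(t)=\sin(nt)$, $b'(t)=\cos(nt)$, $a(0)=b(0)=0$. Then $\mathcal{A}(v,w)\sim n$, while $\|v\|_{\mathcal{V}(Q_T)}$ and $\|w\|_{\mathcal{V}(Q_T)}$ stay bounded. Integrating by parts with \eqref{eq:2} only moves $\partial_t^2$ onto $w$. The subspace is also not complete for $\|\cdot\|_{\mathcal{V}(Q_T)}$. So your argument gives uniqueness of $P_{j_{\bx},\infty}u$ via \eqref{eq:8}, but not existence.

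\textbf{The fix.} The missing ingredient is ODE theory, which is what the paper uses. Your own reduction in ii) supplies it if you run it in the opposite direction. The linear system $\mathsf{M}\mathbf{T}''+\mathsf{A}\mathbf{T}=\mathbf{G}$ with $\mathbf{T}(0)=0$, $\mathsf{M}\mathbf{T}'(0)=\big((\partial_t u(\cdot,0),\psi_m)_{L^2(\Omega)}\big)_m$ and $\mathbf{G}\in L^2(0,T)^N$ has a unique solution in $H^2(0,T)^N$. Multiplying by $e^{-t/T}z'$ and integrating shows that $\sum_k\psi_k T_k$ satisfies \eqref{eq:27}. The paper chooses a basis of $V^{\bx}_{j_{\bx}}(\Omega)$ that is $L^2(\Omega)$-orthonormal and energy-orthogonal, so the system decouples into scalar initial value problems $u_j''+\mu^j_{j_{\bx}}u_j=f^j_{j_{\bx}}$. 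Keeping the mass matrix, as you do, works equally well.

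\textbf{Minor point in ii).} When you derive the strong equation, $z'$ ranges only over a dense subspace of $L^2(0,T)$, not all of it. Take $z'(0)=0$ first to obtain the a.e.\ equation, then recover the initial relation.

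\textbf{Regularity parts.} These are otherwise sound. The bootstrap in ii) matches the paper. In i), inverting $K$ (the paper's $\mathbf{B}_{j_t}$) gives $\mathcal{L}U_k=g_k$ componentwise; you can then bootstrap with scalar Dirichlet regularity. This is a valid, somewhat more elementary alternative to the paper's appeal to regularity for strongly elliptic systems. No Schur or Jordan decomposition is needed.
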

\begin{proof}
We first prove the statement for $P_{\infty,j_t}u$. Let $\{\psi_{j_t}^i\}_{i=1}^{N_{j_t}^t}$ be a basis of $V_{j_t}^t(0,T)$. Then, any element $v_{\infty,j_t} \in H_0^1(\Omega) \otimes V_{j_t}^t(0,T)$ can be expressed as
\begin{equation} \label{eq:29}
    v_{\infty,j_t}(\bx,t) = \sum_{i=1}^{N_{j_t}^t} v_i(\bx) \psi_{j_t}^i(t) \quad \text{with } v_i \in H^1_0(\Omega).
\end{equation}
Indeed, by the definition of the Hilbert tensor product~\cite[Chapter~II, \S~4]{ReedSimon1980}, we can write
\begin{equation*}
    v_{\infty,j_t}(\bx,t) = \sum_{k=1}^\infty z_k(\bx) v_{j_t}^k(t) \quad \text{with } z_k \in H^1_0(\Omega) \text{ and } v_{j_t}^k \in V_{j_t}^t(0,T).
\end{equation*}
Expanding each $v_{j_t}^k \in V_{j_t}^t(0,T)$ as $v_{j_t}^k(t) = \sum_{i=1}^{N_{j_t}^t} v_{j_t,i}^k \psi_{j_t}^i(t)$ with $v_{j_t,i}^k \in \mathbb{R}$, we obtain~\eqref{eq:29}:
\begin{equation*}
    v_{\infty,j_t}(\bx,t) = \sum_{k=1}^{\infty} z_k(\bx) \sum_{i=1}^{N_{j_t}^t} v_{j_t,i}^k \psi_{j_t}^i(t) = \sum_{i=1}^{N_{j_t}^t} v_i(\bx) \psi^i_{j_t}(t),
\end{equation*}
where $v_i(\bx) := \sum_{k=1}^{\infty} v_{j_t,i}^k z_k(\bx)$. In view of \eqref{eq:29}, finding $P_{\infty,j_t}u$ that solves~\eqref{eq:26} is equivalent to finding its coefficient functions $\{u_j\}_{j=1}^{N_{j_t}^t} \subset H_0^1(\Omega)$ in the expansion $P_{\infty,j_t} u(\bx,t) = \sum_{j=1}^{N_{j_t}^t} u_{j}(\bx) \psi_{j_t}^{j}(t)$. Writing the test functions as $w_{\infty,j_t}(\bx,t) = \sum_{i=1}^{N_{j_t}^t} w_{i}(\bx) \psi_{j_t}^{i}(t)$ with $w_{i} \in H^1_0(\Omega)$, the left-hand side of problem \eqref{eq:26} becomes
\begin{equation*}
    \mathcal{A}(P_{\infty,j_t}u, w_{\infty,j_t}) = \sum_{i=1}^{N_{j_t}^t} \sum_{j=1}^{N_{j_t}^t} \left( \mathbf{A}_{j_t}[i,j] (u_j, w_i)_{L^2(\Omega)} + \mathbf{B}_{j_t}[i,j] (c^2 \nabla_{\bx} u_j, \nabla_{\bx} w_i)_{L^2(\Omega)} \right),
\end{equation*}
where the $(N_{j_t}^t\times N_{j_t}^t)$ matrices $\mathbf{A}_{j_t}$ and $\mathbf{B}_{j_t}$ are defined as
\begin{equation*}
\begin{aligned}
    \mathbf{A}_{j_t}[i,j] := ((\psi_{j_t}^j)'', (\psi_{j_t}^i)')_{L^2_e(0,T)} + (\psi_{j_t}^j)'(0) (\psi_{j_t}^i)'(0), \quad
    \mathbf{B}_{j_t}[i,j] := (\psi_{j_t}^j, (\psi_{j_t}^i)')_{L^2_e(0,T)}.
\end{aligned}
\end{equation*}
We thus consider the following reformulation of~\eqref{eq:26}:
find $\mathbf{u}_{j_t} \in [H^1_0(\Omega)]^{N_{j_t}^t}$ such that
\begin{equation} \label{eq:30}
    (\mathbf{A}_{j_t} \mathbf{u}_{j_t}, \mathbf{w})_{L^2(\Omega)} + (\mathbf{B}_{j_t} c^2 \nabla_{\bx} \mathbf{u}_{j_t}, \nabla_{\bx} \mathbf{w})_{L^2(\Omega)} = (\mathbf{F}_{j_t}, \mathbf{w})_{L^2(\Omega)} \quad \text{for all~} \mathbf{w} \in [H^1_0(\Omega)]^{N_{j_t}^t},
\end{equation}
where the source term $\mathbf{F}_{j_t} \in [L^2(\Omega)]^{N_{j_t}^t}$ is defined componentwise as
\begin{equation*}
    \mathbf{F}_{j_t}[i](\bx) := (\partial_t^2 u(\bx,\cdot), (\psi_{j_t}^{i})')_{L^2_e(0,T)} + \partial_t u(\bx,0) (\psi_{j_t}^{i})'(0) - (\nabla_{\bx} \cdot (c^2 \nabla_{\bx} u(\bx,\cdot)), (\psi_{j_t}^{i})')_{L^2_e(0,T)}.
\end{equation*}
If $\mathbf{u}_{j_t}$ solves~\eqref{eq:30}, then $P_{\infty,j_t} u(\bx,t) := \sum_{j=1}^{N_{j_t}^t} u_j(\bx) \psi_{j_t}^j(t)$, with $u_j := \mathbf{u}_{\jt}[j]$, solves~\eqref{eq:26}.

\noindent 
As an immediate consequence of~\eqref{eq:2}, both matrices~$\mathbf{A}_{j_t}$ and $\mathbf{B}_{j_t}$ have positive definite symmetric parts. Thus, by the Lax--Milgram lemma, there exists a unique solution $\mathbf{u}_{\jt} \in [H_0^1(\Omega)]^{N_{j_t}^t}$ to problem~\eqref{eq:30}. This implies existence of~$P_{\infty,j_t}u$ solution of~\eqref{eq:26}. 
The coercivity of $\mathcal{A}$ in \eqref{eq:8} implies that~$P_{\infty,j_t}u$ is unique.
Moreover, if $u$ satisfies the regularity in $i)$ for $s_{\bx} \ge 1$, and $c \in H^{s_{\bx}-1}(\Omega)$, then $\mathbf{F}_{j_t} \in [H^{s_{\bx}-2}(\Omega)]^{N_{j_t}^t}$. Under the assumptions in \eqref{eq:28}, we deduce $\mathbf{u}_{\jt} \in [H_0^{s_{\bx}}(\Omega)]^{N_{j_t}^t}$ (see \cite[Theorem 4.18]{mclean2000strongly}), and thus $P_{\infty,j_t} u \in H_0^{s_{\bx}}(\Omega) \otimes V^t_{j_t}(0,T)$.

\noindent 
We now prove the statement for $P_{j_{\bx},\infty} u$. Let $\{\psi_{j_{\bx}}^i\}_{i=1}^{N_{j_{\bx}}^{\bx}}$ be a basis of $V_{j_{\bx}}^{\bx}(\Omega)$ that is orthonormal in $L^2(\Omega)$ and orthogonal in $H^1_0(\Omega)$ with respect to the weighted product $(c^2 \nabla_{\bx} \cdot, \nabla_{\bx} \cdot)_{L^2(\Omega)}$. We proceed analogously to the first part. Finding~$P_{j_{\bx},\infty}u$ that solves~\eqref{eq:27} is equivalent to finding its coefficient functions $\{u_j\}_{j=1}^{N_{j_{\bx}}^{\bx}} \subset H^2_{0,\bullet}(0,T)$ in the expansion~$P_{j_{\bx},\infty} u(\bx,t) = \sum_{j=1}^{N_{j_{\bx}}^{\bx}} \psi_{j_{\bx}}^j(\bx) u_j(t)$. Writing the test functions $w_{j_{\bx},\infty} \in V_{j_{\bx}}^{\bx}(\Omega) \otimes H^2_{0,\bullet}(0,T)$ as $w_{j_{\bx},\infty}(\bx,t) = \sum_{i=1}^{N_{j_{\bx}}^{\bx}} \psi_{j_{\bx}}^i(\bx)w_i(t)$, where $w_i \in H^2_{0,\bullet}(0,T)$, and exploiting the orthogonality of the basis, formulation~\eqref{eq:27} can be rewritten as
\begin{equation*}
    \sum_{j=1}^{N_{j_{\bx}}^{\bx}} \left( (u_{j}'', w_{j}')_{L^2_e(0,T)} + u_j'(0) w_j'(0) + \mu_{j_{\bx}}^{j} (u_j, w_j')_{L^2_e(0,T)} \right)
    = \sum_{j=1}^{N_{j_{\bx}}^{\bx}} \left( (f_{j_{\bx}}^{j}, w_j')_{L^2_e(0,T)} + (\partial_t u(\cdot,0), \psi_{j_{\bx}}^{j})_{L^2(\Omega)} w_j'(0) \right),
\end{equation*}
where we have set $\mu_{j_{\bx}}^{j} := (c^2 \nabla_{\bx} \psi^{j}_{j_{\bx}}, \nabla_{\bx} \psi^{j}_{j_{\bx}})_{L^2(\Omega)}$, and 
\begin{equation*}
    f_{j_{\bx}}^{j}(t) := (\partial_t^2 u(\cdot,t), \psi^{j}_{j_{\bx}})_{L^2(\Omega)} + (c^2 \nabla_{\bx} u(\cdot,t), \nabla_{\bx} \psi_{j_{\bx}}^{j})_{L^2(\Omega)}.
\end{equation*}
For each $j=1,\dots,N_{j_{\bx}}^{\bx}$, the initial value problem
\begin{equation*}
\begin{cases}
    u_{j}''(t) + \mu_{j_{\bx}}^{j} u_{j}(t) = f^{j}_{j_{\bx}}(t), & t \in [0,T], \\
    u_{j}(0) = 0, \quad u_{j}'(0) = (\partial_t u(\cdot,0), \psi_{j_{\bx}}^j)_{L^2(\Omega)},
\end{cases}
\end{equation*}
admits a unique solution. This ensures the existence of $P_{j_{\bx},\infty}u$. The uniqueness follows from the coercivity of $\mathcal{A}$ in~\eqref{eq:8}. Furthermore, if $u$ satisfies the regularity in $ii)$ for $s_t \ge 2$, then $f_{j_{\bx}}^j \in H^{s_t - 2}(0,T)$. This implies that $u_j \in H_{0,\bullet}^{s_t}(0,T)$ for all $j=1,\dots,N_{j_{\bx}}^{\bx}$, and consequently $P_{j_{\bx},\infty}u \in V_{j_{\bx}}^{\bx}(\Omega) \otimes H_{0,\bullet}^{s_t}(0,T)$.
\end{proof}

Estimates for the semidiscrete errors~$(P_{\infty,j_t}-\Id)u$ and $(P_{j_{\bx},\infty}-\Id)u$, which are not used in the argument leading to Theorem~\ref{th:1}, are derived in Appendix~\ref{sec:errest}. In the following section, we establish stability estimates for~$P_{\infty,j_t}$ and~$P_{j_{\bx},\infty}$, which are used in the proof of convergence of the combination formula in Section~\ref{sec:convergence} below.

\subsection{Stability estimates in higher-order norms}\label{sec:stab}

Consider the spectral decomposition of the elliptic operator $\mathcal{L} = -\nabla_{\bx} \cdot (c^2 \nabla_{\bx}\cdot ) $ with homogeneous Dirichlet boundary conditions, and let $\{(\lambda_m, \phi_m)\}_{m=1}^\infty$ be the eigenpairs such that $\mathcal{L} \phi_m = \lambda_m \phi_m$, with~$\{\phi_m\}_{m=1}^\infty$ orthonormal in $L^2(\Omega)$ and orthogonal in $H^1_0(\Omega)$ with respect to the weighted product $(c^2 \nabla_{\bx} \cdot, \nabla_{\bx} \cdot)_{L^2(\Omega)}$. For~$s_{\bx}$ a nonnegative integer, let us define~$\mathcal{H}^{s_{\bx}}(\Omega) \subset L^2(\Omega)$ as the space of functions $v$ such that
\begin{equation*}
    \| v \|_{\mathcal{H}^{s_{\bx}}(\Omega)} := \left(\sum_{m=1}^\infty \lambda_m^{s_{\bx}} (v,\phi_m)^2_{L^2(\Omega)} \right)^{1/2} < \infty.
\end{equation*}
Let $\mathcal{B}_{j_t} : V_{j_t}^t(0,T) \to V_{j_t}^t(0,T)$ be the operator defined by
\begin{equation} \label{eq:40}
    ((\mathcal{B}_{j_t} z_{j_t})', v_{j_t}' )_{L_e^2(0,T)} = (z_{j_t}'', v_{j_t}')_{L_e^2(0,T)} + z_{j_t}'(0) v_{j_t}'(0) \quad \text{for all~} v_{j_t} \in V_{j_t}^t(0,T).
\end{equation}
This operator is well-defined by the Riesz representation theorem. We also use~$\mathcal{B}_{j_t}$ to denote its natural extension to space--time functions.

\medskip
\noindent 
The following stability result holds.
\begin{lemma} \label{lem:5}
Let $u \in \mathcal{H}^{s_{\bx}+3}(\Omega) \otimes H_{0,\bullet}^2(0,T)$ with $s_{\bx} \ge 0$ and define $z := (\Id^{\bx} \otimes (\Id^t-\Pi^{\partial_t^2}_{j_t})) u$. Then, the following estimates hold true for all $\jt\ge 0$:
\begin{align*}
    \| P_{\infty,j_t}  z\|_{\mathcal{H}^{s_{\bx}}(\Omega)\otimes L^2(0,T)} & \lesssim \| z \|_{\mathcal{H}^{s_{\bx}+1}(\Omega)\otimes L^2(0,T)}
    \\ \|\partial_t \mathcal{B}_{j_t} P_{\infty,j_t}  z\|_{\mathcal{H}^{s_{\bx}}(\Omega)\otimes L^2(0,T)} & \lesssim \| z \|_{\mathcal{H}^{s_{\bx}+2}(\Omega)\otimes L^2(0,T)} + \| z \|_{\mathcal{H}^{s_{\bx}+3}(\Omega)\otimes L^2(0,T)}
\end{align*}
\end{lemma}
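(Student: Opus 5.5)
The plan is to diagonalize in space using the eigenpairs $\{(\lambda_m,\phi_m)\}_{m}$ of $\mathcal{L}$. This reduces the semidiscrete problem~\eqref{eq:26} to a family of scalar problems in $\Vt$, one per eigenmode. Each scalar problem is then estimated by testing with suitable discrete functions, and the modal bounds are summed with weights $\lambda_m^{s_{\bx}}$.

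\emph{Step 1: modal decoupling.} Write $z(\bx,t)=\sum_m z_m(t)\phi_m(\bx)$ with $z_m := (z(\cdot,t),\phi_m)_{L^2(\Omega)}$. Because $\Pi^{\partial_t^2}_{j_t}$ acts only in time, each coefficient satisfies $z_m=(\Id^t-\Pi^{\partial_t^2}_{j_t})u_m$, where $u_m\in H^2_{0,\bullet}(0,T)$ is the corresponding coefficient of $u$. Test~\eqref{eq:26} with $w=\phi_m v$, $v\in\Vt$, and use the $L^2$-orthonormality and weighted $H^1_0$-orthogonality of $\{\phi_m\}$. This shows that $w_m:=(P_{\infty,j_t}z,\phi_m)_{L^2(\Omega)}\in\Vt$ solves
\[
(w_m'',v')_{L^2_e(0,T)}+w_m'(0)v'(0)+\lambda_m(w_m,v')_{L^2_e(0,T)}=(z_m'',v')_{L^2_e(0,T)}+z_m'(0)v'(0)+\lambda_m(z_m,v')_{L^2_e(0,T)}
\]
for all $v\in\Vt$. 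By the definition~\eqref{eq:12} of $\Pi^{\partial_t^2}_{j_t}$, the first two terms on the right-hand side vanish. The problem is well posed, and these scalar solutions reassemble to $P_{\infty,j_t}z$ by uniqueness (Proposition~\ref{prop:1}).

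\emph{Step 2: first estimate.} Test with $v=w_m$. By~\eqref{eq:3} applied to $w_m'$, the first two terms give at least $\frac{1}{2T}\|w_m'\|^2_{L^2_e(0,T)}$. The term $\lambda_m(w_m,w_m')_{L^2_e(0,T)}$, using~\eqref{eq:3} with $w_m(0)=0$, gives at least $\frac{\lambda_m}{2T}\|w_m\|^2_{L^2_e(0,T)}$. Set $a:=\|w_m'\|_{L^2_e(0,T)}$ and $b:=\|w_m\|_{L^2_e(0,T)}$. Then
\[
a^2+\lambda_m b^2\lesssim \lambda_m\|z_m\|_{L^2(0,T)}\,a .
\]
This yields first $a\lesssim\lambda_m\|z_m\|$, and then $\lambda_m b^2\lesssim\lambda_m^2\|z_m\|^2$, that is, $b\lesssim\lambda_m^{1/2}\|z_m\|$. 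Multiplying by $\lambda_m^{s_{\bx}}$, summing over $m$, and using the equivalence of $L^2_e$ and $L^2$ gives the first bound, with $\mathcal{H}^{s_{\bx}+1}$ on the right-hand side.

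The key point is to exploit both coercive terms simultaneously. A naive argument via Poincar\'e, $b\lesssim a$, would lose a half power of $\lambda_m$ and produce only $\mathcal{H}^{s_{\bx}+2}$ on the right.

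\emph{Step 3: second estimate.} By~\eqref{eq:40}, the modal equation reads
\[
((\mathcal{B}_{j_t}w_m)',v')_{L^2_e(0,T)}=\lambda_m(z_m-w_m,v')_{L^2_e(0,T)}\qquad\text{for all } v\in\Vt .
\]
Taking $v=\mathcal{B}_{j_t}w_m\in\Vt$ gives
\[
\|(\mathcal{B}_{j_t}w_m)'\|_{L^2_e(0,T)}\le\lambda_m\big(\|z_m\|+\|w_m\|\big)\lesssim\lambda_m\|z_m\|+\lambda_m^{3/2}\|z_m\|,
\]
where Step 2 is used to bound $\|w_m\|$. Weighting by $\lambda_m^{s_{\bx}}$ and summing over $m$ gives the claimed bound by $\|z\|_{\mathcal{H}^{s_{\bx}+2}(\Omega)\otimes L^2(0,T)}+\|z\|_{\mathcal{H}^{s_{\bx}+3}(\Omega)\otimes L^2(0,T)}$. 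The assumed regularity $u\in\mathcal{H}^{s_{\bx}+3}(\Omega)\otimes H^2_{0,\bullet}(0,T)$ makes these norms of $z$ finite, since $\Pi^{\partial_t^2}_{j_t}$ is bounded on $H^2(0,T)$.

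The main obstacle is the sharp half-power gain in Step 2. A secondary technical point is the rigorous justification that $P_{\infty,j_t}$ commutes with the spectral expansion. This is handled by testing~\eqref{eq:26} with $\phi_m v$ and invoking uniqueness; the constants are independent of $j_t$ and $m$ because they come only from~\eqref{eq:3}.
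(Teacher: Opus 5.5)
Your proposal is correct and follows essentially the same route as the paper. Both arguments decompose in the eigenbasis of $\mathcal{L}$, test the $m$-th modal equation with $w_m$ and use~\eqref{eq:3} on both $w_m'$ and $w_m$ to obtain $\|w_m\|_{L^2_e(0,T)}^2\lesssim\lambda_m\|z_m\|_{L^2_e(0,T)}^2$, and then test with $\mathcal{B}_{j_t}w_m$. The only cosmetic difference is how the half-power gain is extracted: you first bound $\|w_m'\|_{L^2_e(0,T)}$ and substitute back, whereas the paper absorbs that term with Young's inequality, which yields the same bound.
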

\begin{proof}
We decompose $z$ and $P_{\infty,j_t}z$ as
\begin{equation*}
    z(\bx, t) = \sum_{m=1}^\infty \hat{z}_m(t) \phi_m(\bx), \quad P_{\infty,j_t}z(\bx, t) = \sum_{m=1}^\infty \hat{w}_m^{j_t}(t) \phi_m(\bx),
\end{equation*}
with~$\hat{z}_m \in H_{0,\bullet}^2(0,T)$ and $\hat w_m^{j_t} \in V_{j_t}^t(0,T)$.

\noindent
From definition~\eqref{eq:26}, the condition $\mathcal{A}(P_{\infty,j_t}z, v_{j_t}) = \mathcal{A}(z, v_{j_t})$ must hold for any $v_{j_t} \in H^1_0(\Omega) \otimes V_{j_t}^t(0,T)$. We choose the specific test function $v_{j_t}(\bx,t) = \phi_m(\bx) \eta_{j_t}(t)$, where $\eta_{j_t} \in V_{j_t}^t(0,T)$ is arbitrary. Substituting the above expansions into the bilinear form $\mathcal{A}$, and using the orthogonality and orthonormality properties of the eigenfunctions~$\{\phi_m\}_{m=1}^\infty$, we obtain
\begin{align*}
    \mathcal{A}(P_{\infty,j_t}z, \phi_m \eta_{j_t}) &= (( \hat{w}_m^{j_t})'', \eta_{j_t}')_{L_e^2(0,T)} + (\hat{w}_m^{j_t})'(0)  \eta_{j_t}'(0) + \lambda_m (\hat{w}_m^{j_t}, \eta_{j_t}')_{L_e^2(0,T)}, \\
    \mathcal{A}(z, \phi_m \eta_{j_t}) &=  \lambda_m (\hat{z}_m, \eta_{j_t}')_{L_e^2(0,T)}.
\end{align*}
Then, the discrete variational equation for the $m$-th mode becomes
\begin{equation} \label{eq:41}
    ((\hat{w}_m^{j_t})'', \eta_{j_t}')_{L_e^2(0,T)} + (\hat{w}_m^{j_t})'(0) \eta_{j_t}'(0) + \lambda_m (\hat{w}_m^{j_t}, \eta_{j_t}')_{L_e^2(0,T)} = \lambda_m (\hat{z}_m, \eta_{j_t}')_{L_e^2(0,T)}
\end{equation}
or, equivalently, with $\mathcal{B}_{j_t}$ as in \eqref{eq:40},
\begin{equation} \label{eq:42}
    (\left(\mathcal{B}_{j_t} \hat{w}_m^{j_t}\right)', \eta_{j_t}')_{L_e^2(0,T)}  + \lambda_m (\hat{w}_m^{j_t}, \eta_{j_t}')_{L_e^2(0,T)} = \lambda_m (\hat{z}_m, \eta_{j_t}')_{L_e^2(0,T)}.
\end{equation}
Testing~\eqref{eq:41} with $\eta_{j_t} = \hat{w}_m^{j_t}$ yields
\begin{equation*}
    ((\hat{w}_m^{j_t})'', (\hat{w}_m^{j_t})')_{L_e^2(0,T)} + | (\hat{w}_m^{j_t})'(0)|^2 + \lambda_m (\hat{w}_m^{j_t}, (\hat{w}_m^{j_t})')_{L_e^2(0,T)} = \lambda_m (\hat z_m, (\hat w^{j_t}_m)')_{L^2_e(0,T)}.
\end{equation*}
This, together with~\eqref{eq:3} and the Cauchy--Schwarz inequality, gives 
\begin{align*}
      \frac{1}{2T} \|( \hat{w}_m^{j_t})'\|_{L_e^2(0,T)}^2 + \frac{1}{2e} |( \hat w_m^{j_t})'(T)|^2 +\frac{1}{2} |(\hat{w}_m^{j_t})'(0)|^2 + \frac{\lambda_m}{2T} \|\hat{w}_m^{j_t}\|_{L_e^2(0,T)}^2 & + \frac{\lambda_m}{2e} |\hat w_m^{j_t}(T)|^2
      \\ & \le \lambda_m \| \hat z_m \|_{L^2_e(0,T)} \| (\hat w^{j_t}_m)' \|_{L^2_e(0,T)},
\end{align*}
from which
\begin{equation*}
  \frac{1}{2T} \|(\hat{w}_m^{j_t})'\|_{L_e^2(0,T)}^2 + \frac{\lambda_m}{2T} \|\hat{w}_m^{j_t}\|_{L_e^2(0,T)}^2 \le \lambda_m \|\hat{z}_m\|_{L_e^2(0,T)} \|(\hat{w}_m^{j_t})'\|_{L_e^2(0,T)} \le \frac{T\lambda_m^2}{2}\|\hat{z}_m\|_{L_e^2(0,T)}^2+\frac{1}{2T}\|(\hat{w}_m^{j_t})'\|_{L_e^2(0,T)}^2,
\end{equation*}
where, in the second inequality, we have used the Young inequality. This implies
\begin{equation} \label{eq:43}
    \|\hat{w}_m^{j_t}\|_{L_e^2(0,T)}^2 \le T^2 \lambda_m \|\hat{z}_m\|_{L_e^2(0,T)}^2,
\end{equation}
which gives 
\begin{align*}
    \| P_{\infty,j_t}z\|_{\mathcal{H}^{s_{\bx}}(\Omega) \otimes L^2(0,T)}^2 = \sum_{m=1}^\infty \lambda_m^{s_{\bx}} \| \hat{w}_m^{j_t}\|_{L_e^2(0,T)}^2 
    & \le T^2 \sum_{m=1}^\infty \lambda_m^{s_{\bx}+1} \|\hat{z}_m\|_{L_e^2(0,T)}^2  = T^2\|z\|_{\mathcal{H}^{s_{\bx}+1}(\Omega) \otimes L^2(0,T)}^2.
\end{align*}
This proves the first estimate in the statement and, in particular, implies $P_{\infty,j_t} z \in \mathcal{H}^{s_{\bx}}(\Omega) \otimes L^2(0,T)$.

\medskip
\noindent 
For the second estimate, testing~\eqref{eq:42} with~$ \eta_{j_t} = \mathcal{B}_{j_t} \hat{w}_m^{j_t}$, we deduce
\begin{equation*}
\begin{aligned}
    \|(\mathcal{B}_{j_t} \hat{w}_m^{j_t} )'\|^2_{L^2_e(0,T)} & =\lambda_m(\hat{z}_m-\hat{w}_m^{j_t},(\mathcal{B}_{j_t} \hat{w}_m^{j_t} )')_{L^2_e(0,T)}
     \lesssim (\lambda_m^{3/2}  +  \lambda_m) \| \hat z_m \|_{L^2_e(0,T)} \|  (\mathcal{B}_{j_t} \hat w_{m}^{j_t})'\|_{L^2_e(0,T)},
\end{aligned}
\end{equation*}
where the last step follows from the Cauchy--Schwarz inequality and~\eqref{eq:43}. Then, we conclude
\begin{align*}
    \|\partial_t \mathcal{B}_{j_t} P_{\infty,j_t}z\|_{\mathcal{H}^{s_{\bx}}(\Omega) \otimes L^2(0,T)}^2 = \sum_{m=1}^\infty \lambda_m^{s_{\bx}} \|(\mathcal{B}_{j_t} \hat{w}_m^{j_t})'\|_{L_e^2(0,T)}^2 & \lesssim \sum_{m=1}^\infty \lambda_m^{s_{\bx}} \left( (\lambda_m^3+\lambda_m^2) \|\hat{z}_m\|_{L_e^2(0,T)}^2 \right)
    \\ & = \|z\|_{\mathcal{H}^{s_{\bx}+3}(\Omega) \otimes L^2(0,T)}^2 + \|z\|_{\mathcal{H}^{s_{\bx}+2}(\Omega) \otimes L^2(0,T)}^2,
\end{align*}
which completes the proof.
\end{proof}
\noindent
One could expect to obtain the first bound in Lemma~\ref{lem:5} with the same norm 
$\|\cdot\|_{\mathcal{H}^{s_{\bx}}(\Omega)\otimes L^2(0,T)}$ on the left- and on the right-hand side. However, the stronger norm on the right-hand side is due to the presence of the eigenvalue $\lambda_m$ in the bound~\eqref{eq:43}, which naturally arises from the discrete variational ODE \eqref{eq:41}.

\medskip
\noindent
For~$c=1$, the following characterization of the space~$\mathcal{H}^{s_{\bx}}(\Omega)$ was proved, e.g., in~\cite[Lemma 3.1]{thomee2006galerkin}:
\begin{equation*}
    \mathcal{H}^{s_{\bx}}(\Omega) = \{ v \in H^{s_{\bx}}(\Omega) \mid \mathcal{L}^{\ell_{\bx}} v = 0 \quad \text{on~} \partial \Omega, \quad \text{for~} \ell_{\bx} < s_{\bx}/2,  \ell_{\bx}\in \mathbb{N}\},
\end{equation*}
together with the equivalence between the norm~$\| \cdot \|_{\mathcal{H}^{s_{\bx}}(\Omega)}$ and the standard Sobolev norm~$ \| \cdot \|_{H^{s_{\bx}}(\Omega)}$, where boundary conditions are intended in the sense of traces in $L^2(\partial \Omega)$. This result readily extends to any coefficient~$c \in W^{\infty,s_{\bx}-1}(\Omega)$. Therefore, an immediate consequence of Lemma~\ref{lem:5} is the following result.
\begin{corollary}\label{cor:1}
Let $u \in H^{s_{\bx}+3}(\Omega) \otimes H_{0,\bullet}^2(0,T)$ with $ s_{\bx} \ge 0$, and define $z := (\Id^{\bx} \otimes (\Id^t - \Pi^{\partial_t^2}_{j_t})) u$. Suppose also that~\eqref{eq:28} holds and that~$c \in W^{\infty,s_{\bx}+2}$. Then, the following estimates hold true for all $j_t \ge 0$:
\begin{align} \nonumber 
    \| P_{\infty,j_t} z \|_{H^{s_{\bx}}(\Omega) \otimes L^2(0,T)} & \lesssim \| z \|_{H^{s_{\bx}+1}(\Omega) \otimes L^2(0,T)}, \\ \nonumber \| \partial_t \mathcal{B}_{j_t} P_{\infty,j_t} z \|_{H^{s_{\bx}}(\Omega) \otimes L^2(0,T)} & \lesssim \| z \|_{H^{s_{\bx}+3}(\Omega) \otimes L^2(0,T)},
\end{align}
provided that $\mathcal{L}^{\ell_{\bx}} z(\bx,t)=0$ for all $(\bx,t) \in \partial \Omega \times (0,T)$ and integers $\ell_{\bx}$ such that $1 \le \ell_{\bx}< (s_{\bx}+3)/2$, where~$\mathcal{L} = - \nabla_{\bx} \cdot (c^2 \nabla_{\bx} \cdot)$.
\end{corollary}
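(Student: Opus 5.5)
The plan is to derive Corollary~\ref{cor:1} directly from Lemma~\ref{lem:5}. The step that connects them is the characterization of $\mathcal{H}^{s}(\Omega)$ as the subspace of $H^{s}(\Omega)$ on which the iterated boundary traces $\mathcal{L}^{\ell_{\bx}} v|_{\partial\Omega}$ vanish for $\ell_{\bx}<s/2$, together with the equivalence of $\|\cdot\|_{\mathcal{H}^{s}(\Omega)}$ and $\|\cdot\|_{H^{s}(\Omega)}$ on that subspace. This is recalled after Lemma~\ref{lem:5}, extended to the coefficient $c$ under the stated smoothness assumptions, and uses the elliptic regularity guaranteed by~\eqref{eq:28}.

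\textbf{Step 1: $z$ lies in $\mathcal{H}^{s_{\bx}+3}(\Omega)\otimes L^2(0,T)$.} Since $\Id^{\bx}\otimes(\Id^t-\Pi^{\partial_t^2}_{j_t})$ acts only in time, it commutes with every spatial operator. Hence $u\in H^{s_{\bx}+3}(\Omega)\otimes H^2_{0,\bullet}(0,T)$ gives $z\in H^{s_{\bx}+3}(\Omega)\otimes H^2_{0,\bullet}(0,T)$, and the $\Pi^{\partial_t^2}_{j_t}$ part maps into $\Vt\subset H^2_{0,\bullet}(0,T)$. For the boundary conditions:
\begin{itemize}
\item the case $\ell_{\bx}=0$ ($z=0$ on $\partial\Omega$) holds because $u(\cdot,t)\in H^1_0(\Omega)$, which is implicit in $u\in\V(Q_T)$, and $\Pi^{\partial_t^2}_{j_t}$ acts in time only;
\item the cases $1\le \ell_{\bx}<(s_{\bx}+3)/2$ are exactly the hypothesis of the corollary.
\end{itemize}
Applying the characterization pointwise a.e.\ in $t$ and integrating in time, I get $z\in\mathcal{H}^{s_{\bx}+3}(\Omega)\otimes L^2(0,T)$ with
\[
\|z\|_{\mathcal{H}^{k}(\Omega)\otimes L^2(0,T)}\simeq \|z\|_{H^{k}(\Omega)\otimes L^2(0,T)} \quad\text{for } k\le s_{\bx}+3 .
\]
The lower-order spaces $k=s_{\bx}+1,s_{\bx}+2$ need only a subset of the same boundary conditions. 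In the proof of Lemma~\ref{lem:5} the only property of $z$ actually used is its modal expansion, so membership in $\mathcal{H}^{s_{\bx}+3}(\Omega)\otimes L^2(0,T)$ (not of $u$ itself) is enough to apply the lemma to $z$.

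\textbf{Step 2: the left-hand sides.} Lemma~\ref{lem:5} bounds $\|P_{\infty,j_t}z\|_{\mathcal{H}^{s_{\bx}}\otimes L^2}$ and $\|\partial_t\mathcal{B}_{j_t}P_{\infty,j_t}z\|_{\mathcal{H}^{s_{\bx}}\otimes L^2}$. To convert these into $H^{s_{\bx}}$ norms, I only need the easy direction of the characterization, $\|v\|_{H^{s}}\lesssim\|v\|_{\mathcal{H}^{s}}$ for $v\in\mathcal{H}^{s}(\Omega)$. This follows from iterated elliptic regularity for $\mathcal{L}$ under~\eqref{eq:28}: each application of $\mathcal{L}^{-1}$ gains two derivatives, so $\|v\|_{H^{2k}}\lesssim\|\mathcal{L}^k v\|_{L^2}$, with the odd indices handled via $H^1_0$ and the weighted Dirichlet form. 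No boundary conditions need to be checked here, because membership in $\mathcal{H}^{s_{\bx}}$ is delivered by the lemma's proof.

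\textbf{Step 3: combine.} The $\mathcal{H}^{s_{\bx}+2}$ term on the right of the second estimate is absorbed by $\lambda_m\ge\lambda_1>0$, since
\[
\|z\|_{\mathcal{H}^{s_{\bx}+2}(\Omega)\otimes L^2(0,T)}\le\lambda_1^{-1/2}\|z\|_{\mathcal{H}^{s_{\bx}+3}(\Omega)\otimes L^2(0,T)} .
\]
Chaining the inequalities then gives both claimed estimates.

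The main obstacle is Step 1: the characterization of $\mathcal{H}^{s}(\Omega)$ for variable $c$, in particular the reverse bound $\|v\|_{\mathcal{H}^{s}}\lesssim\|v\|_{H^{s}}$. My plan is to follow the proof of \cite[Lemma 3.1]{thomee2006galerkin}, checking that each integration by parts $(\mathcal{L}^{k}v,\phi_m)=\lambda_m^{k}(v,\phi_m)$ needs exactly the vanishing traces of $\mathcal{L}^{\ell}v$ for $\ell<k$. The coefficient regularity $c\in W^{\infty,s_{\bx}+2}$ ensures $\mathcal{L}^{k}v\in H^{s-2k}$ with norm controlled by $\|v\|_{H^s}$.
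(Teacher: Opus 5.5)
Your proposal is correct and follows the paper's route. The paper also presents Corollary~\ref{cor:1} as an immediate consequence of Lemma~\ref{lem:5} combined with the characterization of $\mathcal{H}^{s_{\bx}}(\Omega)$ recalled after that lemma, and your Steps~1--3 spell out what the paper leaves implicit: the $\ell_{\bx}=0$ trace coming from $H^1_0(\Omega)$, applying the lemma to $z$ rather than $u$, and absorbing the $\mathcal{H}^{s_{\bx}+2}$ term. The one inaccuracy is the two-sided equivalence claimed in Step~1 for all $k\le s_{\bx}+3$. The bound $\|\cdot\|_{H^k}\lesssim\|\cdot\|_{\mathcal{H}^k}$ for $k>s_{\bx}$ would need elliptic regularity beyond~\eqref{eq:28}, but it is never used: as you note later, only $\|z\|_{\mathcal{H}^{k}}\lesssim\|z\|_{H^{k}}$ is needed on the right-hand side, so the argument is unaffected.
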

\noindent
Let $\mathcal{L}_{j_{\bx}} : V_{j_{\bx}}^{\bx}(\Omega) \to V_{j_{\bx}}^{\bx}(\Omega)$ be the discrete negative diffusion operator with coefficient~$c$, which is defined by
\begin{equation} \label{eq:44}
    (\mathcal{L}_{j_{\bx}} z_{j_{\bx}}, v_{j_{\bx}} )_{L^2(\Omega)} = (c^2 \nabla_{\bx} z_{j_{\bx}}, \nabla_{\bx} v_{j_{\bx}})_{L^2(\Omega)}\quad \text{for all~} v_{j_{\bx}} \in V_{j_{\bx}}^{\bx}(\Omega).
\end{equation}
This operator is well defined by the Riesz representation theorem. Furthermore, it can be naturally extended to space--time functions.

\medskip
\noindent 
The following stability results hold.
\begin{lemma}\label{lem:4}
Let $u \in H^1_0(\Omega) \otimes H^{s_t+3}(0,T)$ with $ s_t \ge 1$, and define $w := ((\Id^{\bx}-\Pi^{\nabla_{\bx}}_{j_{\bx}})\otimes \Id^t) u$. Then, the following estimates hold for all $j_{\bx} \ge 0$: 
\begin{align} \label{eq:45}
& \begin{aligned}
    \| \partial_t^{s_t} P_{j_{\bx},\infty} w \|_{L^2(Q_T)} & \lesssim \|\partial_t^{s_t+1} w \|_{L^2(Q_T)} 
    \\ & \quad + \|\partial_t^{s_t} P_{j_{\bx},\infty} w(\cdot, 0)\|_{L^2(\Omega)} + \|c \nabla_{\bx} \partial_t^{s_t-1} P_{j_{\bx},\infty} w (\cdot, 0)\|_{L^2(\Omega)},
\end{aligned}
\\ \label{eq:46}
& \begin{aligned}
    \|\partial_t^{s_t} \mathcal{L}_{j_{\bx}} P_{j_{\bx},\infty}w \|_{L^2(Q_T)} & \lesssim \|\partial_t^{s_t+2} w \|_{L^2(Q_T)} + \|\partial_t^{s_t+3} w \|_{L^2(Q_T)} 
    \\ & \quad + \|\partial_t^{s_t+2} P_{j_{\bx},\infty} w(\cdot, 0)\|_{L^2(\Omega)} + \|c \nabla_{\bx} \partial_t^{s_t+1} P_{j_{\bx},\infty} w (\cdot, 0)\|_{L^2(\Omega)}.
\end{aligned}
\end{align}
Moreover, the following identity holds for all $\ell_t \ge 1$:
\begin{equation} \label{eq:47}
    \partial_t^{\ell_t} P_{j_{\bx},\infty}w(\bx,0) = \sum_{i=0}^{\lfloor \frac{\ell_t-1}{2} \rfloor} (-1)^i \mathcal{L}_{j_{\bx}}^i \Pi_{j_{\bx}}^{L^2_{\bx}} \partial_t^{\ell_t-2i} w(\bx,0).
\end{equation}
Here, $\Pi_{j_{\bx}}^{L^2_{\bx}} : L^2(\Omega) \to\Vx$ is the $L^2(\Omega)$ projection, which is defined by
\begin{equation} \label{eq:48}
    ( (\Pi_{j_{\bx}}^{L^2_{\bx}}- \Id^{\bx}) w, w_{j_{\bx}})_{L^2(\Omega)} = 0 \qquad \text{for all~} w_{\jx} \in \Vx.
\end{equation}
\end{lemma}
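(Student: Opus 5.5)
The plan is to reduce $P_{j_{\bx},\infty}w$ to a finite system of decoupled scalar ODEs, as in the proof of Proposition~\ref{prop:1}. I would then obtain \eqref{eq:45} from an energy estimate, \eqref{eq:46} by reading $\mathcal{L}_{j_{\bx}}$ off the equation, and \eqref{eq:47} by recursion at $t=0$.

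\textbf{Step 1: the semidiscrete equation.} Take the basis $\{\psi^j_{j_{\bx}}\}$ of $\Vx$ that is $L^2(\Omega)$-orthonormal and $(c^2\nabla_{\bx}\cdot,\nabla_{\bx}\cdot)$-orthogonal. It diagonalizes $\mathcal{L}_{j_{\bx}}$ with eigenvalues $\mu^j_{j_{\bx}}$. By \eqref{eq:11}, $(c^2\nabla_{\bx} w(\cdot,t),\nabla_{\bx}\psi^j_{j_{\bx}})_{L^2(\Omega)}=0$ for every $j$. So the source $f^j_{j_{\bx}}$ in the proof of Proposition~\ref{prop:1} reduces to $(\partial_t^2 w(\cdot,t),\psi^j_{j_{\bx}})_{L^2(\Omega)}$. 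Writing $W:=P_{j_{\bx},\infty}w$, this means $W$ solves the strong semidiscrete problem
\begin{equation*}
\partial_t^2 W+\mathcal{L}_{j_{\bx}}W=\Pi^{L^2_{\bx}}_{j_{\bx}}\partial_t^2 w,\qquad W(\cdot,0)=0,\qquad \partial_tW(\cdot,0)=\Pi^{L^2_{\bx}}_{j_{\bx}}\partial_t w(\cdot,0).
\end{equation*}
Proposition~\ref{prop:1}~ii), applied with $s_t+3$, gives enough temporal regularity of $W$. We may therefore differentiate this equation $k$ times in $t$, for $k\le s_t+1$. Since $\mathcal{L}_{j_{\bx}}$ and $\Pi^{L^2_{\bx}}_{j_{\bx}}$ commute with $\partial_t$, the function $V_k:=\partial_t^kW$ satisfies $\partial_t^2V_k+\mathcal{L}_{j_{\bx}}V_k=\Pi^{L^2_{\bx}}_{j_{\bx}}\partial_t^{k+2}w$.

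\textbf{Step 2: proof of \eqref{eq:45}.} Take $k=s_t-1$, test with $\partial_tV_k$ in $L^2(\Omega)$, and use \eqref{eq:44}. The energy $E(t):=\|\partial_tV_k(\cdot,t)\|^2_{L^2(\Omega)}+\|c\nabla_{\bx}V_k(\cdot,t)\|^2_{L^2(\Omega)}$ then satisfies
\begin{equation*}
E'(t)\le 2\|\partial_t^{s_t+1}w(\cdot,t)\|_{L^2(\Omega)}\sqrt{E(t)},
\end{equation*}
because the $L^2$ projection is stable. Integrating gives
\begin{equation*}
\sqrt{E(t)}\le\sqrt{E(0)}+\int_0^t\|\partial_t^{s_t+1}w\|_{L^2(\Omega)}\,\dd s.
\end{equation*}
Squaring, integrating over $(0,T)$ and applying Cauchy--Schwarz yields \eqref{eq:45}, with a constant depending only on $T$. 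The term $\sqrt{E(0)}$ produces exactly the two initial terms in \eqref{eq:45}.

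\textbf{Step 3: proof of \eqref{eq:46}.} Take $k=s_t$ in the differentiated equation to get
\begin{equation*}
\mathcal{L}_{j_{\bx}}\partial_t^{s_t}W=\Pi^{L^2_{\bx}}_{j_{\bx}}\partial_t^{s_t+2}w-\partial_t^{s_t+2}W.
\end{equation*}
The first term is bounded by $\|\partial_t^{s_t+2}w\|_{L^2(Q_T)}$. For the second, apply \eqref{eq:45} with $s_t$ replaced by $s_t+2$. This produces $\|\partial_t^{s_t+3}w\|_{L^2(Q_T)}$ together with the initial terms at orders $s_t+2$ and $s_t+1$, which is exactly \eqref{eq:46}.

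\textbf{Step 4: proof of \eqref{eq:47}.} Evaluate the $(\ell_t-2)$-times differentiated equation at $t=0$:
\begin{equation*}
\partial_t^{\ell_t}W(\cdot,0)=\Pi^{L^2_{\bx}}_{j_{\bx}}\partial_t^{\ell_t}w(\cdot,0)-\mathcal{L}_{j_{\bx}}\partial_t^{\ell_t-2}W(\cdot,0).
\end{equation*}
The base cases are $\ell_t=1$, which is the initial condition, and $\ell_t=2$, where $W(\cdot,0)=0$ gives $\partial_t^2W(\cdot,0)=\Pi^{L^2_{\bx}}_{j_{\bx}}\partial_t^2 w(\cdot,0)$. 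Unrolling the recursion by induction gives the alternating sum, which terminates when the order drops to $1$ or $2$; this yields the upper index $\lfloor(\ell_t-1)/2\rfloor$.

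The main obstacle is the rigorous justification in Step 1. Two points need care there. First, the temporal regularity from Proposition~\ref{prop:1} must be sufficient for the time derivatives and traces at $t=0$ to make sense in $L^2(\Omega)$. Second, the Galerkin orthogonality \eqref{eq:11} must hold pointwise in $t$, which is what removes the stiffness term from the source. The remaining steps are standard energy arguments in the finite-dimensional space $\Vx$.
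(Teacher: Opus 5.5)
Your proposal is correct and follows essentially the same route as the paper. The common steps are: a strong semidiscrete equation in $\Vx$ whose stiffness source term vanishes by \eqref{eq:11}; an energy estimate for $\partial_t^{s_t-1}P_{j_{\bx},\infty}w$ giving \eqref{eq:45}; reading $\mathcal{L}_{j_{\bx}}\partial_t^{s_t}P_{j_{\bx},\infty}w$ off the $s_t$-times differentiated equation and reusing \eqref{eq:45} at order $s_t+2$ for \eqref{eq:46}; and the recursion at $t=0$ for \eqref{eq:47}. The only minor difference is how you get the strong equation and the initial condition $\partial_t P_{j_{\bx},\infty}w(\cdot,0)=\Pi^{L^2_{\bx}}_{j_{\bx}}\partial_t w(\cdot,0)$: you take them from the decoupled ODE system in the proof of Proposition~\ref{prop:1}, whereas the paper derives them from the variational identity \eqref{eq:49} by testing with $\eta'(0)=0$ plus a density argument, and then with $\eta'(0)=1$.
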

\begin{proof}
Recalling~\eqref{eq:27}, we have that $P_{j_{\bx},\infty} w \in V_{j_{\bx}}^{\bx}(\Omega) \otimes H^2_{0,\bullet}(0,T)$ satisfies
\begin{equation} \label{eq:49}
\begin{aligned}
    & (\partial_t^2 P_{j_{\bx},\infty} w, \partial_t v_{j_{\bx}})_{L_e^2(Q_T)}  + (\partial_t P_{j_{\bx},\infty} w(\cdot,0), \partial_t v_{j_{\bx}}(\cdot,0))_{L^2(\Omega)} 
    + (c^2\nabla_{\bx} P_{j_{\bx},\infty} w, \nabla_{\bx} \partial_t v_{j_{\bx}})_{L_e^2(Q_T)}
    \\ & \quad = (\partial_t^2 w, \partial_t v_{j_{\bx}})_{L_e^2(Q_T)} + (\partial_t w(\cdot,0), \partial_t v_{j_{\bx}}(\cdot,0))_{L^2(\Omega)}
\end{aligned}
\end{equation}
for all $v_{j_{\bx}} \in V_{j_{\bx}}^{\bx}(\Omega) \otimes H^2_{0,\bullet}(0,T)$, since $(c^2\nabla_{\bx} w, \nabla_{\bx} \partial_t v_{j_{\bx}} )_{L^2_e(Q_T)} = 0$ due to \eqref{eq:11}.

\noindent
Consider testing \eqref{eq:49} with $v_{j_{\bx}}(\bx,t) = \phi_{j_{\bx}}(\bx)\eta(t)$, where $\phi_{j_{\bx}}\in\Vx$ and $\eta \in H^2_{0,\bullet}(0,T)$ satisfies $\eta'(0)=0$. This yields
\begin{equation*}
\begin{aligned}
     \int_0^T e^{-t/T} \eta'(t) \Big((\partial_t^2 P_{j_{\bx},\infty} w(\cdot,t), \phi_{j_{\bx}})_{L^2(\Omega)}   + (c^2\nabla_{\bx} P_{j_{\bx},\infty} w(\cdot,t), \nabla_{\bx} \phi_{j_{\bx}})_{L^2(\Omega)} - (\partial_t^2 w(\cdot,t), \phi_{j_{\bx}})_{L^2(\Omega)}\Big) \dd t = 0.
\end{aligned}
\end{equation*}
Since $H^1_{0,\bullet}(0,T)$ is dense in $L^2(0,T)$, we deduce that, for all $\phi_{j_{\bx}} \in V_{j_{\bx}}^{\bx}(\Omega)$, it holds, for any~$t\in(0,T)$,
\begin{equation} \label{eq:50}
\begin{aligned}
     (\partial_t^2 P_{j_{\bx},\infty} w(\cdot,t), \phi_{j_{\bx}})_{L^2(\Omega)} + (c^2\nabla_{\bx} P_{j_{\bx},\infty} w(\cdot,t), \nabla_{\bx} \phi_{j_{\bx}})_{L^2(\Omega)} = (\partial_t^2 w(\cdot,t),  \phi_{j_{\bx}})_{L^2(\Omega)}.
\end{aligned}
\end{equation}
Now, testing~\eqref{eq:49} with $v_{j_{\bx}}(\bx,t) = \phi_{j_{\bx}}(\bx)\eta(t)$, where $\phi_{j_{\bx}}\in\Vx$ and $\eta \in H^2_{0,\bullet}(0,T)$ satisfies $\eta'(0)=1$, and subtracting \eqref{eq:50}, we find
\begin{equation*}
     (\partial_t P_{j_{\bx},\infty} w(\cdot,0), \phi_{j_{\bx}})_{L^2(\Omega)} = (\partial_t w(\cdot,0), \phi_{j_{\bx}})_{L^2(\Omega)} \quad \text{for all~} \phi_{j_{\bx}} \in V_{j_{\bx}}^{\bx}(\Omega),
\end{equation*}
which is equivalent to
\begin{equation} \label{eq:51}
    \partial_t P_{j_{\bx},\infty} w(\cdot,0) = \Pi_{j_{\bx}}^{L^2_{\bx}} \partial_t w(\cdot,0).
\end{equation}
Differentiating \eqref{eq:50} $s_t-1$ times with respect to time, we obtain, for any~$t\in(0,T)$,
\begin{equation} \label{eq:52}
    (\partial_t^{s_t+1} P_{j_{\bx},\infty} w(\cdot,t), \phi_{j_{\bx}})_{L^2(\Omega)} + (c^2 \nabla_{\bx} \partial_t^{s_t-1} P_{j_{\bx},\infty} w(\cdot,t), \nabla_{\bx} \phi_{j_{\bx}})_{L^2(\Omega)} = (\partial_t^{s_t+1} w(\cdot,t), \phi_{j_{\bx}})_{L^2(\Omega)}
\end{equation}
for all $\phi_{j_{\bx}} \in V_{j_{\bx}}^{\bx}(\Omega)$. Note that part \textit{ii)} of Proposition \ref{prop:1}  implies that $P_{j_{\bx},\infty} w \in V_{j_{\bx}}^{\bx}(\Omega) \otimes H_{0,\bullet}^{s_t+3}(0,T)$. 

\medskip
\noindent 
In this part of the proof, we denote the time variable by~$\tau \in (0,T)$, since below we integrate with respect to time up to the upper limit~$t$. We define the energy of~$\partial_{\tau}^{s_t-1}P_{j_{\bx},\infty} w$ by
\begin{equation*}
    E_{s_t}(\tau) :=\frac12\|\partial_{\tau}^{s_t} P_{j_{\bx},\infty} w(\cdot,\tau)\|_{L^2(\Omega)}^2 + \frac12\|c \nabla_{\bx} \partial_{\tau}^{s_t-1} P_{j_{\bx},\infty} w (\cdot, \tau)\|_{L^2(\Omega)}^2.
\end{equation*}
Choosing the test function $\phi_{j_{\bx}} = \partial_{\tau}^{s_t} P_{j_{\bx},\infty} w(\cdot, \tau)$ in \eqref{eq:52} and applying the Cauchy--Schwarz inequality yields
\begin{equation*}
    E_{s_t}'(\tau) = (\partial_{\tau}^{s_t+1} w(\cdot,\tau), \partial_{\tau}^{s_t} P_{j_{\bx},\infty} w(\cdot,\tau))_{L^2(\Omega)} \le \|\partial_{\tau}^{s_t+1} w(\cdot, \tau)\|_{L^2(\Omega)} \sqrt{2 E_{s_t}(\tau)}.
\end{equation*}
Observing that $E_{s_t}'(\tau) = 2\sqrt{E_{s_t}(\tau)} (\sqrt{E_{s_t}(\tau)})'$, and considering the nontrivial case $E_{s_t}(\tau) > 0$, we conclude
\begin{equation*}
  (\sqrt{E_{s_t}(\tau)})' \le \frac1{\sqrt2}\|\partial_{\tau}^{s_t+1} w(\cdot,\tau)\|_{L^2(\Omega)}.
\end{equation*}
Integrating over $(0,t)$ for any~$t\in (0,T)$ gives
\begin{equation*}
    \frac1{\sqrt2} \|\partial_t^{s_t} P_{j_{\bx},\infty} w(\cdot, t)\|_{L^2(\Omega)} \le \sqrt{E_{s_t}(t)} \le \sqrt{E_{s_t}(0)} + \frac1{\sqrt2}\int_0^t \|\partial_{\tau}^{s_t+1} w(\cdot,\tau)\|_{L^2(\Omega)} \dd \tau,
\end{equation*}
which implies~\eqref{eq:45}. 

\medskip
\noindent
In order to prove~\eqref{eq:46}, differentiating~\eqref{eq:50} $s_t$ times with respect to time and using~\eqref{eq:44} give
\begin{equation*}
    (\partial_t^{s_t+2} P_{j_{\bx},\infty} w(\cdot,t), \phi_{j_{\bx}})_{L^2(\Omega)} + (\mathcal{L}_{j_{\bx}} \partial_t^{s_t} P_{j_{\bx},\infty} w(\cdot,t), \phi_{j_{\bx}})_{L^2(\Omega)} = (\partial_t^{s_t+2} w(\cdot,t), \phi_{j_{\bx}})_{L^2(\Omega)}
\end{equation*}
for all $\phi_{j_{\bx}} \in V_{j_{\bx}}^{\bx}(\Omega)$. From this, we obtain that, for all $\bx \in \Omega$ and~$t\in(0,T)$,
\begin{equation} \label{eq:53}
\begin{aligned}
     \partial_t^{s_t+2} P_{j_{\bx},\infty} w(\bx,t) + \mathcal{L}_{j_{\bx}} \partial_t^{s_t} P_{j_{\bx},\infty} w(\bx,t) = \Pi_{j_{\bx}}^{L^2_{\bx}} \partial_t^{s_t+2} w(\bx,t),
\end{aligned}
\end{equation}
with $\Pi_{j_{\bx}}^{L^2_{\bx}}$ as in \eqref{eq:48}. Using the stability of $\Pi_{j_{\bx}}^{L^2_{\bx}}$ in $L^2(\Omega)$ and \eqref{eq:45} (with $s_t \to s_t+2$), we conclude
\begin{equation*}
\begin{aligned}
    \| \mathcal{L}_{j_{\bx}} \partial_t^{s_t} P_{j_{\bx},\infty} w\|_{L^2(Q_T)} & \le \| \partial_t^{s_t+2} P_{j_{\bx},\infty} w \|_{L^2(Q_T)} + \| \partial_t^{s_t+2} w\|_{L^2(Q_T)}
    \\ & \lesssim \sqrt{E_{s_t+2}(0)} + \| \partial_t^{s_t+3} w \|_{L^2(Q_T)} + \| \partial_t^{s_t+2} w \|_{L^2(Q_T)},
\end{aligned}
\end{equation*}
which leads to \eqref{eq:46}.

\medskip 
\noindent 
Finally, using~\eqref{eq:51} and~\eqref{eq:53}, which is also valid with~$s_t=0$, and a recursive argument, we obtain, for all $\bx \in \Omega$, 
\begin{align*}
\begin{cases}
    P_{j_{\bx},\infty}w(\bx,0) = 0, & \vspace{0.1cm}
    \\ \partial_t P_{j_{\bx},\infty}w(\bx,0) = \Pi_{j_{\bx}}^{L^2_{\bx}} \partial_t w(\bx,0), & \vspace{0.1cm}
    \\ \partial_t^2 P_{j_{\bx},\infty}w(\bx,0) = \Pi_{j_{\bx}}^{L^2_{\bx}} \partial_t^2 w(\bx,0), & \vspace{0.1cm}
    \\ \partial_t^3 P_{j_{\bx},\infty}w(\bx,0) = \Pi_{j_{\bx}}^{L^2_{\bx}} \partial_t^3 w(\bx,0) - \mathcal{L}_{j_{\bx}} \partial_t P_{j_{\bx},\infty} w(\bx,0) = \Pi_{j_{\bx}}^{L^2_{\bx}} \partial_t^3 w(\bx,0) - \mathcal{L}_{j_{\bx}} \Pi_{j_{\bx}}^{L^2_{\bx}} \partial_t w(\bx,0), &  \vspace{0.1cm}
    \\ \partial_t^4 P_{j_{\bx},\infty}w(\bx,0) = \Pi_{j_{\bx}}^{L^2_{\bx}} \partial_t^4 w(\bx,0) - \mathcal{L}_{j_{\bx}} \partial_t^2 P_{j_{\bx},\infty} w(\bx,0) = \Pi_{j_{\bx}}^{L^2_{\bx}} \partial_t^4 w(\bx,0) - \mathcal{L}_{j_{\bx}} \Pi_{j_{\bx}}^{L^2_{\bx}} \partial_t^2 w(\bx,0), & \vspace{0.1cm}
    \\ \cdots & \vspace{0.1cm}
    \\ \partial_t^{\ell_t} P_{j_{\bx},\infty}w(\bx,0) = \Pi_{j_{\bx}}^{L^2_{\bx}} \partial_t^{\ell_t} w(\bx,0) - \mathcal{L}_{j_{\bx}} \partial_t^{\ell_t-2} P_{j_{\bx},\infty} w(\bx,0), &
    \end{cases}
\end{align*}
which, by induction, yields \eqref{eq:47}.
\end{proof}
\noindent
An immediate consequence of Lemma \ref{lem:4} is the following result.
\begin{corollary}\label{cor:2}
Let $u \in H^1_0(\Omega) \otimes H^{s_t+3}(0,T)$ with $ s_t \ge 1$, and define $w := ((\Id^{\bx}-\Pi^{\nabla_{\bx}}_{j_{\bx}})\otimes \Id^t) u$. Then, the following estimates hold true for all $\jx\ge 0$:
\begin{align} \label{eq:54}
    \| P_{j_{\bx},\infty} w \|_{L^2(\Omega) \otimes H^{s_t}(0,T)} & \lesssim \| w \|_{L^2(\Omega) \otimes H^{s_t+1}(0,T)}, \\ \label{eq:55} \| \mathcal{L}_{j_{\bx}} P_{j_{\bx},\infty} w \|_{L^2(\Omega) \otimes H^{s_t}(0,T)} & \lesssim \| w \|_{L^2(\Omega) \otimes H^{s_t+3}(0,T)},
\end{align}
provided that $\partial_t^{\ell_t} w(\bx,0)=0$ for all $\bx \in \Omega$ and $\ell_t=1,\ldots,s_t$.
\end{corollary}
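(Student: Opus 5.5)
This is a direct consequence of Lemma~\ref{lem:4}. The plan is to bound each time-derivative order separately and then sum. The $L^2(\Omega)\otimes H^{s_t}(0,T)$ norm is equivalent to the sum over $k=0,\dots,s_t$ of $\|\partial_t^k\cdot\|_{L^2(Q_T)}$. Since $P_{j_{\bx},\infty}w(\cdot,0)=0$, a Poincar\'e-type inequality in time further reduces the full norm to the derivatives $k=1,\dots,s_t$.

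For each $1\le k\le s_t$, I apply \eqref{eq:45} with $s_t\to k$. This bounds $\|\partial_t^k P_{j_{\bx},\infty}w\|_{L^2(Q_T)}$ by
\begin{itemize}
\item the volume term $\|\partial_t^{k+1}w\|_{L^2(Q_T)}$, and
\item the initial-energy terms $\|\partial_t^{k}P_{j_{\bx},\infty}w(\cdot,0)\|_{L^2(\Omega)}$ and $\|c\nabla_{\bx}\partial_t^{k-1}P_{j_{\bx},\infty}w(\cdot,0)\|_{L^2(\Omega)}$.
\end{itemize}
The key step is to show that these initial-energy terms vanish under the hypothesis $\partial_t^{\ell_t}w(\cdot,0)=0$ for $\ell_t=1,\dots,s_t$. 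By identity \eqref{eq:47}, $\partial_t^{\ell}P_{j_{\bx},\infty}w(\cdot,0)$ is a linear combination of $\mathcal{L}^i_{j_{\bx}}\Pi^{L^2_{\bx}}_{j_{\bx}}\partial_t^{\ell-2i}w(\cdot,0)$ with $\ell-2i\ge1$. Hence it vanishes for every $1\le \ell\le s_t$. For $\ell=0$ it vanishes because $P_{j_{\bx},\infty}w\in\Vx\otimes H^2_{0,\bullet}(0,T)$. This kills both initial terms for all $k\le s_t$, and summing over $k$ gives \eqref{eq:54}.

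For \eqref{eq:55}, I proceed the same way with \eqref{eq:46} applied for each $k\le s_t$. This gives bounds by $\|\partial_t^{k+2}w\|+\|\partial_t^{k+3}w\|$, which are controlled by $\|w\|_{L^2(\Omega)\otimes H^{s_t+3}(0,T)}$. There are two remaining issues.

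\textbf{Initial terms at orders $k+2$ and $k+1$.} Here \eqref{eq:46} involves initial data at higher orders than the hypothesis covers, so they do not simply vanish. I would rewrite them via \eqref{eq:47}. They are combinations of $\mathcal{L}_{j_{\bx}}^i\Pi^{L^2_{\bx}}_{j_{\bx}}\partial_t^{\ell-2i}w(\cdot,0)$, and the terms with $\ell-2i\le s_t$ drop. The surviving terms involve only $\partial_t^{m}w(\cdot,0)$ with $s_t<m\le s_t+2$. These are not zero. I would bound them by trace inequalities in time, $\|\partial_t^m w(\cdot,0)\|\lesssim\|w\|_{L^2(\Omega)\otimes H^{m+1}(0,T)}$.

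\textbf{Positive powers of $\mathcal{L}_{j_{\bx}}$.} The remaining difficulty is the factors $\mathcal{L}_{j_{\bx}}^i$, and I expect this to be the main obstacle. Discrete operators are not uniformly bounded, so such factors must be avoided or handled through the structure of $w$. For the relevant indices only $i\le1$ arises in the surviving terms, and only in the gradient term $c\nabla_{\bx}\partial_t^{k+1}P_{j_{\bx},\infty}w(\cdot,0)$. My approach there is an energy argument: test \eqref{eq:53}-type relations to convert $\mathcal{L}_{j_{\bx}}$ to a gradient. Alternatively, I would exploit that $w=(\Id^{\bx}-\Pi^{\nabla_{\bx}}_{j_{\bx}})u$ is Ritz-orthogonal, so that $\mathcal{L}_{j_{\bx}}\Pi^{L^2_{\bx}}_{j_{\bx}}$ acting on $w$-derivatives is controlled. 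If this fails, I would fall back on interpreting the corollary with the hypothesis covering $\ell_t\le s_t+2$, as used in Theorem~\ref{th:1} via \eqref{eq:24}, where all initial terms vanish outright.

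The $\mathcal{L}_{j_{\bx}}$ factor commutes with $\partial_t$, so the summation over $k$ and the Poincar\'e step then conclude as before.
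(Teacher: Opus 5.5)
Your treatment of \eqref{eq:54} is correct. It is exactly what the paper's one-line ``immediate consequence of Lemma~\ref{lem:4}'' amounts to: apply \eqref{eq:45} with $s_t\to k$ for $k=1,\dots,s_t$, kill both initial terms with \eqref{eq:47} and the hypothesis, and use Poincar\'e in time for $k=0$.

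For \eqref{eq:55}, however, there is a genuine gap, which you located but misdiagnosed. In \eqref{eq:46} with $s_t\to k$, every summand of \eqref{eq:47} with $i\ge1$ contains some $\partial_t^{m}w(\cdot,0)$ with $m\le s_t$, so it vanishes. Hence no positive power of $\mathcal{L}_{j_{\bx}}$ survives. Your trace bound handles the $L^2$ initial terms correctly. The only obstruction is the single term $\|c\nabla_{\bx}\Pi^{L^2_{\bx}}_{j_{\bx}}\partial_t^{s_t+1}w(\cdot,0)\|_{L^2(\Omega)}$ at $k=s_t$, i.e.\ $\mathcal{L}_{j_{\bx}}^{1/2}$ applied to an $L^2$-projection. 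Neither an energy argument nor Ritz orthogonality can bound it uniformly in $j_{\bx}$ by $\|w\|_{L^2(\Omega)\otimes H^{s_t+3}(0,T)}$, because \eqref{eq:55} is false under the stated hypothesis. Here is a counterexample.
\begin{itemize}
\item Take $\Omega=(0,1)$, $c=1$, $T=1$, and continuous piecewise linears on a uniform mesh of size $h=1/N$, so $\Pi^{\nabla_{\bx}}_{j_{\bx}}$ is nodal interpolation $I_h$. Take $s_t=1$ and $u=t^2x(1-x)$.
\item Then $w=t^2e$ with $e=x(1-x)-I_h[x(1-x)]$, and $\partial_tw(\cdot,0)=0$, so the hypothesis holds.
\item By \eqref{eq:50}--\eqref{eq:51}, $P_{j_{\bx},\infty}w=2\mathcal{L}_{j_{\bx}}^{-1}(\Id-\cos(t\mathcal{L}_{j_{\bx}}^{1/2}))\Pi^{L^2_{\bx}}_{j_{\bx}}e$. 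Hence $\partial_t\mathcal{L}_{j_{\bx}}P_{j_{\bx},\infty}w=2\sin(t\mathcal{L}_{j_{\bx}}^{1/2})\mathcal{L}_{j_{\bx}}^{1/2}\Pi^{L^2_{\bx}}_{j_{\bx}}e$.
\item All discrete eigenvalues are $\ge\pi^2$, so this has $L^2(Q_T)$ norm $\gtrsim\|\nabla_{\bx}\Pi^{L^2_{\bx}}_{j_{\bx}}e\|_{L^2(\Omega)}$.
\item The nodal values of $\Pi^{L^2_{\bx}}_{j_{\bx}}e$ solve $\alpha_{k-1}+4\alpha_k+\alpha_{k+1}=h^2$ with $\alpha_0=\alpha_N=0$. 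Thus $\alpha_k\approx\frac{h^2}{6}(1-\rho^k-\rho^{N-k})$ with $\rho=\sqrt3-2$.
\item This is a boundary layer of slope $\simeq h$ over $O(1)$ elements, so $\|\nabla_{\bx}\Pi^{L^2_{\bx}}_{j_{\bx}}e\|\simeq h^{3/2}$. Meanwhile $\|w\|_{L^2(\Omega)\otimes H^4(0,T)}\simeq\|e\|_{L^2(\Omega)}\simeq h^2$, so the ratio blows up like $h^{-1/2}$.
\end{itemize}

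Your fallback is the right repair. In fact, assuming $\partial_t^{\ell_t}w(\cdot,0)=0$ up to $\ell_t=s_t+1$ already suffices. The gradient term then vanishes, and $\|\Pi^{L^2_{\bx}}_{j_{\bx}}\partial_t^{s_t+2}w(\cdot,0)\|$ is controlled by a trace of $w\in L^2(\Omega)\otimes H^{s_t+3}(0,T)$. But this proves a different statement, and your justification for it is wrong. Proposition~\ref{prop:7} applies \eqref{eq:55} with $s_t=p_t+3$, while \eqref{eq:24} only supplies $\ell_t\le p_t+3=s_t$. So Theorem~\ref{th:1} does not provide the strengthened hypothesis; it would need \eqref{eq:24} also for $\ell_t=p_t+4$. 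The paper's ``immediate consequence'' passes over exactly the same initial terms in \eqref{eq:46}, so this is a defect in the statement itself, not only in your proposal. As submitted, though, your proof of \eqref{eq:55} does not close.
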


\subsection{Convergence of Galerkin increments} \label{sec:gal}

Preliminarily, we estimate the error between the space--time discrete Galerkin projection $P_{j_\bx,j_t}$ and the extensions to space--time functions of the projectors $\Pi_{j_\bx}^{\nabla_\bx}$ and $\Pi_{j_t}^{\partial_t^2}$ defined in \eqref{eq:11} and \eqref{eq:12}, applied to the semidiscrete approximations $P_{\infty,j_t}u$ and $P_{j_\bx,\infty}u$ defined in~\eqref{eq:26} and~\eqref{eq:27}. Then, in Proposition~\ref{prop:6}, we estimate the difference of the Galerkin projections at two consecutive refinement levels in space and in time.
\begin{lemma} 
For all $z \in H^{p_{\bx}+1}(\Omega) \otimes H^2_{0,\bullet}(0,T)$, under assumption~\eqref{eq:22}, it holds
\begin{equation} \label{eq:56}
    \| (P_{j_{\bx},j_t} -\Pi_{j_{\bx}}^{\nabla_{\bx}} \otimes \Id^t) P_{\infty,j_t}z\|_{\mathcal{V}(Q_T)}\lesssim 2^{-\jx (p_{\bx}+1)} \| \partial_t \mathcal{B}_{j_t} P_{\infty,j_t}z \|_{H^{p_{\bx}+1}(\Omega)\otimes L^2(0,T)}.
\end{equation}
For all $w \in H^1_0(\Omega) \otimes H^{p_t+3}(0,T)$ it holds 
\begin{equation} \label{eq:57}
    \| (P_{j_{\bx},j_t} - \Id^{\bx} \otimes \Pi_{j_t}^{\partial_t^2}) P_{j_{\bx},\infty}w\|_{\mathcal{V}(Q_T)} \lesssim 2^{-j_t (p_t+1)} \| \mathcal{L}_{j_{\bx}} P_{j_{\bx},\infty}w \|_{L^2(\Omega)\otimes H^{p_t+3}(0,T)}.
\end{equation}
\end{lemma}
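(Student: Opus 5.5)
The plan is to mimic the proof of the key intermediate estimate \eqref{eq:17} (\cite[Lemma 4.1]{FerrariPerugia2026}), with $u$ replaced by the semidiscrete function $y:=P_{\infty,j_t}z$ (resp.\ $y:=P_{j_{\bx},\infty}w$). The idea is that one of the two projectors acts trivially on $y$, so only one consistency term survives.

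For \eqref{eq:56}, set $y:=P_{\infty,j_t}z\in H^1_0(\Omega)\otimes V_{j_t}^t(0,T)$. By Proposition~\ref{prop:1}~i) and \eqref{eq:22}, $y$ has the spatial regularity needed for the right-hand side. Let $e:=(P_{j_{\bx},j_t}-\Pi^{\nabla_{\bx}}_{j_{\bx}}\otimes\Id^t)y$. Since $y$ is discrete in time, $(\Pi^{\nabla_{\bx}}_{j_{\bx}}\otimes \Id^t)y\in V_{j_{\bx}}^{\bx}(\Omega)\otimes V_{j_t}^t(0,T)$, and hence $e$ lies in the discrete space. Coercivity \eqref{eq:8} gives
\[
\tfrac{1}{2e}\|e\|^2_{\mathcal{V}(Q_T)}\le \mathcal{A}(e,e)=\mathcal{A}(\eta,e),\qquad \eta:=((\Id^{\bx}-\Pi^{\nabla_{\bx}}_{j_{\bx}})\otimes\Id^t)y,
\]
where we used Galerkin orthogonality \eqref{eq:10}. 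In $\mathcal{A}(\eta,e)$ the stiffness term vanishes by \eqref{eq:11}. The remaining terms are $(\partial_t^2\eta,\partial_t e)_{L^2_e(Q_T)}+(\partial_t\eta(\cdot,0),\partial_t e(\cdot,0))_{L^2(\Omega)}$. Since both $\eta$ and $e$ are discrete in time (with values in $V_{j_t}^t$), the definition \eqref{eq:40} of $\mathcal{B}_{j_t}$ rewrites this sum as $(\partial_t\mathcal{B}_{j_t}\eta,\partial_t e)_{L^2_e(Q_T)}$. By the tensor-product commutativity, $\partial_t\mathcal{B}_{j_t}\eta=((\Id^{\bx}-\Pi^{\nabla_{\bx}}_{j_{\bx}})\otimes\Id^t)\partial_t\mathcal{B}_{j_t}y$. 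Cauchy--Schwarz then gives $\|e\|_{\mathcal{V}(Q_T)}\lesssim \sqrt{T}\,\|(\Id^{\bx}-\Pi^{\nabla_{\bx}}_{j_{\bx}})\partial_t\mathcal{B}_{j_t}y\|_{L^2(Q_T)}$, because $\|\partial_t e\|_{L^2}$ is controlled by $\sqrt T\|e\|_{\mathcal V}$. The $L^2$ estimate in \eqref{eq:13} then yields the factor $2^{-j_{\bx}(p_{\bx}+1)}$ with the $H^{p_{\bx}+1}(\Omega)\otimes L^2(0,T)$ norm.

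For \eqref{eq:57}, set $y:=P_{j_{\bx},\infty}w\in V_{j_{\bx}}^{\bx}(\Omega)\otimes H^2_{0,\bullet}(0,T)$, with the time regularity given by Proposition~\ref{prop:1}~ii). Let $\eta:=(\Id^{\bx}\otimes(\Id^t-\Pi^{\partial_t^2}_{j_t}))y$ and $e:=(P_{j_{\bx},j_t}-\Id^{\bx}\otimes\Pi^{\partial_t^2}_{j_t})y$, which again lies in the discrete space. Coercivity and Galerkin orthogonality give $\|e\|^2_{\mathcal V}\lesssim\mathcal{A}(\eta,e)$. Now the first two terms of $\mathcal{A}$ cancel exactly by \eqref{eq:12}, applied pointwise in $\bx$. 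The surviving term is $(c^2\nabla_{\bx}\eta,\nabla_{\bx}\partial_te)_{L^2_e(Q_T)}$. Because $\eta(\cdot,t)\in V_{j_{\bx}}^{\bx}(\Omega)$, \eqref{eq:44} turns it into $(\mathcal{L}_{j_{\bx}}\eta,\partial_te)_{L^2_e(Q_T)}$. Since $\mathcal{L}_{j_{\bx}}$ commutes with $\Id^{\bx}\otimes\Pi^{\partial_t^2}_{j_t}$, we get $\mathcal{L}_{j_{\bx}}\eta=(\Id^{\bx}\otimes(\Id^t-\Pi^{\partial_t^2}_{j_t}))\mathcal{L}_{j_{\bx}}y$. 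This bounds $\|e\|_{\mathcal V}$ by the $L^2(Q_T)$ norm of the temporal projection error of $\mathcal{L}_{j_{\bx}}y$. The $L^2$ bound in \eqref{eq:14} gives $2^{-j_t(p_t+1)}\|\mathcal{L}_{j_{\bx}}y\|_{L^2(\Omega)\otimes H^{p_t+3}(0,T)}$.

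The main point to check carefully is the reformulation in the first part, $(\partial_t^2\eta,\partial_te)_{L^2_e}+(\partial_t\eta(0),\partial_te(0))=(\partial_t\mathcal{B}_{j_t}\eta,\partial_te)_{L^2_e}$. This requires $\eta(\bx,\cdot)\in V_{j_t}^t$ for a.e.\ $\bx$, which holds because $\Pi^{\nabla_{\bx}}_{j_{\bx}}$ acts only in space on a function with values in $V_{j_t}^t$. It also requires that $\mathcal{B}_{j_t}$ commutes with the spatial projector, which holds by the tensor structure. Measurability and regularity issues, namely that $\partial_t\mathcal{B}_{j_t}y\in H^{p_{\bx}+1}(\Omega)\otimes L^2$, are covered by Proposition~\ref{prop:1} and, a posteriori, by Corollary~\ref{cor:1}.
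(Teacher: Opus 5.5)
Your proposal is correct and follows essentially the same route as the paper. Coercivity \eqref{eq:8} and Galerkin orthogonality reduce each bound to $\mathcal{A}(\eta,e)$. The defining relation of $\Pi^{\nabla_{\bx}}_{j_{\bx}}$ (resp.\ $\Pi^{\partial_t^2}_{j_t}$) removes one group of terms, and the surviving terms are rewritten via $\mathcal{B}_{j_t}$ (resp.\ $\mathcal{L}_{j_{\bx}}$), commuted with the projector, and bounded by Cauchy--Schwarz and the $L^2$ estimates in \eqref{eq:13} (resp.\ \eqref{eq:14}).
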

\begin{proof}
Using the coercivity in \eqref{eq:8} and consistency, we have
\begin{equation} \label{eq:58}
\begin{aligned}
    \| (P_{j_{\bx},j_t}-\Pi_{j_{\bx}}^{\nabla_{\bx}} \otimes \Id^t) P_{\infty,j_t}z \|^2_{\mathcal{V}(Q_T)} & \lesssim \A((P_{j_{\bx},j_t}-\Pi_{j_{\bx}}^{\nabla_{\bx}}  \otimes \Id^t) P_{\infty,j_t}z ,(P_{j_{\bx},j_t}-\Pi_{j_{\bx}}^{\nabla_{\bx}} \otimes \Id^t) P_{\infty,j_t}z )
   \\ &  = \A((\Id-\Pi_{j_{\bx}}^{\nabla_{\bx}}  \otimes \Id^t) P_{\infty,j_t}z, (P_{j_{\bx},j_t}-\Pi_{j_{\bx}}^{\nabla_{\bx}}  \otimes \Id^t) P_{\infty,j_t}z).
\end{aligned}
\end{equation}
Let us set $z_{j_{\bx},j_t} := (P_{j_{\bx},j_t}-\Pi_{j_{\bx}}^{\nabla_{\bx}} \otimes \Id^t) P_{\infty,j_t}z$, for convenience. Using the definition of $\Pi_{j_{\bx}}^{\nabla_{\bx}}$ in \eqref{eq:11}, we further obtain
\begin{equation} \label{eq:59}
\begin{aligned}
    \A((\Id-\Pi_{j_{\bx}}^{\nabla_{\bx}}  \otimes \Id^t) P_{\infty,j_t}z, z_{j_{\bx},j_t}) &  = ((\Id-\Pi_{j_{\bx}}^{\nabla_{\bx}}\otimes \Id^t)\partial_t^2 P_{\infty,j_t}z, \partial_t z_{j_{\bx},j_t} )_{L_e^2(Q_T)}
    \\ & \quad + ((\Id-\Pi_{j_{\bx}}^{\nabla_{\bx}}\otimes \Id^t) \partial_t P_{\infty,j_t}z(\cdot,0), \partial_t z_{j_{\bx},j_t}(\cdot,0))_{L^2(\Omega)}.
\end{aligned}
\end{equation}
Combining~\eqref{eq:58} and~\eqref{eq:59}, using the definition of the operator $\mathcal{B}_{j_t}$ in~\eqref{eq:40}, and applying the Cauchy--Schwarz inequality along with the approximation property~\eqref{eq:13} of $\Pi^{\nabla_{\bx}}_{j_{\bx}}$, we deduce
\begin{align*}
    \| z_{j_{\bx},j_t}\|^2_{\mathcal{V}(Q_T)} & \lesssim ((\Id-\Pi_{j_{\bx}}^{\nabla_{\bx}}\otimes \Id^t)\partial_t^2 P_{\infty,j_t}z, \partial_t z_{j_{\bx},j_t} )_{L_e^2(Q_T)}
    \\ & \quad + ((\Id-\Pi_{j_{\bx}}^{\nabla_{\bx}}\otimes \Id^t) \partial_t P_{\infty,j_t}z(\cdot,0), \partial_t z_{j_{\bx},j_t}(\cdot,0))_{L^2(\Omega)} 
    \\ & = (\partial_t \mathcal{B}_{j_t} (\Id-\Pi_{j_{\bx}}^{\nabla_{\bx}}\otimes \Id^t) P_{\infty,j_t}z, \partial_t z_{j_{\bx},j_t})_{L^2_e(Q_T)}
    \\ & \lesssim 
    \| (\Id-\Pi_{j_{\bx}}^{\nabla_{\bx}}\otimes \Id^t) \partial_t \mathcal{B}_{j_t}P_{\infty,j_t}z \|_{L^2_e(Q_T)}  \| z_{j_{\bx},j_t} \|_{\mathcal{V}(Q_T)}
    \\ & \lesssim 2^{-j_{\bx} (p_{\bx}+1)}\| \partial_t \mathcal{B}_{j_t} P_{\infty,j_t}z \|_{H^{p_{\bx}+1}(\Omega)\otimes L^2(0,T)} \| z_{j_{\bx},j_t} \|_{\mathcal{V}(Q_T)},
\end{align*}
which leads to \eqref{eq:56}.

\medskip
\noindent
To obtain \eqref{eq:57}, we proceed similarly. Using the coercivity in \eqref{eq:8} and consistency, we obtain
\begin{equation} \label{eq:60}
\begin{aligned}
    \| (P_{j_{\bx},j_t}-\Id^{\bx} \otimes \Pi_{j_t}^{\partial_t^2}) P_{j_{\bx},\infty}w \|^2_{\mathcal{V}(Q_T)} & \lesssim \A((P_{j_{\bx},j_t}- \Id^{\bx} \otimes \Pi_{j_t}^{\partial_t^2}) P_{j_{\bx},\infty}w ,(P_{j_{\bx},j_t}-\Id^{\bx} \otimes \Pi_{j_t}^{\partial_t^2}) P_{j_{\bx},\infty}w )
   \\ &  = \A((\Id-\Id^{\bx} \otimes \Pi_{j_t}^{\partial_t^2}) P_{j_{\bx},\infty}w, (P_{j_{\bx},j_t}-\Id^{\bx} \otimes \Pi_{j_t}^{\partial_t^2}) P_{j_{\bx},\infty}w).
\end{aligned}
\end{equation}
Let us set $w_{j_{\bx},j_t} := (P_{j_{\bx},j_t}-\Id^{\bx} \otimes \Pi_{j_t}^{\partial_t^2}) P_{j_{\bx},\infty}w$. Using the definition of $\Pi_{j_t}^{\partial_t^2}$ in~\eqref{eq:12} 
we further deduce
\begin{equation} \label{eq:61}
    \A((\Id-\Id^{\bx} \otimes \Pi_{j_t}^{\partial_t^2}) P_{j_{\bx},\infty}w, w_{j_{\bx},j_t}) = 
    (c^2 \nabla_{\bx} (\Id - \Id^{\bx} \otimes \Pi_{j_t}^{\partial_t^2}) P_{j_{\bx},\infty}w, \partial_t \nabla_{\bx}w_{j_{\bx},j_t})_{L^2_e(Q_T)}.
\end{equation}
Combine~\eqref{eq:60} and~\eqref{eq:61}, use the definition of the discrete operator~$\mathcal{L}_{j_{\bx}}$ in~\eqref{eq:44}, and apply the  Cauchy--Schwarz inequality together with the approximation property~\eqref{eq:14} of $\Pi_{j_t}^{\partial_t^2}$, to obtain
\begin{align*}
    \| w_{j_{\bx},j_t}\|^2_{\mathcal{V}(Q_T)} & \lesssim (c^2 \nabla_{\bx} (\Id - \Id^{\bx} \otimes \Pi_{j_t}^{\partial_t^2}) P_{j_{\bx},\infty}w, \partial_t \nabla_{\bx}w_{j_{\bx},j_t})_{L^2_e(Q_T)}
    \\ & = (\mathcal{L}_{j_{\bx}} (\Id - \Id^{\bx} \otimes \Pi_{j_t}^{\partial_t^2}) P_{j_{\bx},\infty}w, \partial_t w_{j_{\bx},j_t})_{L^2_e(Q_T)}
    \\ & \lesssim \| (\Id - \Id^{\bx} \otimes \Pi_{j_t}^{\partial_t^2}) \mathcal{L}_{j_{\bx}} P_{j_{\bx},\infty}w \|_{L^2_e(Q_T)}  \| w_{j_{\bx},j_t} \|_{\mathcal{V}(Q_T)} 
    \\ & \lesssim 2^{-j_t (p_t+1)} \| \mathcal{L}_{j_{\bx}} P_{j_{\bx},\infty}w \|_{L^2(\Omega)\otimes H^{p_t+3}(0,T)} \| w_{j_{\bx},j_t} \|_{\mathcal{V}(Q_T)},
\end{align*}
which concludes the proof of~\eqref{eq:57}.
\end{proof}
\begin{proposition}\label{prop:6}
For all $z \in H^{p_{\bx}+1}(\Omega) \otimes H^2_{0,\bullet}(0,T)$, under assumption~\eqref{eq:22}, it holds
\begin{align} \label{eq:62}
    \| (P_{j_{\bx},j_t} - P_{j_{\bx}-1,j_t})z \|_{L^2(Q_T)} & \lesssim 2^{-j_{\bx} (p_{\bx}+1)} \big(\|P_{\infty,j_t}z\|_{H^{p_{\bx}+1}(\Omega)\otimes L^2(0,T)}+ \| \partial_t \mathcal{B}_{j_t} P_{\infty,j_t}z \|_{H^{p_{\bx}+1}(\Omega)\otimes L^2(0,T)}\big).
\end{align}
For all $w \in H^1_0(\Omega) \otimes H^{p_t+3}(0,T)$, it holds 
\begin{equation} \label{eq:63}
\begin{aligned} 
     \| (P_{j_{\bx},j_t} - P_{j_{\bx},j_t-1})w\|_{L^2(Q_T)} \lesssim 2^{-j_t (p_t+1)} \big(& \|P_{j_{\bx},\infty}w\|_{L^2(\Omega)\otimes H^{p_t+3}(0,T)}
     \\ & \quad +\| \mathcal{L}_{j_{\bx}} P_{j_{\bx},\infty}w \|_{L^2(\Omega)\otimes H^{p_t+3}(0,T)}\big).
\end{aligned}
\end{equation}
\end{proposition}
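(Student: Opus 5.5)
The idea is to insert the semidiscrete solution as an intermediate quantity and then use the lemma just proved. For \eqref{eq:62}, the first observation is that the space--time Galerkin projections factor through the time semidiscretization: $P_{j_{\bx},j_t} P_{\infty,j_t} = P_{j_{\bx},j_t}$. This holds because $V_{j_{\bx}}^{\bx}(\Omega)\otimes V_{j_t}^t(0,T)\subset H^1_0(\Omega)\otimes V_{j_t}^t(0,T)$, so the Galerkin orthogonality \eqref{eq:26} gives $\mathcal{A}(P_{\infty,j_t}z - z, w_{j_{\bx},j_t})=0$ for every discrete test function. The same identity holds with $j_{\bx}-1$ in place of $j_{\bx}$, by nestedness. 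Writing $y:=P_{\infty,j_t}z$, we therefore get
\begin{equation*}
(P_{j_{\bx},j_t}-P_{j_{\bx}-1,j_t})z = \big(P_{j_{\bx},j_t}-\Pi^{\nabla_{\bx}}_{j_{\bx}}\otimes\Id^t\big)y - \big(P_{j_{\bx}-1,j_t}-\Pi^{\nabla_{\bx}}_{j_{\bx}-1}\otimes\Id^t\big)y + \big((\Pi^{\nabla_{\bx}}_{j_{\bx}}-\Pi^{\nabla_{\bx}}_{j_{\bx}-1})\otimes \Id^t\big) y.
\end{equation*}

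Each term is then estimated separately. For the first two terms, the Poincar\'e-type bound $\|v\|_{L^2(Q_T)}\lesssim \|v\|_{\mathcal V(Q_T)}$ applies, since $v(\cdot,0)=0$ and the norm contains $T^{-1}\|\partial_t v\|^2_{L^2(Q_T)}$. We then apply \eqref{eq:56} at levels $j_{\bx}$ and $j_{\bx}-1$; the factor $2^{-(j_{\bx}-1)(p_{\bx}+1)}$ differs from $2^{-j_{\bx}(p_{\bx}+1)}$ only by a constant. For the third term, we add and subtract $y$ and use the $L^2$ estimate in \eqref{eq:13} at both levels, applied pointwise in $t$ and integrated. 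This gives the bound $2^{-j_{\bx}(p_{\bx}+1)}\|y\|_{H^{p_{\bx}+1}(\Omega)\otimes L^2(0,T)}$. Before doing this, I need $y\in H^{p_{\bx}+1}(\Omega)\otimes V^t_{j_t}$ so that the norms are meaningful. Proposition~\ref{prop:1}\,i) with $s_{\bx}=p_{\bx}+1$ provides this under \eqref{eq:22}, which is exactly \eqref{eq:28} for that $s_{\bx}$. If the right-hand sides are infinite, there is nothing to prove.

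For \eqref{eq:63}, the argument is symmetric. The analogous factorization is $P_{j_{\bx},j_t}P_{j_{\bx},\infty}=P_{j_{\bx},j_t}$, which follows because $V^{\bx}_{j_{\bx}}\otimes V^t_{j_t}\subset V^{\bx}_{j_{\bx}}\otimes H^2_{0,\bullet}(0,T)$. With $y:=P_{j_{\bx},\infty}w$ we decompose
\begin{equation*}
(P_{j_{\bx},j_t}-P_{j_{\bx},j_t-1})w = (P_{j_{\bx},j_t}-\Id^{\bx}\otimes\Pi^{\partial_t^2}_{j_t})y-(P_{j_{\bx},j_t-1}-\Id^{\bx}\otimes\Pi^{\partial_t^2}_{j_t-1})y+\big(\Id^{\bx}\otimes(\Pi^{\partial_t^2}_{j_t}-\Pi^{\partial_t^2}_{j_t-1})\big)y .
\end{equation*}
The first two terms are handled by the $\mathcal V$-to-$L^2$ bound together with \eqref{eq:57}. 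The last term is handled by the $L^2$ estimate in \eqref{eq:14}, applied for each fixed $\bx$, which gives $2^{-j_t(p_t+1)}\|y\|_{L^2(\Omega)\otimes H^{p_t+3}(0,T)}$. The required time regularity of $y$ comes from Proposition~\ref{prop:1}\,ii) with $s_t=p_t+3$; this needs $w\in H^1_0\otimes H^{p_t+1}\cap L^2\otimes H^{p_t+3}_{0,\bullet}$, which is implied by the hypothesis.

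The routine parts are the estimates of the individual terms. The step needing the most care is the factorization identities $P_{j_{\bx},j_t}=P_{j_{\bx},j_t}P_{\infty,j_t}$ and $P_{j_{\bx},j_t}=P_{j_{\bx},j_t}P_{j_{\bx},\infty}$. These depend on coercivity of $\mathcal A$ (for uniqueness) and on the fact that the coarser spaces are nested in the semidiscrete spaces. I also need to check that $y$ lies in $\mathcal V(Q_T)$ with the regularity required for \eqref{eq:56} and \eqref{eq:57}.

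One caveat concerns the case $j_{\bx}=0$ or $j_t=0$, where the convention $P_{-1,\cdot}=0$ applies. There the difference reduces to $P_{0,j_t}z$ itself. This is bounded by the same quantities via Proposition~\ref{prop:1}, the stability of $\Pi^{\nabla_{\bx}}_0$, and \eqref{eq:56}, with $2^{0}=1$, so the estimate holds trivially up to constants.
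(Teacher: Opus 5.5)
Your proposal is correct and follows essentially the paper's argument. You use the factorizations $P_{j_{\bx},j_t}=P_{j_{\bx},j_t}P_{\infty,j_t}$ and $P_{j_{\bx},j_t}=P_{j_{\bx},j_t}P_{j_{\bx},\infty}$, insert $\Pi^{\nabla_{\bx}}_{j_{\bx}}\otimes\Id^t$ (resp.\ $\Id^{\bx}\otimes\Pi^{\partial_t^2}_{j_t}$) at both levels, and then apply the Poincar\'e inequality with \eqref{eq:56}/\eqref{eq:57} and \eqref{eq:13}/\eqref{eq:14}. Your three-term decomposition is only a regrouping of the paper's triangle inequality through $P_{\infty,j_t}z$ (resp.\ $P_{j_{\bx},\infty}w$), and your remarks on regularity via Proposition~\ref{prop:1} and on the cases $j_{\bx}=0$, $j_t=0$ supply details the paper leaves implicit.
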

\begin{proof}
We first prove \eqref{eq:62}. By adding and subtracting the semidiscrete Galerkin projection $P_{\infty,j_t}$ defined in~\eqref{eq:26}, and using the triangle inequality, we write
\begin{equation}\label{eq:64}
    \| (P_{j_{\bx},j_t} - P_{j_{\bx}-1,j_t})z\|_{L^2(Q_T)} \le \| (P_{j_{\bx},j_t} - P_{\infty,j_t})z\|_{L^2(Q_T)} + \| (P_{\infty,j_t} - P_{j_{\bx}-1,j_t})z\|_{L^2(Q_T)}.
\end{equation}
We have~$P_{j_{\bx},j_t} = P_{j_{\bx},j_t} P_{\infty,j_t}$. Indeed, from \eqref{eq:10} and \eqref{eq:26}, we get
\begin{equation*}
    \mathcal{A}(P_{j_{\bx},j_t} z, z_{j_{\bx},j_t}) = \mathcal{A}(z, z_{j_{\bx},j_t}) = \mathcal{A}(P_{\infty,j_t} z, z_{j_{\bx},j_t})  = \mathcal{A}(P_{j_{\bx},j_t} P_{\infty,j_t} z, z_{j_{\bx},j_t})
\end{equation*}
for all $z_{j_{\bx},j_t} \in V_{j_{\bx}}^{\bx}(\Omega) \otimes V_{j_t}^t(0,T)$. Thus, adding and subtracting the projection~$\Pi^{\nabla_{\bx}}_{j_{\bx}}$ and applying the triangle inequality, we obtain
\begin{equation*}
\begin{aligned} 
     \| (P_{j_{\bx},j_t} - P_{\infty,j_t})z\|_{L^2(Q_T)} & = \| (P_{j_{\bx},j_t} - \Id) P_{\infty,j_t}z\|_{L^2(Q_T)}
    \\ & \le \| (P_{j_{\bx},j_t} -\Pi_{j_{\bx}}^{\nabla_{\bx}} \otimes \Id^t) P_{\infty,j_t}z\|_{L^2(Q_T)}+ \| (\Id - \Pi_{j_{\bx}}^{\nabla_{\bx}} \otimes \Id^t) P_{\infty,j_t}z\|_{L^2(Q_T)}.
\end{aligned}
\end{equation*}
For the first term on the right-hand side, using the Poincar\'e inequality and \eqref{eq:56}, we derive
\begin{align*}
    \| (P_{j_{\bx},j_t} -\Pi_{j_{\bx}}^{\nabla_{\bx}} \otimes \Id^t) P_{\infty,j_t}z\|_{L^2(Q_T)} & \lesssim 2^{-\jx (p_{\bx}+1)} \| \partial_t \mathcal{B}_{j_t} P_{\infty,j_t}z \|_{H^{p_{\bx}+1}(\Omega)\otimes L^2(0,T)}.
\end{align*}
For the second term, using the approximation properties~\eqref{eq:13} of $\Pi_{j_{\bx}}^{\nabla_{\bx}}$, we deduce
\begin{align*}
    \| (\Id - \Pi_{j_{\bx}}^{\nabla_{\bx}} \otimes \Id^t) P_{\infty,j_t}z\|_{L^2(Q_T)}\lesssim 2^{-\jx(p_{\bx}+1)} \|  P_{\infty,j_t}z\|_{H^{p_{\bx}+1}(\Omega)\otimes L^2(0,T)}.
\end{align*}
This completes the estimate of the first term on the right--hand side of~\eqref{eq:64}. Repeating exactly the same argument for the second term, we deduce~\eqref{eq:62}.

\noindent
Now we prove \eqref{eq:63}. By adding and subtracting the semidiscrete Galerkin projection $P_{j_{\bx},\infty}$ defined in \eqref{eq:27}, and using the triangle inequality, we write
\begin{equation} \label{eq:65}
    \| (P_{j_{\bx},j_t} - P_{j_{\bx},j_t-1})w\|_{L^2(Q_T)} \le \| (P_{j_{\bx},j_t} - P_{j_{\bx},\infty})w\|_{L^2(Q_T)} + \| (P_{j_{\bx},\infty} - P_{j_{\bx},j_t-1})w\|_{L^2(Q_T)}.
\end{equation}
Similarly as before, noting that $P_{j_{\bx},j_t} = P_{j_{\bx},j_t} P_{j_{\bx},\infty}$ , we obtain
\begin{equation*}
\begin{aligned}
    \| (P_{j_{\bx},j_t} - P_{j_{\bx},\infty})w\|_{L^2(Q_T)} & = \| (P_{j_{\bx},j_t} - \Id) P_{j_{\bx},\infty}w\|_{L^2(Q_T)}
    \\ & \le \| (P_{j_{\bx},j_t} - \Id^{\bx} \otimes \Pi_{j_t}^{\partial_t^2}) P_{j_{\bx},\infty}w\|_{L^2(Q_T)} + \| (\Id - \Id^{\bx} \otimes \Pi_{j_t}^{\partial_t^2}) P_{j_{\bx},\infty}w\|_{L^2(Q_T)}.
\end{aligned}
\end{equation*}
For the first term on the right-hand side, using Poincar\'e's inequality and \eqref{eq:57},
\begin{align*}
    \| (P_{j_{\bx},j_t} - \Id^{\bx} \otimes \Pi_{j_t}^{\partial_t^2}) P_{j_{\bx},\infty}w\|_{L^2(Q_T)} \lesssim 2^{-j_t (p_t+1)} \| \mathcal{L}_{j_{\bx}} P_{j_{\bx},\infty}w \|_{L^2(\Omega)\otimes H^{p_t+3}(0,T)}.
\end{align*}
For the second term, using the approximation property~\eqref{eq:14} of $\Pi^{\partial_t^2}_{j_t}$, we obtain
\begin{align*}
    \| (\Id - \Id^{\bx} \otimes \Pi_{j_t}^{\partial_t^2}) P_{j_{\bx},\infty}w\|_{L^2(Q_T)}\lesssim 2^{-j_t(p_t+1)} \|  P_{j_{\bx},\infty}w \|_{L^2(\Omega)\otimes H^{p_t+3}(0,T)},
\end{align*}
and the estimate of the first term on the right--hand side of~\eqref{eq:65} is complete. Applying the same argument to the second term, we deduce~\eqref{eq:63}.
\end{proof}

\section{Proof of convergence of the combination formula}\label{sec:convergence}

Before presenting the proof of Theorem~\ref{th:1} in Section~\ref{sec:proof}, we establish an auxiliary result in Section~\ref{sec:aux}.

\subsection{Auxiliary result}\label{sec:aux}

Analogously to~$\Delta^P_{j_{\bx},j_t}$ defined in~\eqref{eq:20}, for $j_{\bx}, j_t \ge 0$, we define the \emph{detail projection} operator $\Delta_{j_{\bx},j_t}^Q : \mathcal{V}(Q_T) \to V_{j_{\bx}}^{\bx}(\Omega) \otimes V_{j_t}^t(0,T)$ as
\begin{equation}\label{eq:66}
\begin{aligned}
    \Delta_{j_{\bx},j_t}^Qu & := (\Pi^{\nabla_{\bx}}_{j_{\bx}} \otimes \Pi^{\partial_t^2}_{j_t} - \Pi^{\nabla_{\bx}}_{j_{\bx}-1} \otimes \Pi^{\partial_t^2}_{j_t})u - (\Pi^{\nabla_{\bx}}_{j_{\bx}} \otimes \Pi^{\partial_t^2}_{j_t-1} - \Pi^{\nabla_{\bx}}_{j_{\bx}-1} \otimes \Pi^{\partial_t^2}_{j_t-1})u
    \\ & = \big((\Pi_{j_{\bx}}^{\nabla_{\bx}}- \Pi_{j_{\bx}-1}^{\nabla_{\bx}}) \otimes (\Pi_{j_t}^{\partial_t^2} - \Pi_{j_t-1}^{\partial_t^2})\big)u.
\end{aligned}
\end{equation}
where~$\Pi^{\nabla_{\bx}}_{j_{\bx}}$ and~$\Pi^{\partial_t^2}_{j_t}$ are the projections defined in~\eqref{eq:11} and~\eqref{eq:12}, respectively, and with the convention that $(\Id^{\bx} \otimes \Pi_{-1}^{\partial_t^2}) u = (\Pi_{-1}^{\nabla_{\bx}} \otimes \Id^t) u = 0$.

\noindent
It is immediate to deduce from estimates~\eqref{eq:13} and~\eqref{eq:14} that, if~$u$ satisfies the regularity assumption
\begin{equation*}
    u \in H_0^{p_{\bx}+1}(\Omega) \otimes  H_{0,\bullet}^{p_t+3}(0,T),
\end{equation*}
then
\begin{equation} \label{eq:67}
\begin{aligned}
    \| \Delta^Q_{j_{\bx},j_t} u \|_{L^2(Q_T)} & = \big\| \big((\Pi_{j_{\bx}}^{\nabla_{\bx}}- \Pi_{j_{\bx}-1}^{\nabla_{\bx}}) \otimes (\Pi_{j_t}^{\partial_t^2} - \Pi_{j_t-1}^{\partial_t^2})\big) u \big\|_{L^2(Q_T)} 
    \\ & \lesssim 2^{-j_t(p_t+1)} \big\| \big((\Pi_{j_{\bx}}^{\nabla_{\bx}}- \Pi_{j_{\bx}-1}^{\nabla_{\bx}}) \otimes \Id^t\big) u \big\|_{L^2(\Omega) \otimes H^{p_t+3}(0,T)} 
    \\ & \lesssim 2^{-j_{\bx}(p_{\bx}+1)-\jt(p_t+1)} 
    \| u \|_{H^{p_{\bx}+1}(\Omega) \otimes H^{p_t+3}(0,T)}.
\end{aligned}
\end{equation}

\noindent
In the following proposition, we estimate the gap between~$\Delta^P_{j_{\bx},j_t}u$ and $\Delta^Q_{j_{\bx},j_t}u$.
\begin{proposition} \label{prop:7}
Suppose \eqref{eq:22}, and that the solution~$u$ of problem~\eqref{eq:7} satisfies the mixed regularity assumption~\eqref{eq:23}. Then, provided that the \vanishingconditions \eqref{eq:24} and~\eqref{eq:25} are satisfied, we have
\begin{equation*}
    \big\| \big( \Delta_{j_{\bx},j_t}^P - \Delta_{j_{\bx},j_t}^Q \big) u \big\|_{L^2(Q_T)} \lesssim  2^{-j_{\bx} (p_{\bx} +1)-j_t (p_t+1)} \big(\|u\|_{H^{p_{\bx}+4}(\Omega)\otimes H^{p_t+3}(0,T)}
    + \| u \|_{H^{p_{\bx}+1}(\Omega)\otimes H^{p_t+6}(0,T)}\big),
\end{equation*}
where $\Delta^P_{j_{\bx},j_t}$ and $\Delta^Q_{j_{\bx},j_t}$ are defined in \eqref{eq:20} and \eqref{eq:66}, respectively. 
\end{proposition}
\begin{proof}
Using 
\begin{equation*}
    P_{j_{\bx},j_t}(\Pi_{j_{\bx}}^{\nabla_{\bx}} \otimes \Pi_{j_t}^{\partial_t^2}) = \Pi_{j_{\bx}}^{\nabla_{\bx}} \otimes \Pi_{j_t}^{\partial_t^2},
\end{equation*}
we compute
\begin{equation} \label{eq:71}
\begin{aligned}
    \Delta_{j_{\bx},j_t}^P - \Delta_{j_{\bx},j_t}^Q & =  P_{j_{\bx},j_t}(\Id -\Pi_{j_{\bx}}^{\nabla_{\bx}} \otimes \Pi_{j_t}^{\partial_t^2}) - P_{j_{\bx}-1,j_t}(\Id -\Pi_{j_{\bx}-1}^{\nabla_{\bx}} \otimes \Pi_{j_t}^{\partial_t^2})
    \\ & \quad - P_{j_{\bx},j_t-1}(\Id -\Pi_{j_{\bx}}^{\nabla_{\bx}} \otimes \Pi_{j_t-1}^{\partial_t^2})+ P_{j_{\bx}-1,j_t-1}(\Id -\Pi_{j_{\bx}-1}^{\nabla_{\bx}} \otimes \Pi_{j_t-1}^{\partial_t^2}).
\end{aligned}
\end{equation}
Then, inserting the identity
\begin{align*}
    \Id-\Pi_{j_{\bx}}^{\nabla_{\bx}} \otimes \Pi_{j_t}^{\partial_t^2}=\Id^{\bx}\otimes(\Id^t-\Pi_{j_t}^{\partial_t^2})+(\Id^{\bx}-\Pi_{j_{\bx}}^{\nabla_{\bx}} )\otimes \Id^t -(\Id^{\bx}-\Pi_{j_{\bx}}^{\nabla_{\bx}} )\otimes (\Id^t-\Pi_{j_t}^{\partial_t^2})
\end{align*}
into~\eqref{eq:71} and reordering the terms yield
\begin{equation}\label{eq:72}
\begin{split}
    \Delta_{j_{\bx},j_t}^P - \Delta_{j_{\bx},j_t}^Q & =
   (P_{j_{\bx},j_t}-P_{j_{\bx}-1,j_t})(\Id^{\bx}\otimes (\Id^t-\Pi_{j_t}^{\partial_t^2}))
   \\ & \quad - (P_{j_{\bx},j_t-1}-P_{j_{\bx}-1,j_t-1})(\Id^{\bx}\otimes (\Id^t-\Pi_{j_t-1}^{\partial_t^2}))
   \\ & \quad + (P_{j_{\bx},j_t}-P_{j_{\bx},j_t-1})((\Id^{\bx}-\Pi^{\nabla_{\bx}}_{j_{\bx}})\otimes \Id^t)
   \\ & \quad - (P_{j_{\bx}-1,j_t}-P_{j_{\bx}-1,j_t-1})((\Id^{\bx}-\Pi^{\nabla_{\bx}}_{j_{\bx}-1})\otimes \Id^t)
   \\ & \quad - P_{j_{\bx},j_t}((\Id^{\bx}-\Pi^{\nabla_{\bx}}_{j_{\bx}})\otimes (\Id^t-\Pi_{j_t}^{\partial_t^2}))
   \\ & \quad + P_{j_{\bx}-1,j_t}((\Id^{\bx}-\Pi^{\nabla_{\bx}}_{j_{\bx}-1})\otimes (\Id^t-\Pi_{j_t}^{\partial_t^2}))
   \\ & \quad + P_{j_{\bx},j_t-1}((\Id^{\bx}-\Pi^{\nabla_{\bx}}_{j_{\bx}})\otimes (\Id^t-\Pi_{j_t-1}^{\partial_t^2}))
   \\ & \quad - P_{j_{\bx}-1,j_t-1}((\Id^{\bx}-\Pi^{\nabla_{\bx}}_{j_{\bx}-1})\otimes (\Id^t-\Pi_{j_t-1}^{\partial_t^2})).
   \end{split}
\end{equation}
Due to the coercivity in~\eqref{eq:8}, consistency, and the definitions of $\Pi^{\nabla_{\bx}}_{j_{\bx}}$ and $\Pi_{j_t}^{\partial_t^2}$, we compute
\begin{align*}
    & \| P_{j_{\bx},j_t}((\Id^{\bx} -\Pi^{\nabla_{\bx}}_{j_{\bx}})\otimes (\Id^t-\Pi_{j_t}^{\partial_t^2}))u\|^2_{\mathcal{V}(Q_T)}
    \\ & \quad \lesssim \mathcal{A}( P_{j_{\bx},j_t}((\Id^{\bx}-\Pi^{\nabla_{\bx}}_{j_{\bx}})\otimes (\Id^t-\Pi_{j_t}^{\partial_t^2}))u, P_{j_{\bx},j_t}((\Id^{\bx}-\Pi^{\nabla_{\bx}}_{j_{\bx}})\otimes (\Id^t-\Pi_{j_t}^{\partial_t^2}))u)
    \\ & \quad = \mathcal{A}((\Id^{\bx}-\Pi^{\nabla_{\bx}}_{j_{\bx}})\otimes (\Id^t-\Pi_{j_t}^{\partial_t^2})u, P_{j_{\bx},j_t}((\Id^{\bx}-\Pi^{\nabla_{\bx}}_{j_{\bx}})\otimes (\Id^t-\Pi_{j_t}^{\partial_t^2}))u)
    \\ & \quad = 0.
\end{align*}
In a similar way, one can show that also the contributions of last three terms in~\eqref{eq:72} to~$(\Delta_{j_{\bx},j_t}^P - \Delta_{j_{\bx},j_t}^Q)u$ are zero. The four remaining terms are estimated combining the results in Proposition~\ref{prop:6},  Corollary~\ref{cor:1}, which is valid under the \vanishingconditions \eqref{eq:25}, and Corollary~\ref{cor:2}, which is valid under the \vanishingconditions \eqref{eq:24}, together with the approximation properties~\eqref{eq:13} and~\eqref{eq:14}. Indeed, setting~$z:=(\Id^{\bx}\otimes (\Id^t-\Pi_{j_t}^{\partial_t^2}))u $ and~$w := ((\Id^{\bx}-\Pi^{\nabla_{\bx}}_{j_{\bx}})\otimes \Id^t) u$, we get
\begin{align*}
    \|(P_{j_{\bx},j_t}-P_{j_{\bx}-1,j_t})z \|_{L^2(Q_T)} 
    & \lesssim 2^{-j_{\bx} (p_{\bx}+1)}\left(\|P_{\infty,j_t}z\|_{H^{p_{\bx}+1}(\Omega)\otimes L^2(0,T)}+ \| \partial_t \mathcal{B}_{j_t} P_{\infty,j_t}z\|_{H^{p_{\bx}+1}(\Omega)\otimes L^2(0,T)}\right)
    \\ & \lesssim 2^{-j_{\bx} (p_{\bx}+1)}\left(\|z\|_{H^{p_{\bx}+2}(\Omega)\otimes L^2(0,T)}+ \| z \|_{H^{p_{\bx}+4}(\Omega)\otimes L^2(0,T)}\right)
    \\ & \lesssim  2^{-j_{\bx} (p_{\bx} +1)} \| z\|_{H^{p_{\bx}+4}(\Omega)\otimes L^2(0,T)}
    \\ & \lesssim  2^{-j_{\bx} (p_{\bx} +1)-j_t (p_t+1)} \|u\|_{H^{p_{\bx}+4}(\Omega)\otimes H^{p_t+3}(0,T)}
\end{align*}
and also
\begin{align*}
    \| (P_{j_{\bx},j_t} - P_{j_{\bx},j_t-1})w \|_{L^2(Q_T)} & \lesssim 2^{-j_t (p_t+1)}\left(\|P_{j_{\bx},\infty}w\|_{L^2(\Omega)\otimes H^{p_t+3}(0,T)}+\| \mathcal{L}_{j_{\bx}} P_{j_{\bx},\infty}w\|_{L^2(\Omega)\otimes H^{p_t+3}(0,T)}\right)
    \\ & \lesssim
    2^{-j_t (p_t+1)} (\|w\|_{L^2(\Omega)\otimes H^{p_t+4}(0,T)}+ \|w\|_{L^2(\Omega)\otimes  H^{p_t+6}(0,T)})
    \\ & \lesssim
    2^{-j_t (p_t+1)} \|w\|_{L^2(\Omega)\otimes H^{p_t+6}(0,T)}
    \\ & \lesssim
    2^{-j_t (p_t+1)-\jx (p_{\bx}+1)}\|u\|_{H^{p_{\bx}+1}(\Omega)\otimes H^{p_t+6}(0,T)},
\end{align*}
and the proof is complete.
\end{proof}
\begin{remark}[\Vanishingconditions and loss of convergence rates] If the \vanishingconditions \eqref{eq:24} and~\eqref{eq:25} required in Corollaries~\ref{cor:1} and~\ref{cor:2} are not satisfied, we expect a deterioration in the convergence rate stated in Proposition~\ref{prop:7}.
In particular, estimates of the initial-time terms in~\eqref{eq:45} and~\eqref{eq:46}, together with~\eqref{eq:47}, could give a precise quantification of this loss when some of the initial \vanishingconditions are not satisfied.
Quantifying the reduction in the convergence rates when the \vanishingconditions on the boundary are not satisfied would require an explicit characterization of the difference between the~$\mathcal{H}^{s_{\bx}}(\Omega)$ and~$H^{s_{\bx}}(\Omega)$ norms. We recall that these two norms are equivalent when the \vanishingconditions on the boundary~\eqref{eq:25} are satisfied~\cite[Lemma 3.1]{thomee2006galerkin}.
\end{remark}

\begin{remark}
[Are the \vanishingconditions of Proposition~\ref{prop:7} necessary for optimal convergence?]\label{rem:Necessity} To show that the convergence rate stated in Proposition~\ref{prop:7} may deteriorate when not all \vanishingconditions are satisfied, we numerically study the decay of the details~$\Delta^P_{j_{\bx},\jt}u$, $\Delta^Q_{j_{\bx},\jt}u$, and of their difference~$(\Delta^P_{\jx,\jt}-\Delta^Q_{j_{\bx},\jt}) u$, as the mesh size~$h= h_{j_\bx}= h_{j_t}$ tends to zero. We consider the case $j_{\bx}=j_t$ and use maximal-regularity splines of degree $p_{x}=1$ in space and $p_t=2$ in time. To this end, we consider the functions
\[
u_1(x,t) = (e^{xt}-1)\sin(\pi x), \quad
u_2(x,t) = (e^{xt}-1-xt)\sin(\pi x), \quad
u_3(x,t) = \big(e^{xt}-1-xt-\tfrac12 x^2 t^2\big)\sin(\pi x)
\]
on $Q_T=(0,1)\times (0,1)$, with $c=1$.
None of these functions satisfies the boundary \vanishingconditions \eqref{eq:25}. Concerning the initial conditions~\eqref{eq:24}, $u_1$ satisfies none, $u_2$ satisfies~\eqref{eq:24} for~$\ell_t=1$, and~$u_3$ satisfies~\eqref{eq:24} for~$\ell_t=1,2$. Figure~\ref{fig:DeltaP_DeltaQ} shows the decay of the $L^2(Q_T)$ norms of the details and of their difference, as functions of the mesh size~$h=2^{-\jt}$, $j_t=1,\dots,6$. In all cases, $\|\Delta^Q_{j_{\bx},\jt}u\|_{L^2(Q_T)}$ decays at the rate $\mathcal{O}(h^{p_{\bx}+p_t+2}) = \mathcal{O}(h^5)$, in agreement with~\eqref{eq:67}. In contrast, $\|\Delta^P_{j_{\bx},\jt}u\|_{L^2(Q_T)}$ and $\|(\Delta^P_{j_{\bx},\jt}-\Delta^Q_{j_{\bx},\jt}) u\|_{L^2(Q_T)}$ exhibit a suboptimal rate. The rate of Proposition~\ref{prop:7} is recovered as these conditions are progressively enforced, as observed for $u_3$. In this case, the Galerkin detail converges with optimal rate even if three of the five initial \vanishingconditions \eqref{eq:24} and all the boundary \vanishingconditions \eqref{eq:25} are not satisfied.
\begin{figure}[ht]
    \includegraphics[width=\linewidth]{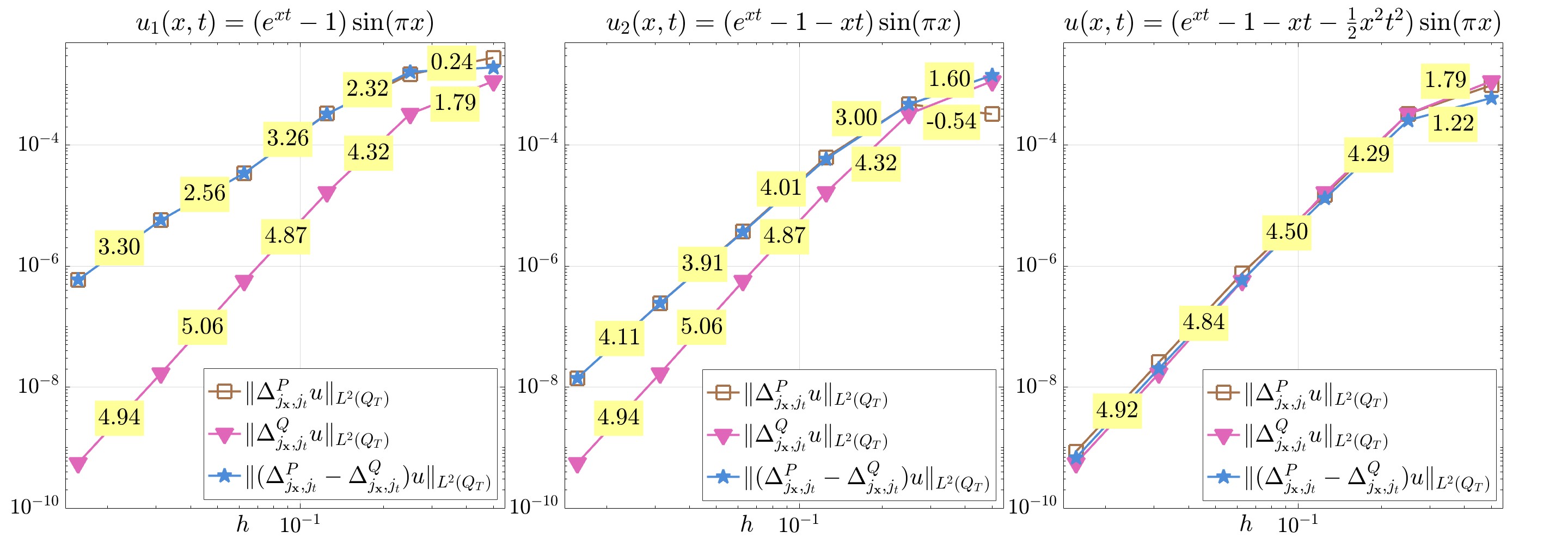}
    \caption{$\|\Delta^P_{j_{\bx},\jt}u\|_{L^2(Q_T)}$, $\|\Delta^Q_{j_{\bx},\jt}u\|_{L^2(Q_T)}$, and~$\|(\Delta^P_{j_{\bx},\jt}-\Delta^Q_{j_{\bx},\jt}) u\|_{L^2(Q_T)}$ for the three functions in Remark~\ref{rem:Necessity} and maximal-regularity splines of degree $p_{\bx}=1$ in space and $p_t=2$ in time.}
    \label{fig:DeltaP_DeltaQ}
\end{figure}
\end{remark}

\subsection{Proof of Theorem~\ref{th:1}}\label{sec:proof}

We can now prove our main theorem.
\begin{proof}[Proof of Theorem \ref{th:1}]
Using the triangle inequality with~$P_{J,J} u$, we write
\begin{equation} \label{eq:triangle}
\begin{aligned}
    \| u - u^{CF}_J \|_{L^2(Q_T)} \le \| u - P_{J,J} u \|_{L^2(Q_T)} + \| P_{J,J} u - u^{CF}_J \|_{L^2(Q_T)}.
\end{aligned}
\end{equation}
The term~$\| u - P_{J,J} u \|_{L^2(Q_T)}$ is estimated with~\eqref{eq:18}. For~$\| P_{J,J} u - u^{CF}_J \|_{L^2(Q_T)}$, we use the second identity in~\eqref{eq:21bis} and the triangle inequality, and obtain
\begin{equation*}
\begin{aligned}
    \| P_{J,J} u - u^{CF}_J \|_{L^2(Q_T)}
    & \le \sum_{\substack{j_{\bx},j_t \le J  \\ j_{\bx}+j_t > J}}  \| \Delta_{j_{\bx},j_t}^P u \|_{L^2(Q_T)}  
    \\ & \le \sum_{\substack{j_{\bx},j_t \le J  \\ j_{\bx}+j_t > J}} \| (\Delta_{j_{\bx},j_t}^P - \Delta_{j_{\bx},j_t}^Q )u \|_{L^2(Q_T)} + \sum_{\substack{j_{\bx},j_t \le J  \\ j_{\bx}+j_t > J}} \| \Delta_{j_{\bx},j_t}^Q u \|_{L^2(Q_T)}
    \\ & \le \sum_{\jx +\jt > J} \| (\Delta_{j_{\bx},j_t}^P - \Delta_{j_{\bx},j_t}^Q )u \|_{L^2(Q_T)} + \sum_{\jx+\jt > J} \| \Delta_{j_{\bx},j_t}^Q u \|_{L^2(Q_T)},
\end{aligned}
\end{equation*}
with~$\Delta_{j_{\bx},j_t}^Q$ defined as in~\eqref{eq:66}. For the first term, using Proposition~\ref{prop:7}, we obtain
\begin{align}
    & \sum_{\jx +\jt> J}  \big\|\big( \Delta_{j_{\bx},j_t}^P - \Delta_{j_{\bx},j_t}^Q \big)u \big\|_{L^2(Q_T)}
    \\ & \label{eq:NormSumSumNorm}
    \qquad \lesssim \big(\|u\|_{H^{p_{\bx}+4}(\Omega)\otimes H^{p_t+3}(0,T)} +  \|u\|_{H^{p_{\bx}+1}(\Omega)\otimes H^{p_t+6}(0,T)} \big) \sum_{\jx +\jt > J} 2^{-j_{\bx} (p_{\bx} +1)-j_t (p_t+1)} .
    \nonumber
\end{align}
The steps above are justified as the series on the right-hand side is convergent.
Analogously, for the second term, we have
\begin{align*}
   \sum_{\jx +\jt > J}  \big\|\Delta_{j_{\bx},j_t}^Q u \big\|_{L^2(Q_T)} \lesssim \|u\|_{H^{p_{\bx}+1}(\Omega)\otimes H^{p_t+3}(0,T)} \sum_{\jx +\jt > J} 2^{-j_{\bx}(p_{\bx}+1)-j_t (p_t+1)},
\end{align*}
where we have used~\eqref{eq:67}. Then, we arrive at 
\begin{align*}
   \| P_{J,J} u - u^{CF}_J \|_{L^2(Q_T)} & \lesssim \big(\|u\|_{H^{p_{\bx}+4}(\Omega)\otimes H^{p_t+3}(0,T)}+\|u\|_{H^{p_{\bx}+1}(\Omega)\otimes H^{p_t+6}(0,T)}\big) \sum_{\jx+\jt > J} 2^{-j_{\bx}(p_{\bx}+1) -j_t (p_t+1)}.
\end{align*}
To complete the proof, it only remains to show that
\begin{equation} \label{eq:73}
    \sum_{\jx+\jt > J} 2^{-j_{\bx}(p_{\bx}+1) -j_t (p_t+1)} \lesssim \begin{cases}
     2^{-J\min\{p_{\bx}+1,p_t+1\}}, &\text{if } p_{\bx}\neq p_t,
    \\ 2^{-J(p_t+1)} J, & \text{if } p_{\bx} =  p_t.
    \end{cases}
\end{equation}
To this end, we split the index set $\{(\jx,\jt)\in \mathbb{N}^2 \mid \jx+\jt>J\}$ into two disjoint sets (see Figure \ref{fig:2})
\begin{equation} \label{eq:74}
\begin{aligned}
   I_1 := \{ (\jx,\jt)\in \mathbb{N}^2 \mid 0 \le \jx \le J, J -\jx< \jt\}, \quad \quad 
   I_2 := \{(\jx,\jt)\in \mathbb{N}^2 \mid J< \jx, 0 \le \jt\}. 
\end{aligned}
\end{equation}
\begin{figure}[htb!]\centering
\begin{tikzpicture}[scale=.6]
\draw[yellow,fill=yellow!50!white,draw=none]
(4,0)--(4,7)--(7,7)--(7,0)--(4,0);
\draw[cyan,fill=cyan!50!white,opacity=0.5,draw=none]
(4,0)--(0,4)--(0,7)--(4,7)--(4,0);
\draw[->](0,0)--(7,0); 
\draw[->](0,0)--(0,7); 
\draw[thick](0,4)--(4,0);
\draw[thick](4,0)--(4,6);
\draw[thick, dashed](4,6)--(4,7);
\draw(-.5,0)node{$0$};
\draw(-.5,4)node{$J$};
\draw(4,-0.5)node{$J$};
\draw(7,-.4)node{$\jx$};
\draw(-.4,7)node{$\jt$};
\draw(2,4)node{$I_1$};
\draw(5.5,4)node{$I_2$};
\end{tikzpicture}
\caption{Index sets associated with the splitting in \eqref{eq:74}.}
\label{fig:2}
\end{figure}

\noindent
Then, we derive
\begin{align*}
   \sum_{\jx+\jt > J} 2^{-j_{\bx}(p_{\bx}+1) -j_t (p_t+1)} & = \sum_{j_{\bx}=0}^{J} \sum_{j_t = J- j_{\bx}+1}^{\infty} 2^{-j_{\bx}(p_{\bx}+1) -j_t(p_t+1)} + \sum_{j_{\bx} = J+ 1}^\infty \sum_{j_t=0}^\infty 2^{-j_{\bx}(p_{\bx}+1) - j_t(p_t+1)}
    \\ &  \lesssim \sum_{j_{\bx}=0}^{J} 2^{-j_{\bx}(p_{\bx}+1) -(J- j_{\bx} +1)(p_t+1)} +\sum_{j_{\bx} = J+ 1}^\infty  2^{-j_{\bx}(p_{\bx}+1)}  
    \\ & \lesssim \sum_{j_{\bx}=0}^{J} 2^{-j_{\bx}(p_{\bx} - p_t) - (J+1)(p_t+1)} + 2^{-(J+1)(p_{\bx}+1)} 
    \\ & = 2^{-(J+1)(p_t+1)} \Big(\sum_{j_{\bx}=0}^{J} 2^{-j_{\bx} (p_{\bx}- p_t)} + 2^{-(J+1)(p_{\bx} - p_t)}\Big)
    \\ &=2^{-(J+1)(p_t+1)} \sum_{j_{\bx}=0}^{J+1} 2^{-j_{\bx} (p_{\bx}- p_t)} =:\mathcal S_J
\end{align*}
We distinguish three cases.
\begin{itemize}
\item[\emph{i)}] If $p_{\bx}< p_t$, we have
\[
\begin{split}
    {\mathcal S}_J & = 2^{-(J+1)(p_t+1)}\Big(\frac{2^{(J+2)({p_t}-p_{\bx})}-1}{2^{({p_t}-p_{\bx})}-1}\Big) \le 2^{-(J+1)(p_t+1)}\Big(\frac{2^{2({p_t}-p_{\bx})}}{2^{({p_t}-p_{\bx})}-1}\Big)2^{J({p_t}-p_{\bx})}
    \\ & 
    \lesssim 2^{-J(p_{\bx}+1)}=2^{-J\min\{p_{\bx}+1,p_t+1\}}.
\end{split}
\]
\item[\emph{ii)}] If $p_{\bx} > p_t$, we have~${\mathcal S}_J \lesssim 2^{-(J+1)(p_t+1)}\lesssim 2^{-J(p_t+1)}=2^{-J\min\{p_{\bx}+1,p_t+1\}}$.
\item[\emph{iii)}] If $p_{\bx} = p_t$, we have~${\mathcal S}_J = 2^{-(J+1)(p_t+1)}(J+2)\lesssim 2^{-J(p_t+1)}J$.
\end{itemize}
This proves~\eqref{eq:73} and the proof of the theorem is complete.
\end{proof}

\section{Numerical experiments} \label{sec:numericalexp}

In this section, we present numerical experiments in $(1+1)$- and $(2+1)$-dimensions to validate the theoretical estimates in Theorem \ref{th:1} and further illustrate the effectiveness of the proposed method. In particular, we confirm the saving in the number of degrees of freedom achieved by the sparse-grid discretization compared to the full-grid scheme, as  discussed in Section \ref{sec:complexity}. Moreover, we investigate whether the \vanishingconditions at~$t=0$ and on~$\partial\Omega$ required by Theorem~\ref{th:1} are strictly necessary for optimal convergence or if they can be relaxed, and we study the effect of the choice of the mesh sizes $h_0^{\bx}$ and $h_0^t$ at level~$0$ in relation to the oscillatory nature of the solutions (see the example in \S\ref{sec:example3}).

\medskip
\noindent 
The method has been implemented using Matlab\footnote{Replication data are available at \url{https://github.com/ChiaraPerinati/Sparse-grid_XT_Wave}.}. All linear systems have been solved with Matlab's backslash direct solver. Differently from Figure~\ref{fig:DeltaP_DeltaQ}, in each experiment of this section the polynomial degrees in space and time coincide $p=p_\bx=p_t$, and similarly do the initial mesh sizes $h_0 = h_0^{\bx} = h_0^t$. Recalling that $h_{j_\bx} = h_0 2^{-j_\bx}$ and $h_{j_t} = h_0 2^{-j_t}$ for $0 \le j_\bx, j_t \le J$ (see Section~\ref{sec:numericalscheme}), we consider the same finest mesh parameter $h_J = h_0 2^{-J}$ in space and in time, and study convergence by increasing~$J$, which simultaneously halves the mesh parameter $h_J$ in both directions. The full-grid solution~$P_{J,J}u$ is computed on a tensor-product mesh with mesh parameter~$h_J$ in both space and time. The sparse-grid solution~$u_J^{CF}$, obtained via the combination formula~\eqref{eq:19}, has coarsest and finest mesh parameters $h_0$ and $h_J$, respectively, in both space and time. We plot the errors against~$h_J$ and the value of~$h_0$ is specified in each example.

\subsection{\texorpdfstring{$(1+1)$}{1+1}-dimensional example satisfying \vanishingconditions}\label{sec:example1}

We consider problem~\eqref{eq:1} on the spatial domain $\Omega = (-1,1)$ with final time $T=1$, constant wave velocity $c = 1$, and homogeneous initial data ($u_0 = v_0 = 0$).The source term $f$ is chosen such that the exact solution is
\begin{equation}\label{eq:exp1}
 	u(x,t) =t^6 \sin\left(\pi x\right) \sin^2\left(\frac54\pi t\right).
\end{equation}
This function is smooth and satisfies
\begin{align*} 
    \partial_t^{\ell_t} u(\cdot,0)&  =0 \quad \text{in } \Omega \quad \text{for } \ell_t=1,\dots,7,\\
    \partial_x^{2\ell_{\bx}} u (\cdot,t)& =0 \quad \text{on } \partial \Omega \times (0,T) \quad \text{for } \ell_{\bx}\ge 1.
\end{align*}
Hence, the \vanishingconditions \eqref{eq:24}--\eqref{eq:25} required for optimal convergence rates hold for all polynomial degrees $p = p_{\bx} = p_t \le 4$. We fix $h_0=2^{-1}$, and vary the finest level~$J$ from 1 to 6.

\noindent 
We first choose $V^{t}_{\jt}(0,T)$ as the space of $C^1$-regular splines in time and $V^{\bx}_{\jx}(\Omega)$ as the space of maximal-regularity splines in space, both with even polynomial degrees $p = 2, 4$. With this choice of spaces, assumptions~\eqref{eq:13}--\eqref{eq:14} hold (see Remark~\ref{rem:1}), so the full-grid approximation satisfies the optimal convergence rates in estimate~\eqref{eq:16}. Moreover, since the exact solution~\eqref{eq:exp1} satisfies the \vanishingconditions at the initial time and on the boundary, Theorem~\ref{th:1} applies, and the discrete solution obtained by the sparse-grid method achieves optimal convergence rates as well.

\medskip
\noindent
The left panel of Figure~\ref{fig:exp1C1} shows the $L^2(Q_T)$ relative errors plotted against $h_J$ for both the full- and sparse-grid methods, and $J=1,\ldots,6$, confirming the expected rate $\mathcal{O}(h_J^{p+1})$.
We also plot the difference $\|P_{J,J}u - u_J^{CF}\|_{L^2(Q_T)}$ between the full- and the sparse-grid approximations, which decays at the same optimal rate. 
We recall that the finest mesh sizes in space and time used for a given sparse-grid solution $u_J^{CF}$ coincide with the space and time mesh sizes of the full-grid solution $P_{J,J}u$ plotted at the same abscissa. Some of the meshes used are shown in Figure~\ref{fig:MeshesEx1}.

\medskip
\noindent 
The right panel of Figure~\ref{fig:exp1C1} compares the errors against the total number of degrees of freedom $N_{\mathrm{DoFs}}$. The sparse-grid approximation achieves better accuracy for a comparable number of degrees of freedom.
In particular, the error decay observed is consistent with the rate $\mathcal{O}( (N_{\mathrm{DoFs}})^{-(p+1)}(\log_2 (N_{\mathrm{DoFs}}))^{p+2})$ predicted by \eqref{eq:26a} for the sparse-grid approximations, whereas the full-grid method exhibits only a rate of order $\mathcal{O}((N_{\mathrm{DoFs}})^{-\frac{p+1}{2}})$, in agreement with the complexity estimates~\eqref{eq:26a}.

\begin{figure}[htb!]
    \includegraphics[width=0.49\linewidth]{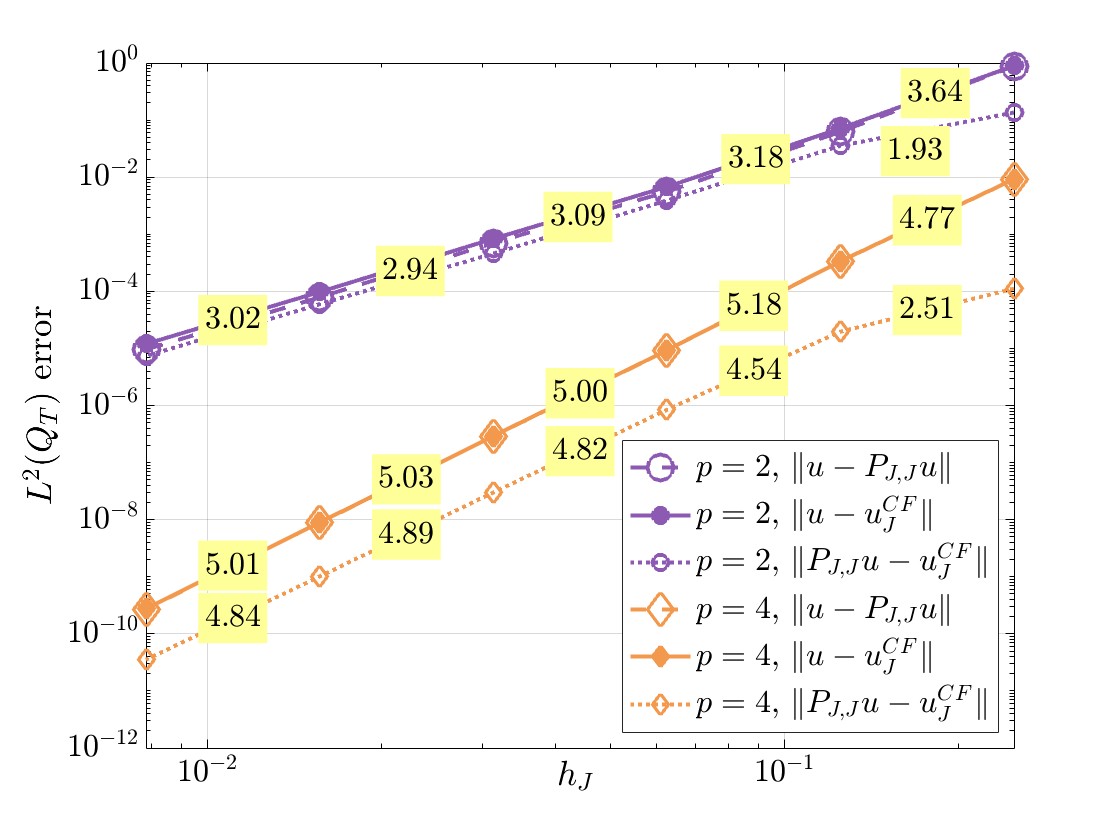}
    \includegraphics[width=0.49\linewidth]{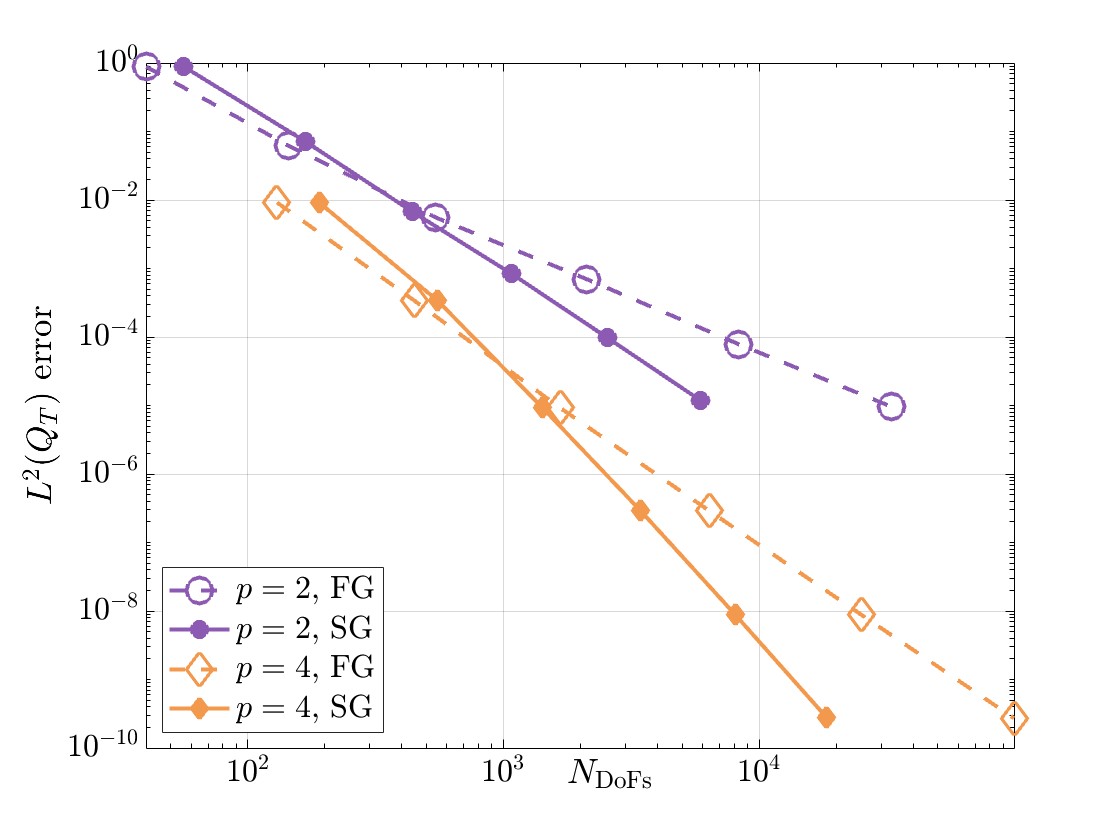}
    \caption{Example in \S\ref{sec:example1}. $L^2(Q_T)$ relative errors with $C^1$-regular splines in time and maximal-regularity splines in space for the exact solution as in~\eqref{eq:exp1}, plotted with respect to $h_J$ (left panel) and $N_{\mathrm{DoFs}}$ (right panel).
    Comparison between full-grid (FG) and sparse-grid (SG) approximations.}
    \label{fig:exp1C1}
\end{figure}
\noindent

\begin{figure}[ht]
\newcommand{\SGrec}[1]{\draw[#1] (\SGx,\SGy)--(\SGx+2,\SGy)--(\SGx+2,\SGy+1)--(\SGx,\SGy+1)--(\SGx,\SGy);}
\newcommand{\SGvgrid}[1]{ \foreach \k in {1,...,#1}{\draw(\SGx+\k*2/#1,\SGy)--(\SGx+\k*2/#1,\SGy+1);} }
\newcommand{\SGhgrid}[1]{ \foreach \k in {1,...,#1}{\draw(\SGx,\SGy+\k/#1)--(\SGx+2,\SGy+\k/#1);} }
\centering
\begin{tikzpicture}[scale=1] 
\draw[lightgray!50!white](-.5,.5)--(8.5,.5);
\draw[lightgray!50!white](-.5,2)--(8.5,2);
\draw[lightgray!50!white](-.5,3.5)--(8.5,3.5);
\draw[lightgray!50!white](-.5,5)--(8.5,5);
\draw[lightgray!50!white](1,-.5)--(1,5);
\draw[lightgray!50!white](3.5,-.5)--(3.5,5);
\draw[lightgray!50!white](6,-.5)--(6,5);
\draw[lightgray!50!white](8.5,-.5)--(8.5,5);
\draw[->](-.5,-.5)--(10,-.5)node[anchor=north]{$j_\bx$};
\draw[->](-.5,-.5)--(-.5,6)node[anchor=east]{$j_t$};
\def\SGx{0};\def\SGy{0};\SGrec{}\SGvgrid{4}\SGhgrid{2}
\def\SGx{2.5};\def\SGy{0};\SGrec{}\SGvgrid{8}\SGhgrid{2}
\def\SGx{5};\def\SGy{0};\SGrec{fill=cyan!50!white}\SGvgrid{16}\SGhgrid{2}
\def\SGx{7.5};\def\SGy{0};\SGrec{fill=yellow}\SGvgrid{32}\SGhgrid{2}
\def\SGx{0};\def\SGy{1.5};\SGrec{}\SGvgrid{4}\SGhgrid{4}
\def\SGx{2.5};\def\SGy{1.5};\SGrec{fill=cyan!50!white}\SGvgrid{8}\SGhgrid{4}
\def\SGx{5};\def\SGy{1.5};\SGrec{fill=yellow}\SGvgrid{16}\SGhgrid{4}
\def\SGx{0};\def\SGy{3};\SGrec{fill=cyan!50!white}\SGvgrid{4}\SGhgrid{8}
\def\SGx{2.5};\def\SGy{3};\SGrec{fill=yellow}\SGvgrid{8}\SGhgrid{8}
\def\SGx{0};\def\SGy{4.5};\SGrec{fill=yellow}\SGvgrid{4}\SGhgrid{16}
\def\SGx{7.5};\def\SGy{4.5};\SGrec{}\SGvgrid{32}\SGhgrid{16}
\draw[->](9.5,4.5)--(10,4.5)node[anchor=north]{$\bx$};
\draw[->](7.5,5.5)--(7.5,6)node[anchor=east]{$t$};
\draw(1,-.7)node{0};
\draw(3.5,-.7)node{1};
\draw(6,-.7)node{2};
\draw(8.5,-.7)node{3};
\draw(-.7,.5)node{0};
\draw(-.7,2)node{1};
\draw(-.7,3.5)node{2};
\draw(-.7,5)node{3};
\draw(7.5,4.5) node[anchor=north]{$-1$};
\draw(9.5,4.5) node[anchor=north]{$1$};
\draw(7.5,4.5) node[anchor=east]{$0$};
\draw(7.5,5.5) node[anchor=east]{$1$};
\end{tikzpicture}
\caption{Some of the meshes used in the example in \S\ref{sec:example1}.
Each rectangle depicts the space--time cylinder $\Omega_T=(-1,1)\times(0,1)$.
For~$J=3$, the sparse-grid solution $u_3^{CF}$ is the sum of the four Galerkin solutions computed on the yellow meshes, minus the sum of the three Galerkin solutions on the blue meshes. The full-grid solution $P_{3,3}u$ is computed on the fine mesh in the upper-right corner. The errors committed by these approximations are comparable and correspond to the third column of markers from the right ($J=3$) in the left panel of Figure~\ref{fig:exp1C1}.
}
\label{fig:MeshesEx1}
\end{figure}

\noindent 
We repeat the experiment using maximal-regularity splines in both space and time for $p = 2, 3, 4$. In this case, assumption~\eqref{eq:14} is only conjectured (see Remark~\ref{rem:1}). Nevertheless, as shown in Figure~\ref{fig:exp1max}, left panel, the numerical results still exhibit optimal convergence rates $\mathcal{O}(h^{p+1}_J)$ in the $L^2(Q_T)$ norm for both the full- and the sparse-grid methods. 
In the right panel of Figure~\ref{fig:exp1max}, we plot the errors against $N_{\mathrm{DoFs}}$, showing the same reduction in computational cost as in the previous case.
\begin{figure}[htb!]
    \includegraphics[width=0.49\linewidth]{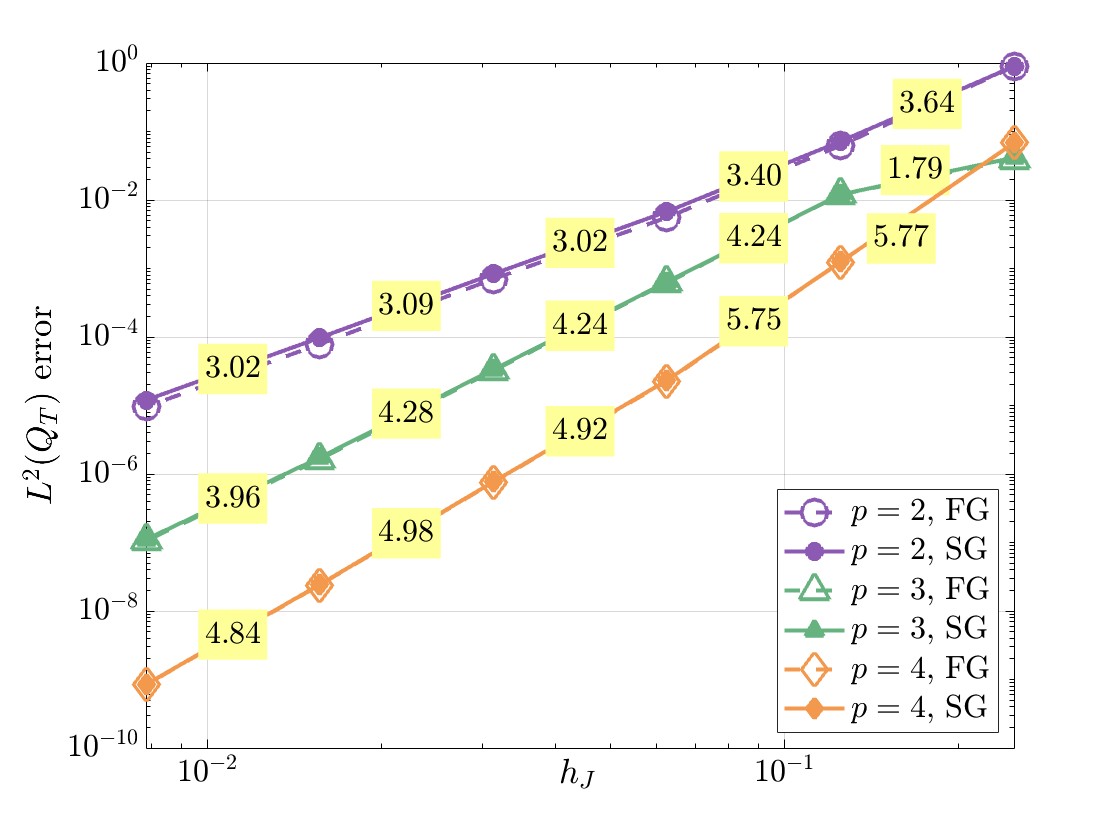}
    \includegraphics[width=0.49\linewidth]{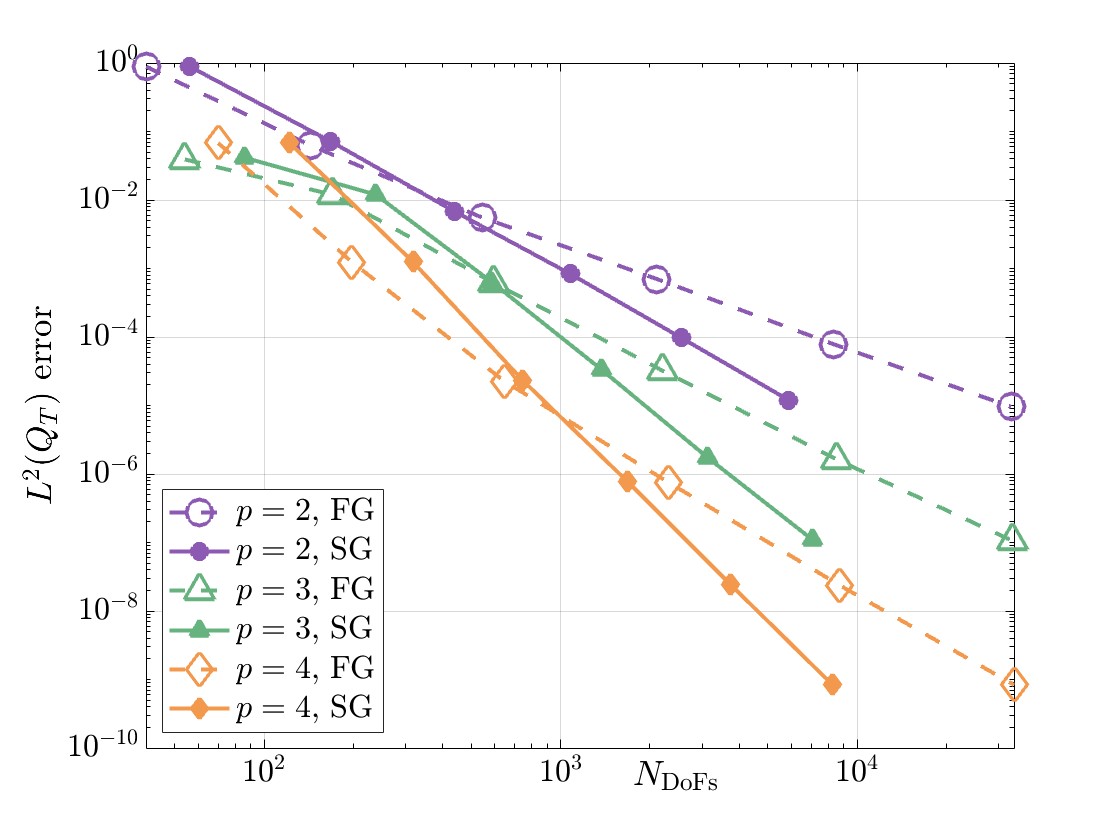}
    \caption{Example in \S\ref{sec:example1}. $L^2(Q_T)$ relative errors with maximal-regularity splines both in space and in time for the exact solution as in~\eqref{eq:exp1}, plotted with respect to $h_J$ (left panel) and $N_{\mathrm{DoFs}}$ (right panel). Comparison between full-grid (FG) and sparse-grid (SG) approximations.} 
    \label{fig:exp1max}
\end{figure}

\subsection{\texorpdfstring{$(1+1)$}{1+1}-dimensional example not satisfying \vanishingconditions}\label{sec:example2} 

We consider the model problem \eqref{eq:1} with the spatial domain $\Omega = (0,1)$, the final time $T=1$, and the wave velocity $c^2(x) = 1+x$. The initial data $u_0, v_0$ and the source term $f$ are chosen such that the exact solution is
\begin{align}
	u(x,t) & = e^{xt}x(1-x),\label{eq:exp52a}
\end{align}
The solution is smooth but does not satisfy any of the \vanishingconditions \eqref{eq:24}--\eqref{eq:25}.

\medskip
\noindent 
We discretize with maximal-regularity splines both in space and time with polynomial degrees $p=2,3,4$, and consider uniform refinements $h_J = h_0 \, 2^{-J}$ with $h_0 = 2^{-1}$ for $J = 1,\dots,5$.

\medskip
\noindent
Even though the \vanishingconditions are not satisfied, the numerical results still exhibit optimal convergence rates $\mathcal{O}(h_J^{p+1})$ in the $L^2(Q_T)$ norm, as shown in Figure~\ref{fig:exp2x(1-x)}, for both the full- and the sparse-grid methods. The right panel confirms that, as the mesh is refined and the number of degrees of freedom grows, the sparse-grid approximation achieves asymptotically the same accuracy as the full-grid solution at a lower computational cost. 

\noindent 
Although conditions~\eqref{eq:24}--\eqref{eq:25} do not appear necessary here to obtain the convergence rates predicted by Theorem~\ref{th:1}, the example in \S\ref{sec:example4} below shows that their absence can lead to deteriorated convergence rates.

\begin{figure}[htb!]
    \centering
    \includegraphics[width=0.49\linewidth]{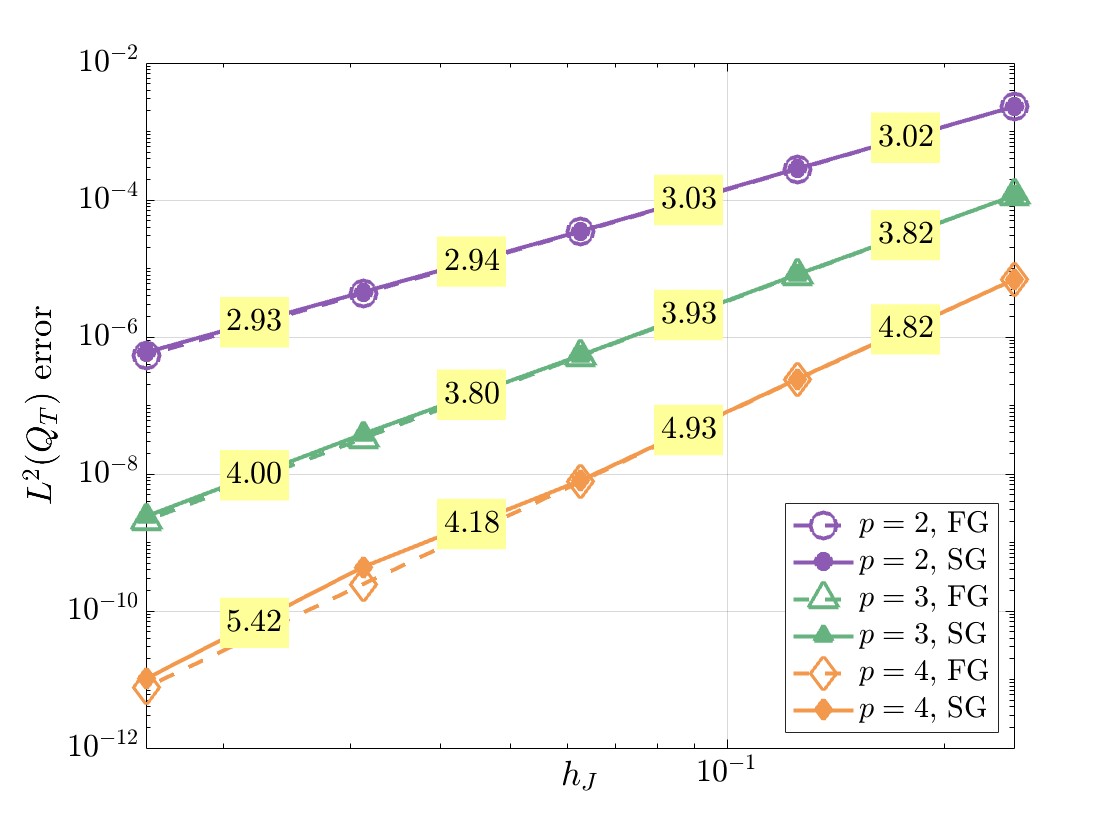}
    \includegraphics[width=0.49\linewidth]{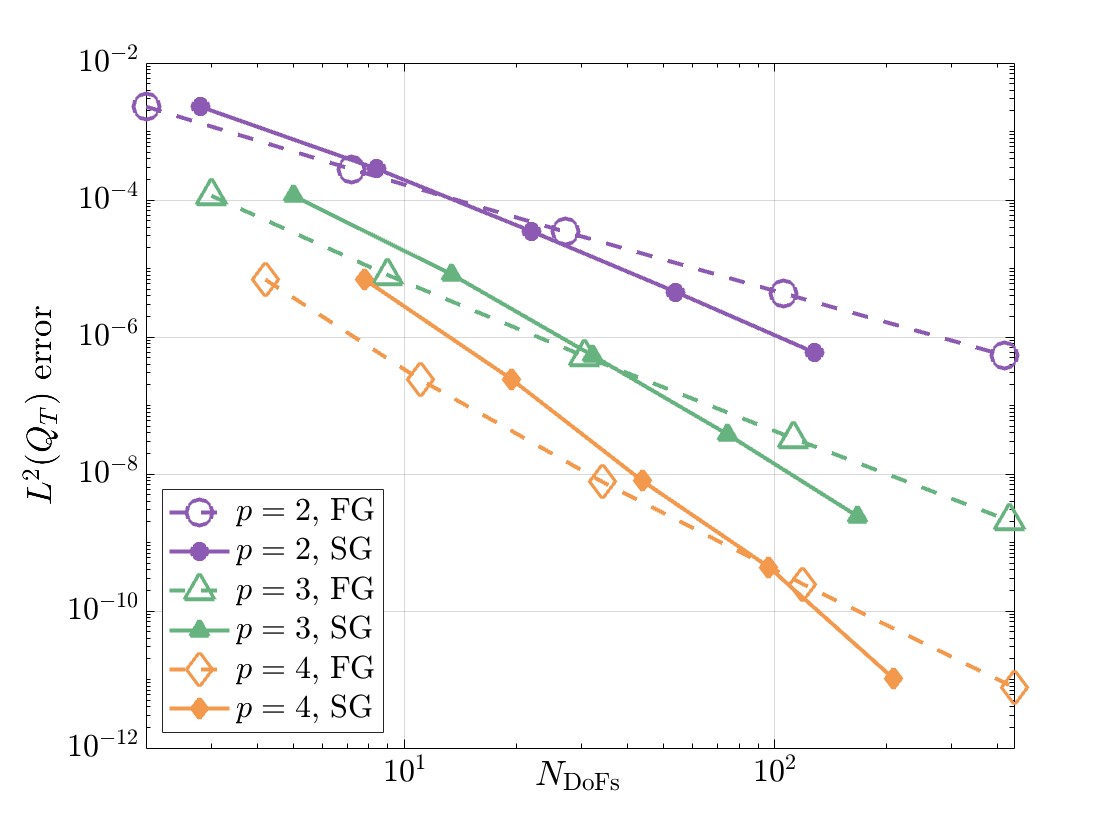}
    \caption{Example in \S\ref{sec:example2}. $L^2(Q_T)$ relative errors with maximal-regularity splines in both space and in time for the exact solution as in~\eqref{eq:exp52a}, plotted with respect to $h_J$ (left panel) and $N_{\mathrm{DoFs}}$ (right panel). Comparison between full-grid (FG) and sparse-grid (SG) approximations.}
    \label{fig:exp2x(1-x)}
\end{figure}

\subsection{\texorpdfstring{$(1+1)$}{1+1}-dimensional traveling wave solution}\label{sec:example3} 

We consider the model problem \eqref{eq:1} with spatial domain $\Omega = (-1,1)$, final time $T=1$, wave velocity $c=\frac45$, and homogeneous source term $f=0$. The initial data $u_0$ and $v_0$ are chosen such that the exact solution is
\begin{equation}\label{eq:exp3}
	u(x,t) = \omega(x-ct)-\omega(2-x-ct),
\end{equation}
where $\omega(s):=e^{-16(s-0.6)^2}$. Homogeneous Dirichlet conditions hold exactly at $x=1$ by construction, while at $x=-1$ they hold in practice since $|u(-1,t)| \leq 10^{-17}$ for all $t \in (0,T)$. This wave field, plotted in Figure~\ref{fig:Energy}, top row, represents a smooth wave traveling and reflecting off the right Dirichlet boundary.

\medskip
\noindent
We discretize with maximal-regularity splines in both space and time, with polynomial degrees $p=2,3,4$.

\medskip
\noindent
We first test how well the sparse-grid method preserves the energy of the solution. The total energy of a function $v\in\mathcal{V}(Q_T)$ at time $s\in [0,T]$ is defined by
\begin{equation*}
    \mathcal{E}(s;v):=\frac12 \Big(\|\partial_t v(\cdot,s)\|_{L^2(\Omega)}^2 +\|c(\cdot)\nabla_{\bx} v(\cdot,s)\|^2_{L^2(\Omega)} \Big).
\end{equation*} 
Figure~\ref{fig:Energy}, bottom row, shows the relative energy errors $|\mathcal{E}(t;u) - \mathcal{E}(t;P_{J,J}u)| / \mathcal{E}(t;u)$ and
$|\mathcal{E}(t;u) - \mathcal{E}(t;u_J^{CF})| / \mathcal{E}(t;u)$ for the full-grid approximation $P_{J,J}u$ (left) and the sparse-grid approximation
$u_J^{CF}$ (right), respectively, for $p=2,3,4$ and $h_{6}= h_0\,2^{-6}$, $h_0=2^{-1}$, at $800$ equispaced time instants in $[0,T]$. The energy error does not increase in time and is uniformly bounded for both methods.
 \begin{figure}[htb!]
    \centering
     \includegraphics[width=0.6\linewidth]{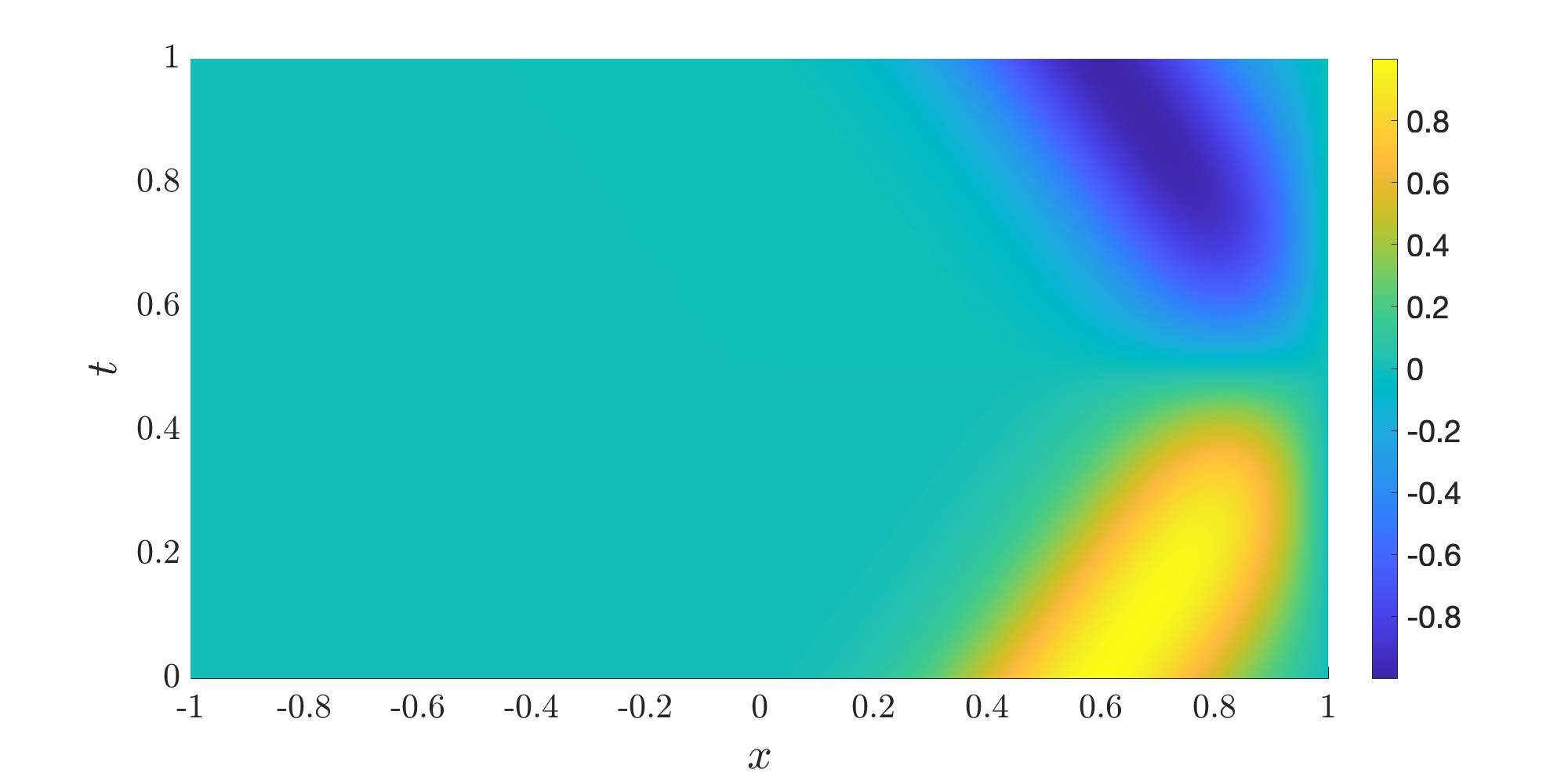}
     \includegraphics[width=0.49\linewidth]{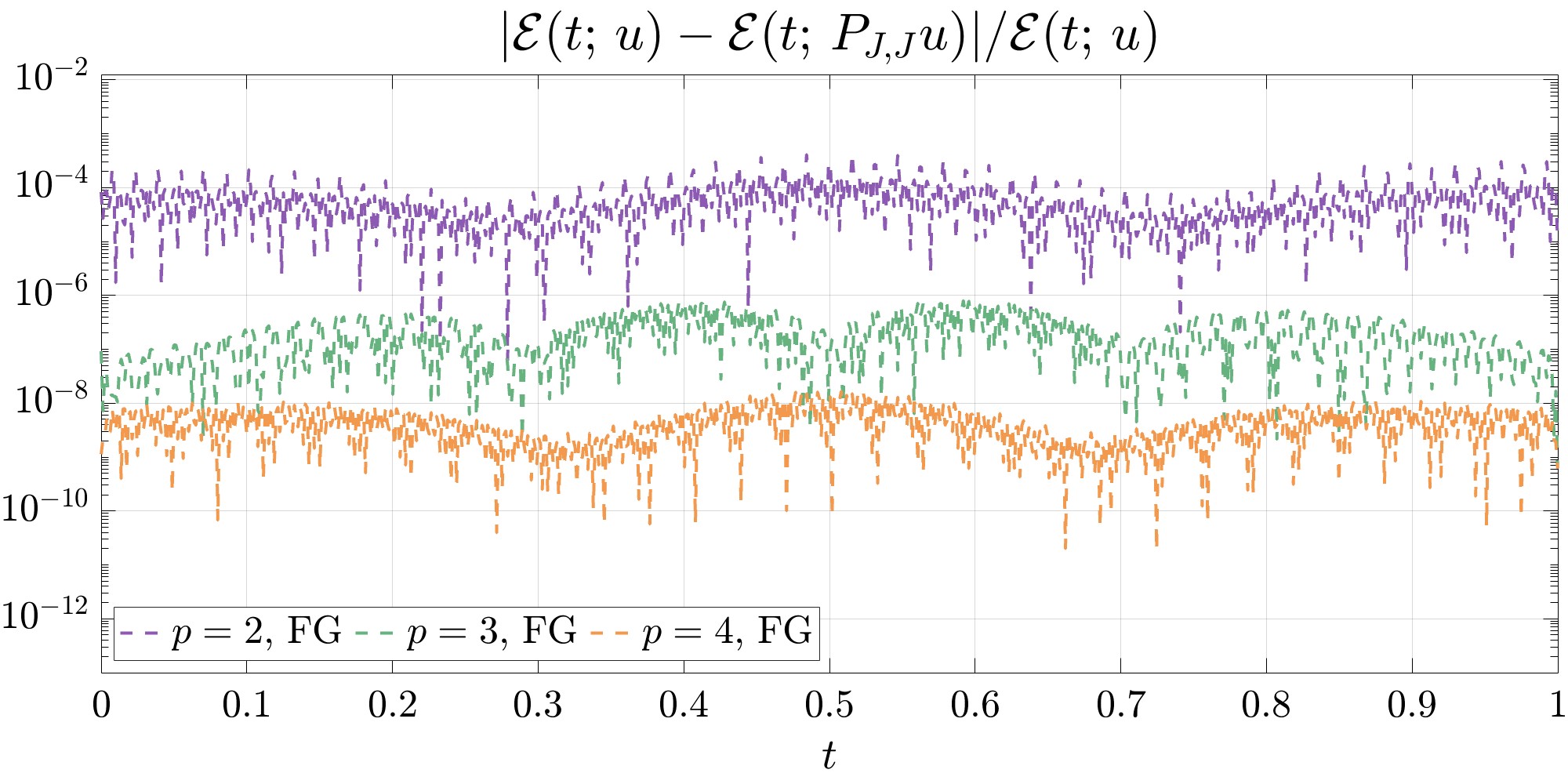}
     \includegraphics[width=0.49\linewidth]{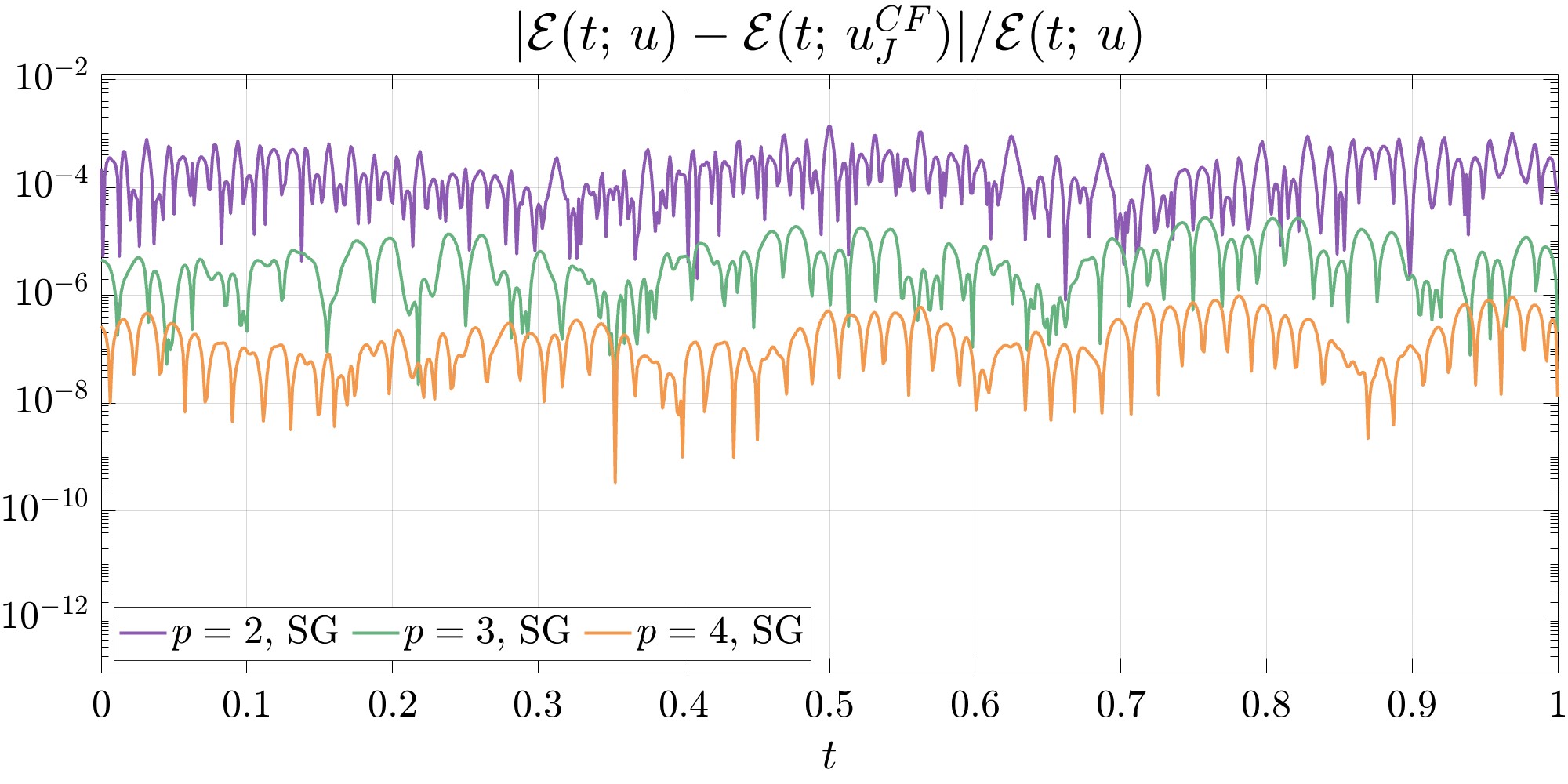}
     \caption{Example in \S\ref{sec:example3}. Top row: exact solution~\eqref{eq:exp3}. Bottom row: relative energy errors $|\mathcal{E}(t;\,u)-\mathcal{E}(t;\,P_{J,J}u)|/\mathcal{E}(t;\,u)$ (full-grid) and $|\mathcal{E}(t;\,u)-\mathcal{E}(t;\,u_J^{CF})|/\mathcal{E}(t;\,u)$ (sparse-grid) for $p=2,3,4$ and $h_6=h_0 2^{-6}$ with $h_0= 2^{-1}$ at 800 equispaced instants in $[0,T]$.}
     \label{fig:Energy}
 \end{figure}

\medskip
\noindent
For the same test case, we also test the effect of the choice of the coarsest mesh 
size~$h_0$ on the sparse-grid approximation. We consider three sequences 
of uniform refinements, each parametrized by~$J$, with the same set of finest mesh sizes $\{h_J = h_0 2^{-J}\}$ but different values of~$h_0$: 
$h_0 = 2^{-1}$ and $J \in \{2,\dots,5\}$ (left plot), 
$h_0 = 2^{-2}$ and $J \in \{1,\dots,4\}$ (central plot), 
$h_0 = 2^{-3}$ and $J \in \{0,\dots,3\}$ (right plot).
Since the sets of finest mesh sizes coincide, the full-grid 
solutions~$P_{J,J}u$ are identical across the three cases and serve as 
a common reference; only the coarser meshes entering the combination 
formula~\eqref{eq:19} differ, as illustrated in the bottom row of 
Figure~\ref{fig:3}. The relative $L^2(Q_T)$ errors are plotted against the mesh size $h_J=h_0 2^{-J}$ in the first row.

\medskip
\noindent
For sparse grids with coarsest
mesh size $h_0 = 2^{-1}$ (left plot), the sparse-grid method does not exhibit the expected convergence rate $\mathcal{O}(h_{J}^{p+1})$ for any of the considered values $p=2,3,4$. A refinement of the initial mesh from $h_0=2^{-1}$ to $h_0=2^{-2}$ (central plot) improves the observed convergence, and further refinement to $h_0=2^{-3}$ (right plot) yields significantly better results. These results suggest that, as expected, refining $h_0$ mitigates the pre-asymptotic behavior associated with the coarse meshes, which is inevitably present in the combination formula \eqref{eq:19}. An interesting open question is to identify criteria for the choice of the coarsest mesh sizes~$h_0^{\bx}$ and~$h_0^t$ ensuring the onset of the asymptotic convergence regime.

\begin{figure}[!htbp]
    \includegraphics[width=\linewidth]{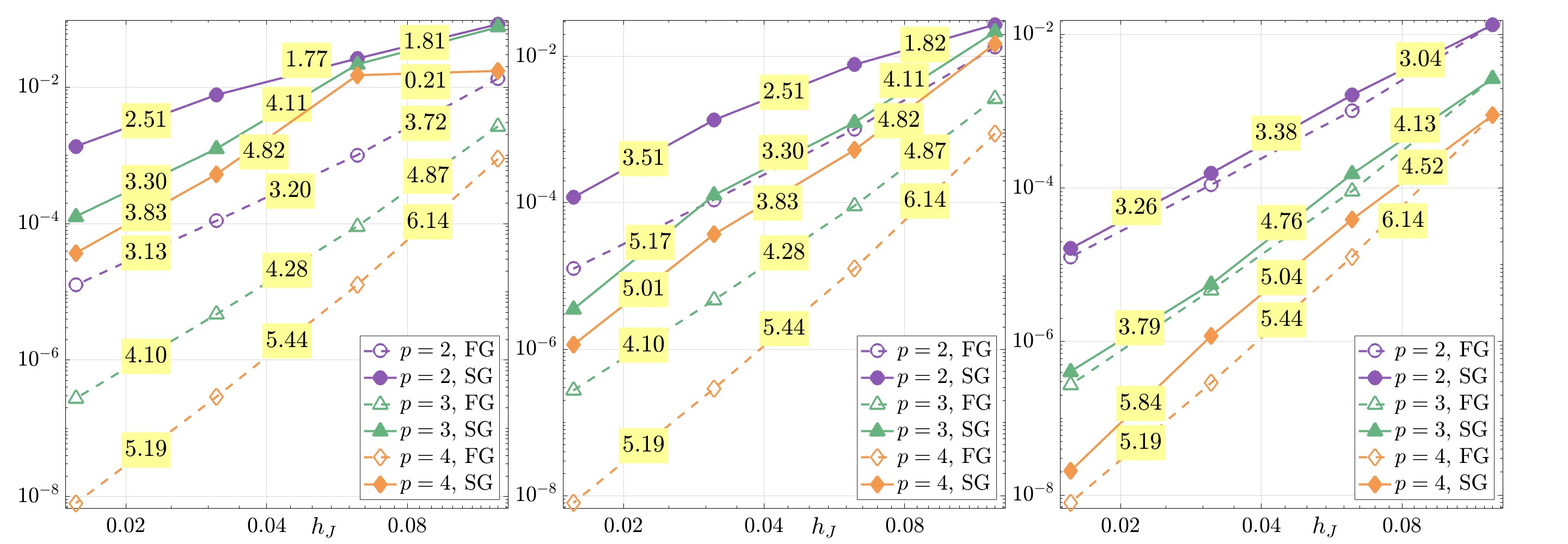}
\newcommand{\SGdiamond}[3]{%
  \fill[#3] (#1, #2-0.18) -- (#1+0.18, #2) -- (#1, #2+0.18) -- (#1-0.18, #2) -- cycle;
}
\newcommand{\SGtrapez}[3]{\draw[thick,rounded corners,#3]
(#1-\SGoffset+0.5,#2-\SGoffset)--(1.5+#1+\SGoffset,#2-\SGoffset)--
(#2-\SGoffset,1.5+#1+\SGoffset)--(#2-\SGoffset,#1-\SGoffset)--
(#1-\SGoffset,#2-\SGoffset)--(#1-\SGoffset+0.5,#2-\SGoffset);}
\centering
\begin{tikzpicture}[scale=.6] 
\foreach \j in {0,...,6}{
\draw[thin,lightgray](-.3,\j)--(6,\j);
\draw[thin,lightgray](\j,-.3)--(\j,6);
\draw(\j,-0.7)node{$2^{-\j}$};
\draw(-0.7,\j)node{$2^{-\j}$};}
\foreach \j in {0,...,6}{\foreach \k in {0,...,6}{\fill(\j,\k) circle(.1);}}
\draw[->](0,0)--(7,0)node[anchor=north]{$h_{\jx}$};
\draw[->](0,0)--(0,7)node[anchor=east]{$h_{\jt}$};
\def\SGoffset{.3}\SGtrapez21{blue}
\def\SGoffset{.2}\SGtrapez31{red}
\def\SGoffset{.3}\SGtrapez41{cyan}
\def\SGoffset{.2}\SGtrapez51{brown}
\SGdiamond{3}{3}{blue}
\SGdiamond{4}{4}{red}
\SGdiamond{5}{5}{cyan}
\SGdiamond{6}{6}{brown}
\end{tikzpicture}
\begin{tikzpicture}[scale=.6] 
\foreach \j in {0,...,6}{
\draw[thin,lightgray](-.3,\j)--(6,\j);
\draw[thin,lightgray](\j,-.3)--(\j,6);
\draw(\j,-0.7)node{$2^{-\j}$};
\draw(-0.7,\j)node{$2^{-\j}$};}
\foreach \j in {0,...,6}{\foreach \k in {0,...,6}{\fill(\j,\k) circle(.1);}}
\draw[->](0,0)--(7,0)node[anchor=north]{$h_{\jx}$};
\draw[->](0,0)--(0,7)node[anchor=east]{$h_{\jt}$};
\def\SGoffset{.3}\SGtrapez22{blue}
\def\SGoffset{.2}\SGtrapez32{red}
\def\SGoffset{.3}\SGtrapez42{cyan}
\def\SGoffset{.2}\SGtrapez52{brown}
\SGdiamond{3}{3}{blue}
\SGdiamond{4}{4}{red}
\SGdiamond{5}{5}{cyan}
\SGdiamond{6}{6}{brown}
\end{tikzpicture}
\centering
\begin{tikzpicture}[scale=.6] 
\foreach \j in {0,...,6}{
\draw[thin,lightgray](-.3,\j)--(6,\j);
\draw[thin,lightgray](\j,-.3)--(\j,6);
\draw(\j,-0.7)node{$2^{-\j}$};
\draw(-0.7,\j)node{$2^{-\j}$};}
\foreach \j in {0,...,6}{\foreach \k in {0,...,6}{\fill(\j,\k) circle(.1);}}
\draw[->](0,0)--(7,0)node[anchor=north]{$h_{\jx}$};
\draw[->](0,0)--(0,7)node[anchor=east]{$h_{\jt}$};
\draw[thick, blue](3,3) circle(.25);
\def\SGoffset{.2}\SGtrapez33{red}
\def\SGoffset{.3}\SGtrapez43{cyan}
\def\SGoffset{.2}\SGtrapez53{brown}
\SGdiamond{3}{3}{blue}
\SGdiamond{4}{4}{red}
\SGdiamond{5}{5}{cyan}
\SGdiamond{6}{6}{brown}
\end{tikzpicture}
\caption{Example in \S\ref{sec:example3}. Top row: $L^2(Q_T)$ relative errors with maximal-regularity splines both in space and in time for the  solution in~\eqref{eq:exp3} and Figure~\ref{fig:Energy}, plotted with respect to $h_J$ with initial level $h_0=2^{-1}$ (left), $h_0=2^{-2}$ (middle), $h_0=2^{-3}$ (right). Comparison between full-grid (FG) and sparse-grid (SG) approximations. Bottom row: meshes used in the corresponding upper plots. Each diamond denotes a full-grid mesh with size $h_J$ both in space and time, while the trapezoid of the same color collects the meshes used in the combination formula for the corresponding refinement level.
Blue diamonds and trapezoids correspond to the rightmost markers in the error plots, red ones correspond to the second-rightmost etc.}
\label{fig:3}
\end{figure}

\subsection{\texorpdfstring{$(2+1)$}{2+1}-dimensional examples}\label{sec:example4} 

We consider here two $(2+1)$-dimensional problems, both with spatial domain $\Omega = (0,1)^2$, final time $T=1$, and wave velocity $c = 1$, with data such that the exact solutions are
\begin{align}
    u_a(x,y,t) &= t^6\sin(\pi x)\sin(\pi y)\sin(txy), \label{eq:exp4_van} \\
    u_b(x,y,t) &= \sin(\pi x)\sin(\pi y) \sin^2(txy),
    \label{eq:exp4_nonvan}
\end{align}
respectively. Both solutions are smooth. The wave field $u_a$ satisfies the \vanishingconditions \eqref{eq:24} at the initial time for all $p \le 3$, while $u_b$ fails to satisfy them for all  $p \ge 1$. We set $h_0=2^{-1}$ and consider a sequence of uniform refinements such that $h_J = h_0 2^{-J}$ for $J=1,\dots,4$. We choose $V^{t}_{j_t}(0,T)$ as the space of maximal-regularity splines in time and $V^{\bx}_{\jx}(\Omega)$ as the space of $C^0$ finite elements on sequences of shape-regular, simplicial nested meshes obtained by successive mesh halving. Polynomial degrees $p=2,3$ are considered.

\noindent 
Figures~\ref{fig:2Dexample_van} and \ref{fig:2Dexample_nonvan} show relative $L^2(Q_T)$ errors versus $h_J$ and $N_{\mathrm{DoFs}}$ for the full- and sparse-grid schemes for $u_a$ and $u_b$, respectively. For~$u_a$, the left panel of Figure~\ref{fig:2Dexample_van} shows that both the full- and sparse-grid methods converge with the optimal rate $\mathcal{O}(h_J^{p+1})$. The sparse-grid scheme achieves better accuracy for a comparable number of degrees of freedom, as shown in the right panel, in agreement with the complexity estimate~\eqref{eq:27a}.
Specifically, the sparse-grid method yields error decay rates of order $\mathcal{O}((N_{\mathrm{DoFs}})^{-\frac{p+1}{d}}(\log_2 (N_{\mathrm{DoFs}})))$, whereas the full-grid method exhibits rates of order $\mathcal{O}((N_{\mathrm{DoFs}})^{-\frac{p+1}{d+1}})$.

\begin{figure}[htb!]
\includegraphics[width=0.49\linewidth]{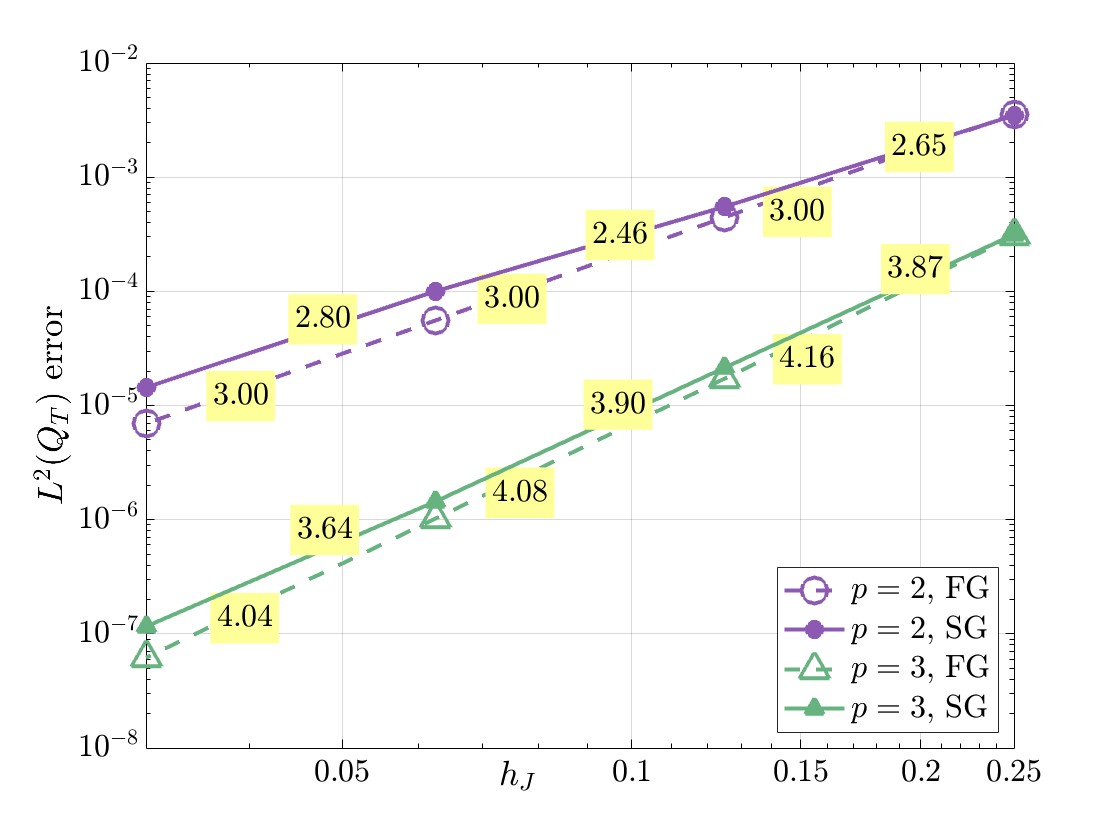}
\includegraphics[width=0.49\linewidth]{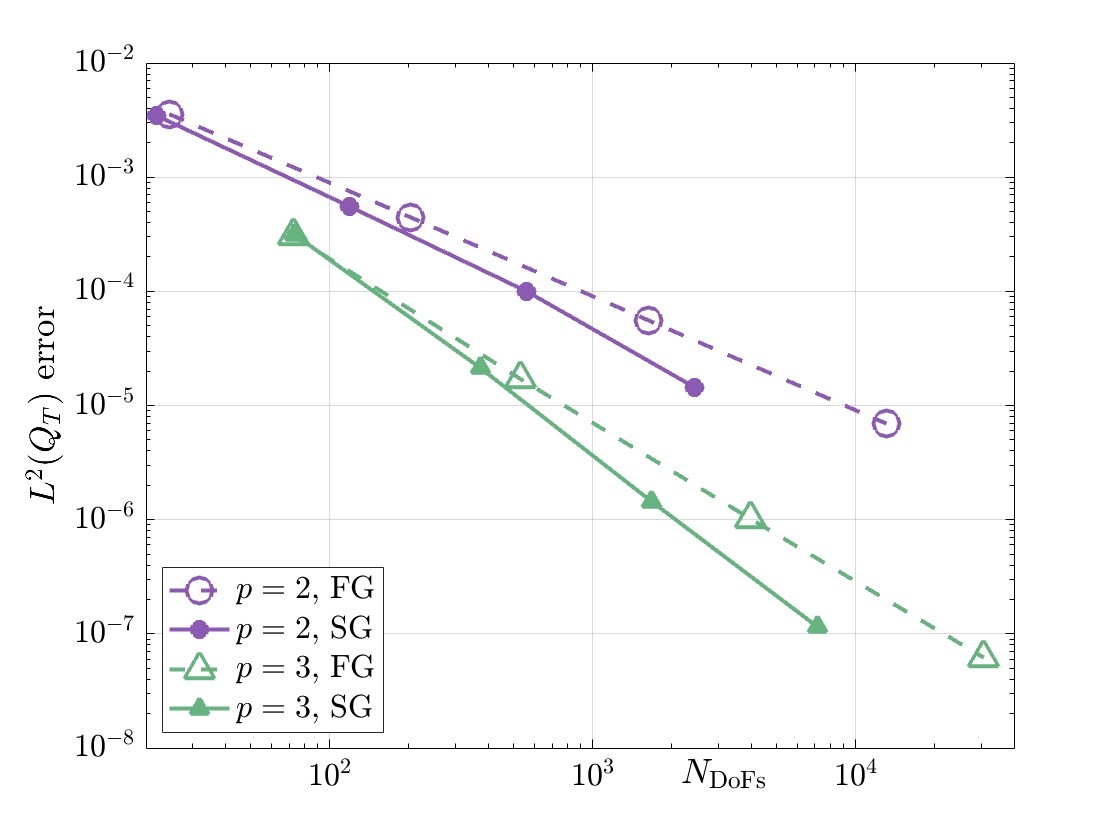}
\caption{Example in \S\ref{sec:example4}. $L^2(Q_T)$ relative errors for the exact solution $u_a$ as in~\eqref{eq:exp4_van}, plotted with respect to $h_J$ (left panel) and $N_{\mathrm{DoFs}}$ (right panel). Comparison between full-grid (FG) and sparse-grid (SG) approximations.}
\label{fig:2Dexample_van}
\end{figure}

\noindent
For the solution $u_b$, the left panel of Figure~\ref{fig:2Dexample_nonvan} shows that both methods converge at the optimal rate for $p=2$, whereas for $p=3$, the convergence rate of the sparse-grid scheme degrades. This example illustrates that certain initial \vanishingconditions are essential for achieving optimal rates. In particular, the numerical results indicate that a result comparable to Theorem~\ref{th:1} may fail if all initial \vanishingconditions are omitted. It therefore remains of interest to identify minimal sufficient conditions ensuring optimal convergence of the combination formula.               

\begin{figure}[htb!]
\includegraphics[width=0.49\linewidth]{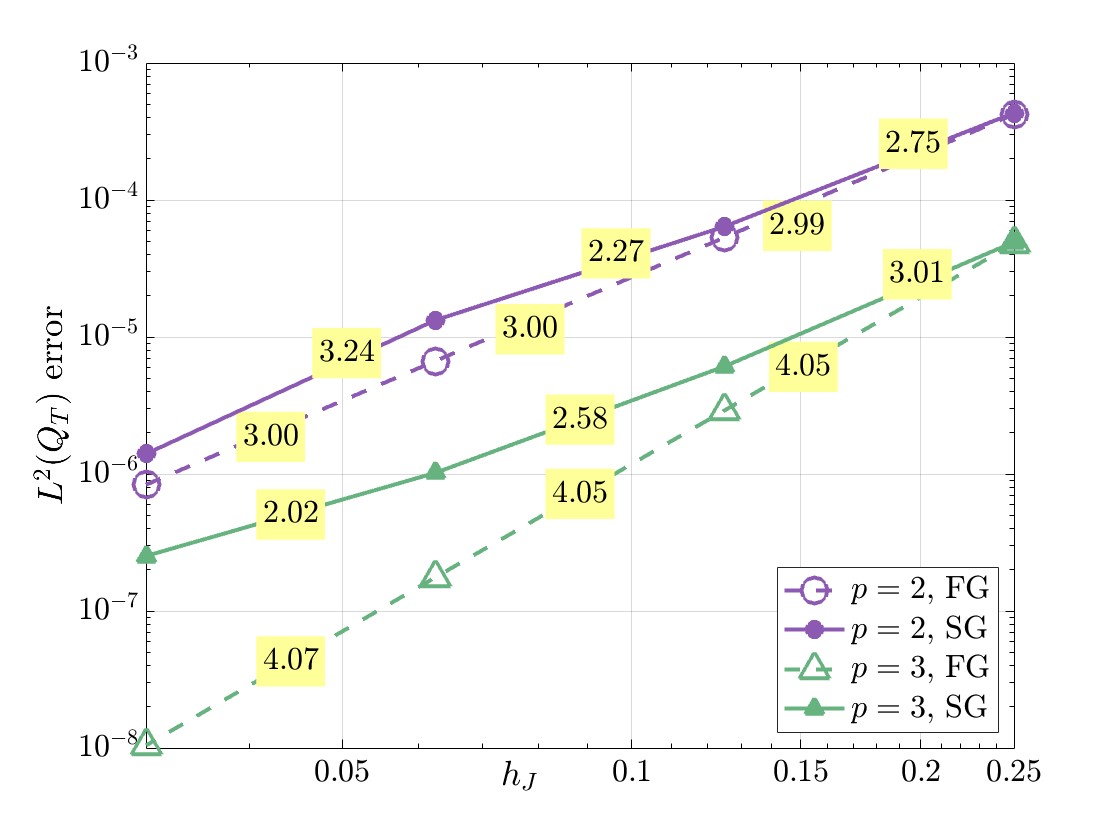}
\includegraphics[width=0.49\linewidth]{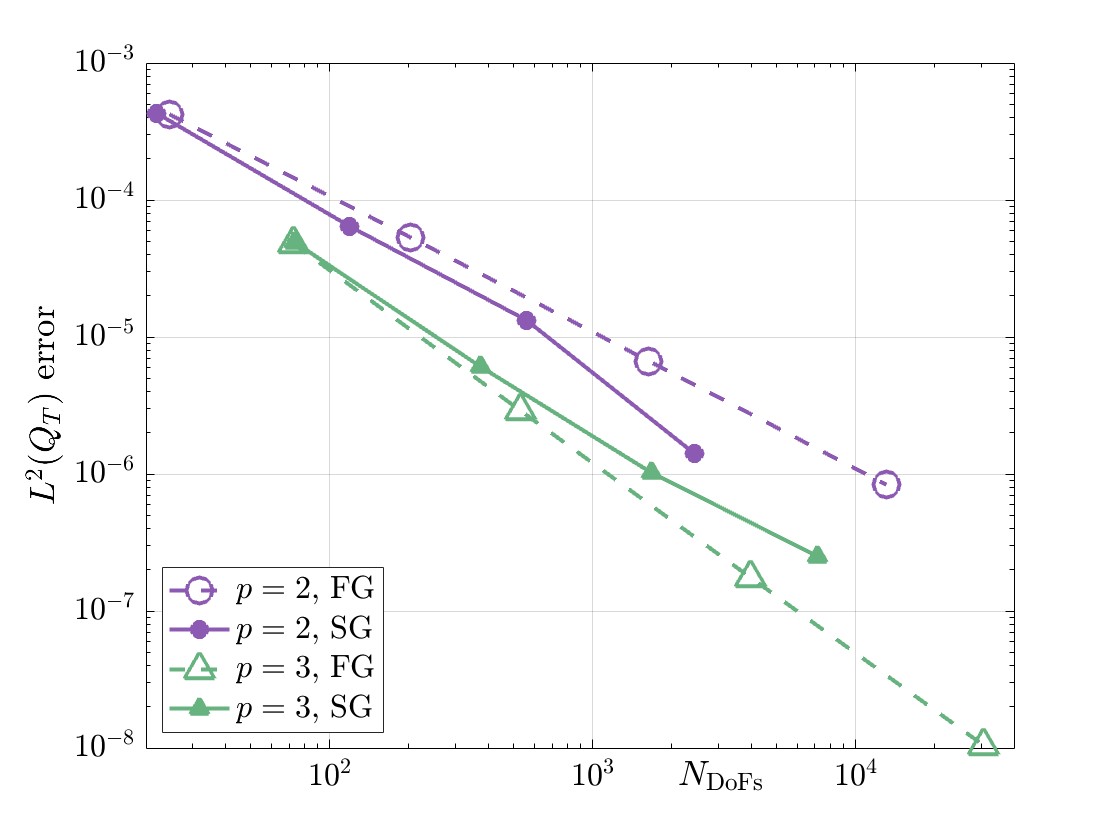}
\caption{Example in \S\ref{sec:example4}. $L^2(Q_T)$ relative errors for the exact solution $u_b$ as in~\eqref{eq:exp4_nonvan}, plotted with respect to $h_J$ (left panel) and $N_{\mathrm{DoFs}}$ (right panel). Comparison between full-grid (FG) and sparse-grid (SG) approximations.}
\label{fig:2Dexample_nonvan}
\end{figure}

\section{Conclusion}\label{sec:conclusion}
We have proposed and analyzed a fast space--time numerical scheme for the linear wave equation based on the combination technique on sparse grids. Based on an unconditionally stable second-order-in-time conforming Galerkin formulation, the method reduces the number of degrees of freedom compared to full tensor-product discretizations while preserving optimal convergence rates. Furthermore, the combination formula naturally enables efficient parallelization, which represents a key advantage over standard space--time discretizations.

\smallskip
\noindent 
We have established theoretical estimates in the $L^2(Q_T)$ norm, showing that optimal accuracy is achieved under suitable mixed-regularity assumptions and \vanishingconditions for the initial data and the source term. Theoretical findings have been validated through numerical experiments in both $(1+1)$- and $(2+1)$-dimensions. The numerical results confirm the predicted order of convergence and highlight the computational efficiency of the sparse-grid approach. Furthermore, our tests indicate that while some of the theoretical \vanishingconditions might be partially relaxed in practice without loss of optimality, they cannot be completely ignored without incurring an order reduction.
\medskip

\appendix

\begin{table}[ht!] 
\centering
	\begin{tblr}{
			colspec = {|c |c |c|},
			row{1} = {font=\bfseries},
			width = \linewidth}
		\hline
		\textbf{Symbol} & \textbf{Meaning} & \textbf{Definition}\\
		\hline
        $\Omega, T, Q_T = \Omega \times (0,T)$ & Spatial domain, final time, space--time cylinder & \S\ref{sec:notation}\\
$c, f, u_0, v_0$ & Wave velocity, source term, initial data & \eqref{eq:1}\\
        $H_0^1(\Omega), H^{s_{\bx}}_{0}(\Omega)$ &  Spatial spaces with zero boundary trace &  \S\ref{sec:notation}\\
        $H^2_{0,\bullet}(0,T), H^{s_t}_{0,\bullet}(0,T)$ & Temporal spaces with zero initial condition & \S\ref{sec:notation}\\
        $(u,v)_{L^2_e(0,T)},\| u \|_{L^2_e(0,T)}$ & Exponentially weighted $L^2(0,T)$ scalar product and norm  & \S\ref{sec:notation}\\
        $(u,v)_{L^2_e(Q_T)},\| u \|_{L^2_e(Q_T)}$ & Exponentially weighted $L^2(Q_T)$ scalar product and norm & \S\ref{sec:notation}\\
        $ \V(Q_T), \|\cdot\|_{\V(Q_T)} $ &Trial/test space and energy norm & \eqref{eq:4}, \eqref{eq:5}\\
        $\A$ & Bilinear form &\eqref{eq:6}\\
        $\jx, \jt$ &  Space and time refinement level indices &\S\ref{sec:numericalscheme}\\
        $\Vx, \Vt$ & Discrete spatial and temporal spaces &\S\ref{sec:numericalscheme}\\
        $h_{\jx}, h_{\jt}, h_0^{\bx}, h_0^t$ & Mesh parameters and initial mesh sizes &\S\ref{sec:numericalscheme}\\
        $P_{j_{\bx},j_t} u $ & Galerkin projection onto $\Vx \otimes \Vt$ & \eqref{eq:10}\\
        $\Pg_{j_{\bx}}, \Ppt_{j_t}$ & Projection operators &\eqref{eq:11}, \eqref{eq:12}\\
        $p_{\bx}, p_t$ & Polynomial approximation degrees & \S\ref{sec:numericalscheme}\\
		$u^{CF}_J, J$ & Combination formula, fine level index & \eqref{eq:19}\\
		$\Delta_{j_{\bx},j_t}^P u$ & Space--time detail operator &\eqref{eq:20}\\
        $N_{\mathrm{DoFs}}^{\mathrm{full}}, N_{\mathrm{DoFs}}^{\mathrm{sparse}}$ & Total number of degrees of freedom & \S\ref{sec:complexity}\\
        $\mathcal L$ & Differential operator in space & Thm.~\ref{th:1} \\
		$\Pinfjt,\Pjxinf$ & Semidiscrete projections & \eqref{eq:26}, \eqref{eq:27} \\
        $\{(\lambda_m,\phi_m)\}_{m=1}^\infty$ & Eigenpairs of $\mathcal{L}$ with Dirichlet boundary conditions & \S\ref{sec:stab}\\
        $\mathcal{H}^{s_{\bx}}(\Omega),\, \|\cdot\|_{\mathcal{H}^{s_{\bx}}(\Omega)}$ & Spectral Sobolev space associated with $\mathcal{L}$ and its norm & \S\ref{sec:stab}\\
		$\mathcal{B}_{j_t}$ & Discrete temporal operator & \eqref{eq:40}\\
		$\mathcal{L}_{j_{\bx}}$ & Discrete spatial operator &\eqref{eq:44}\\
		$\Delta_{j_{\bx},j_t}^Qu $ & Detail projection operator & \eqref{eq:66}\\
		\hline
 	\end{tblr}
	\caption{Summary of the main notation and symbols.
    }
	\label{tab:3}
\end{table}

\section{Semidiscrete error estimates} \label{sec:errest}
In this section, we derive estimates for $(P_{\infty,j_t}-\Id)u$ and $(P_{j_{\bx},\infty}-\Id)u$. 
In order to do so, we first prove two estimates analogous to \eqref{eq:17}. The proof follows a similar argument to that of \cite[Lemma 4.1]{FerrariPerugia2026}; we report it here for the sake of completeness.
\begin{proposition}\label{lem:1}
Let~$u \in \mathcal{V}(Q_T)$, and let~$\Pinfjt$ and $\Pjxinf$ be the unique solutions to the discrete problems~\eqref{eq:26} and~\eqref{eq:27}, respectively.
\begin{itemize}
\item Suppose that $\nabla_{\bx} \cdot (c^2 \nabla_{\bx} u) \in L^2(\Omega) \otimes H^2(0,T)$. Then, the following estimate holds: 
\begin{align}\label{eq:31}
    \| (P_{\infty,j_t} - \Id^{\bx} \otimes \Pi_{j_t}^{\partial_t^2}) u \|_{\mathcal{V}(Q_T)} & \lesssim \| (\Id^{\bx} \otimes \Ppt_{j_t} - \Id) \nabla_{\bx} \cdot (c^2 \nabla_{\bx} u) \|_{L^2(Q_T)}.
\end{align}
\item Suppose that $u \in H^1(\Omega) \otimes H^2(0,T)$. Then, the following estimate holds: 
\begin{align} \label{eq:32}
    \| (P_{j_{\bx},\infty} - \Pi_{j_{\bx}}^{\nabla_{\bx}} \otimes \Id^t) u \|_{\mathcal{V}(Q_T)} & \lesssim \| (\Pg_{j_{\bx}} \otimes \Id^t - \Id) \partial_t^2 u \|_{{L^2(Q_T)}} + \|(\Pg_{j_{\bx}} - \Id^{\bx}) \partial_t u(\cdot,0)\|_{L^2(\Omega)}.
\end{align}
\end{itemize}
\end{proposition}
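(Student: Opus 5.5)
We prove both estimates in the same way as the proof of \eqref{eq:56}--\eqref{eq:57}, that is, by combining coercivity \eqref{eq:8} with Galerkin orthogonality and the defining relations of the one-dimensional projectors. For \eqref{eq:31}, set $e := (P_{\infty,j_t} - \Id^{\bx} \otimes \Pi_{j_t}^{\partial_t^2}) u$. This function lies in $H^1_0(\Omega)\otimes \Vt$, so it is an admissible test function in \eqref{eq:26}. Coercivity and \eqref{eq:26} then give
\begin{equation*}
\tfrac{1}{2e}\|e\|^2_{\mathcal{V}(Q_T)} \le \A(e,e) = \A\big((\Id - \Id^{\bx}\otimes\Pi_{j_t}^{\partial_t^2})u, e\big).
\end{equation*}
We expand the right-hand side using \eqref{eq:6}. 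The first two terms, $(\partial_t^2(\cdot),\partial_t e)_{L^2_e(Q_T)}$ and $(\partial_t(\cdot)(\cdot,0),\partial_t e(\cdot,0))_{L^2(\Omega)}$, vanish together: since $e(\bx,\cdot)\in\Vt$ for a.e.\ $\bx$, this follows from \eqref{eq:12} applied pointwise in $\bx$ (more precisely, via a basis expansion as in \eqref{eq:29}).

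Only the spatial term $(c^2\nabla_{\bx}(\Id-\Id^{\bx}\otimes\Pi_{j_t}^{\partial_t^2})u, \nabla_{\bx}\partial_t e)_{L^2_e(Q_T)}$ remains. We integrate by parts in space, using $e\in H^1_0(\Omega)$ for a.e.\ $t$. By the commutativity noted in the footnote to \eqref{eq:17}, this term equals
\begin{equation*}
-\big((\Id-\Id^{\bx}\otimes\Pi_{j_t}^{\partial_t^2})\nabla_{\bx}\cdot(c^2\nabla_{\bx}u), \partial_t e\big)_{L^2_e(Q_T)}.
\end{equation*}
The integration by parts and the commutation are justified by the assumption $\nabla_{\bx}\cdot(c^2\nabla_{\bx}u)\in L^2(\Omega)\otimes H^2(0,T)$, which is exactly what is needed to apply $\Pi_{j_t}^{\partial_t^2}$ to it. Cauchy--Schwarz, the bound $\|\partial_t e\|_{L^2(Q_T)}\le \sqrt{T}\|e\|_{\mathcal{V}(Q_T)}$, and the equivalence of the $L^2_e$ and $L^2$ norms then yield \eqref{eq:31}.

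For \eqref{eq:32}, we argue symmetrically with $e := (P_{j_{\bx},\infty} - \Pi_{j_{\bx}}^{\nabla_{\bx}} \otimes \Id^t) u\in \Vx\otimes H^2_{0,\bullet}(0,T)$. Here we need $\Pi^{\nabla_{\bx}}_{j_{\bx}}u(\cdot,0)=0$, which holds because $u(\cdot,0)=0$. Coercivity and \eqref{eq:27} give $\A(e,e)=\A(((\Id^{\bx}-\Pi_{j_{\bx}}^{\nabla_{\bx}})\otimes\Id^t)u, e)$. In this case the gradient term vanishes by \eqref{eq:11}, since $\partial_t e(\cdot,t)\in\Vx$. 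The remaining two terms are bounded by Cauchy--Schwarz:
\begin{equation*}
\|(\Pi_{j_{\bx}}^{\nabla_{\bx}}\otimes\Id^t-\Id)\partial_t^2u\|_{L^2(Q_T)}\|\partial_t e\|_{L^2(Q_T)} + \|(\Pi_{j_{\bx}}^{\nabla_{\bx}}-\Id^{\bx})\partial_tu(\cdot,0)\|_{L^2(\Omega)}\|\partial_t e(\cdot,0)\|_{L^2(\Omega)}.
\end{equation*}
Both factors involving $e$ are controlled by $\|e\|_{\mathcal{V}(Q_T)}$, which proves \eqref{eq:32}. Here commuting $\partial_t$ with $\Pi_{j_{\bx}}^{\nabla_{\bx}}\otimes \Id^t$ requires $u\in H^1(\Omega)\otimes H^2(0,T)$.

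The main technical point is the rigorous justification, for the semidiscrete (infinite-dimensional in one variable) spaces, that the orthogonality relations \eqref{eq:11}--\eqref{eq:12} can be applied fiberwise and commute with the differential operators in the other variable. We handle this through the finite basis expansions from the proof of Proposition~\ref{prop:1}, which reduce every identity to finitely many coefficient functions in $H^1_0(\Omega)$ (respectively $H^2_{0,\bullet}(0,T)$).
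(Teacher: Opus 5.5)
Your proposal is correct and follows essentially the same route as the paper: coercivity \eqref{eq:8} with Galerkin orthogonality, then elimination of the temporal terms via \eqref{eq:12} (respectively of the gradient term via \eqref{eq:11}), integration by parts in space combined with the commutativity of the projectors, and Cauchy--Schwarz. Your explicit check that $(\Pi_{j_{\bx}}^{\nabla_{\bx}}\otimes\Id^t)u$ vanishes at $t=0$, so that the error is an admissible test function in \eqref{eq:27}, is a detail the paper leaves implicit.
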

\begin{proof}
We start with~\eqref{eq:31}. We use the coercivity in \eqref{eq:8} and consistency to obtain
\begin{equation} \label{eq:33}
\begin{aligned}
    \| (P_{\infty,j_t} - \Id^{\bx} \otimes \Ppt_{j_t}) u \|^2_{\mathcal{V}(Q_T)} & \lesssim \A((P_{\infty,j_t} - \Id^{\bx} \otimes \Ppt_{j_t}) u, (P_{\infty,j_t} - \Id^{\bx} \otimes \Ppt_{j_t}) u)
    \\ & = \A((\Id - \Id^{\bx} \otimes \Ppt_{j_t}) u, (P_{\infty,j_t} - \Id^{\bx} \otimes \Ppt_{j_t}) u).
\end{aligned}
\end{equation}
Define $w_{\infty,j_t} := (P_{\infty,j_t} - \Id^{\bx} \otimes \Ppt_{j_t}) u$. Using the definitions of~$\A$ in \eqref{eq:6} and $\Ppt_{j_t}$ in \eqref{eq:12}, we write
\begin{equation} \label{eq:76}
\begin{aligned}
      \A((\Id - \Id^{\bx} \otimes \Ppt_{j_t}) u, w_{\infty,j_t}) & = (\partial_t^2 (\Id - \Id^{\bx} \otimes \Ppt_{j_t}) u, \partial_t w_{\infty,j_t})_{L_e^2(Q_T)}
      \\ & \quad + (\partial_t (\Id - \Id^{\bx} \otimes \Ppt_{j_t}) u(\cdot,0), \partial_t w_{\infty,j_t}(\cdot,0))_{L^2(\Omega)}
     \\ & \quad + (c^2\nabla_{\bx} (\Id - \Id^{\bx} \otimes \Ppt_{j_t}) u, \nabla_{\bx} \partial_t w_{\infty,j_t})_{L_e^2(Q_T)}
     \\ & = (c^2\nabla_{\bx} (\Id - \Id^{\bx} \otimes \Ppt_{j_t}) u, \nabla_{\bx} \partial_t w_{\infty,j_t})_{L_e^2(Q_T)}.
\end{aligned}
\end{equation}
Integration by parts in space and the commutativity of $\Ppt_{j_{t}}$ with $\nabla_{\bx} \cdot (c^2 \nabla_{\bx}\cdot )$ yield
\begin{equation} \label{eq:77}
\begin{aligned}
     (c^2 \nabla_{\bx} (\Id - \Id^{\bx} \otimes \Ppt_{j_t}) u, \nabla_{\bx} \partial_t w_{\infty,j_t})_{L_e^2(Q_T)} & = - (\nabla_{\bx} \cdot(c^2 \nabla_{\bx} (\Id-\Id^{\bx} \otimes \Ppt_{j_t}) u),\partial_t w_{\infty,j_t})_{L_e^2(Q_T)}
    \\ & = - ((\Id - \Id^{\bx} \otimes \Ppt_{j_t}) \nabla_{\bx} \cdot(c^2 \nabla_{\bx} u), \partial_t w_{\infty,j_t})_{L_e^2(Q_T)}.
\end{aligned}
\end{equation}
Combining \eqref{eq:76} and \eqref{eq:77} with the Cauchy--Schwarz inequality, and recalling the definition of the~$\mathcal{V}(Q_T)$-norm in~\eqref{eq:5}, we obtain
\begin{equation*}
\begin{aligned}
    & \A((\Id - \Id^{\bx} \otimes \Ppt_{\jt}) u, w_{\infty,j_t}) \lesssim \|(\Id - \Id^{\bx} \otimes \Ppt_{\jt}) \nabla_{\bx} \cdot (c^2 \nabla_{\bx} u) \|_{L^2(Q_T)} \| w_{\infty,\jt} \|_{\mathcal{V}(Q_T)}. 
     \end{aligned}
\end{equation*}
This, together with \eqref{eq:33}, yields \eqref{eq:31}.

\noindent 
For~\eqref{eq:32}, we use again the coercivity in \eqref{eq:8} and consistency to obtain
\begin{equation} \label{eq:78}
\begin{aligned}
    \| (P_{j_{\bx},\infty} - \Pi_{j_{\bx}}^{\nabla_{\bx}} \otimes \Id^t) u \|^2_{\mathcal{V}(Q_T)} 
   & \lesssim \A((P_{j_{\bx},\infty} - \Pi_{j_{\bx}}^{\nabla_{\bx}} \otimes \Id^t) u, (P_{j_{\bx},\infty} - \Pi_{j_{\bx}}^{\nabla_{\bx}} \otimes \Id^t) u)
   \\ & = \A((\Id-\Pi_{j_{\bx}}^{\nabla_{\bx}} \otimes \Id^t) u, (P_{j_{\bx},\infty} - \Pi_{j_{\bx}}^{\nabla_{\bx}} \otimes \Id^t) u).
\end{aligned}
\end{equation}
Define $w_{j_{\bx},\infty} := (P_{j_{\bx},\infty} - \Pi_{j_{\bx}}^{\nabla_{\bx}}\otimes \Id^t) u$. Using the definitions of~$\A$ in \eqref{eq:6} and $\Pi_{j_{\bx}}^{\nabla_{\bx}}$ in \eqref{eq:11}, we write
\begin{equation*}
\begin{aligned}
     \A((\Id-\Pi_{j_{\bx}}^{\nabla_{\bx}} \otimes \Id^t) u, w_{j_{\bx},\infty}) & = (\partial_t^2 (\Id-\Pi_{j_{\bx}}^{\nabla_{\bx}} \otimes \Id^t)u, \partial_t w_{j_{\bx},\infty})_{L_e^2(Q_T)}
     \\ & \quad + (\partial_t (\Id-\Pi_{j_{\bx}}^{\nabla_{\bx}} \otimes \Id^t)u(\cdot,0), \partial_t w_{j_{\bx},\infty} (\cdot,0))_{L^2(\Omega)}
     \\ & \quad + (c^2\nabla_{\bx} (\Id-\Pi_{j_{\bx}}^{\nabla_{\bx}} \otimes \Id^t)u, \nabla_{\bx} \partial_t w_{j_{\bx},\infty})_{L_e^2(Q_T)}
    \\ & = (\partial_t^2 (\Id-\Pi_{j_{\bx}}^{\nabla_{\bx}} \otimes \Id^t)u, \partial_t w_{j_{\bx},\infty})_{L_e^2(Q_T)}
     \\ & \quad + (\partial_t (\Id-\Pi_{j_{\bx}}^{\nabla_{\bx}} \otimes \Id^t)u(\cdot,0), \partial_t w_{j_{\bx},\infty}(\cdot,0))_{L^2(\Omega)}.
\end{aligned}
\end{equation*}
The commutativity of $\Pg_{j_{\bx}}$ with $\partial_t$ yields
\begin{equation*}
\begin{aligned}
    \A((\Id-\Pi_{j_{\bx}}^{\nabla_{\bx}} \otimes \Id^t) u, w_{j_{\bx},\infty}) & = ((\Id-\Pi_{j_{\bx}}^{\nabla_{\bx}} \otimes \Id^t) \partial_t^2  u, \partial_t w_{j_{\bx},\infty})_{L_e^2(Q_T)} 
    \\ & \quad + ((\Id-\Pi_{j_{\bx}}^{\nabla_{\bx}} \otimes \Id^t)\partial_t  u(\cdot,0), \partial_t w_{j_{\bx},\infty}(\cdot,0))_{L^2(\Omega)},
\end{aligned}
\end{equation*}
from which we deduce, using the Cauchy--Schwarz inequality,
\begin{equation*}
\begin{aligned}
    \A((\Id-\Pi_{j_{\bx}}^{\nabla_{\bx}} \otimes \Id^t) u, w_{j_{\bx},\infty}) \lesssim \Big( \|(\Id -\Pg_{j_{\bx}} \otimes \Id^t)\partial_t^2 u\|_{L_e^2(Q_T)} + \|(\Id^{\bx} -\Pg_{j_{\bx}}) \partial_t u(\cdot,0)\|_{L^2(\Omega)}\Big) \| w_{j_{\bx},\infty} \|_{\mathcal{V}(Q_T)}.
\end{aligned}
\end{equation*}
This, together with \eqref{eq:78}, yields \eqref{eq:32}.
\end{proof}
\begin{proposition}
Let $u \in \mathcal{V}(Q_T)$, and let $P_{\infty,j_t} u$ and $P_{j_{\bx},\infty} u$ be the solutions to the discrete problems \eqref{eq:26} and \eqref{eq:27}, respectively. If $u \in H^2(\Omega) \otimes H_{0,\bullet}^{p_t+3}(0,T)$, then
\begin{equation} \label{eq:79}
    \|(P_{\infty,j_t}-\Id) u\|_{L^2(Q_T)} \lesssim 2^{-j_t(p_t+1)} \|u\|_{H^2(\Omega) \otimes H^{p_t+3}(0,T)}.
\end{equation}
Similarly, assuming $u \in H_0^{p_{\bx}+1}(\Omega) \otimes H^2(0,T)$, we have
\begin{equation} \label{eq:80}
    \|(P_{j_{\bx},\infty}-\Id) u\|_{L^2(Q_T)} \lesssim 2^{-j_{\bx}(p_{\bx}+1)} \left( \|u\|_{H^{p_{\bx}+1}(\Omega) \otimes H^2(0,T)} + \|\partial_t u(\cdot,0)\|_{H^{p_{\bx}+1}(\Omega)} \right).
\end{equation}
\end{proposition}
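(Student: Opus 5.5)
We will prove both estimates by a triangle inequality through the tensor-product projections, controlling the discrete part with Proposition~\ref{lem:1} and the remaining part with the one-dimensional approximation properties \eqref{eq:13}--\eqref{eq:14}.

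\textbf{Estimate \eqref{eq:79}.} We split
\begin{equation*}
\|(P_{\infty,j_t}-\Id)u\|_{L^2(Q_T)} \le \|(P_{\infty,j_t}-\Id^{\bx}\otimes\Ppt_{j_t})u\|_{L^2(Q_T)} + \|(\Id^{\bx}\otimes\Ppt_{j_t}-\Id)u\|_{L^2(Q_T)}.
\end{equation*}
The second term is bounded by $2^{-j_t(p_t+1)}\|u\|_{L^2(\Omega)\otimes H^{p_t+3}(0,T)}$ using the second bound in \eqref{eq:14}, applied pointwise in $\bx$ and integrated over $\Omega$. For the first term, the function $(P_{\infty,j_t}-\Id^{\bx}\otimes\Ppt_{j_t})u$ vanishes at $t=0$, since both $H^1_0(\Omega)\otimes\Vt$ and the range of $\Ppt_{j_t}$ lie in $H^2_{0,\bullet}$ in time. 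A Poincar\'e inequality in time therefore bounds its $L^2(Q_T)$ norm by $T\|\partial_t\cdot\|_{L^2(Q_T)}$, which is controlled by $\sqrt{T}$ times the $\mathcal V(Q_T)$ norm. Applying \eqref{eq:31}, we are reduced to bounding $\|(\Id^{\bx}\otimes\Ppt_{j_t}-\Id)\nabla_{\bx}\cdot(c^2\nabla_{\bx}u)\|_{L^2(Q_T)}$. By \eqref{eq:14} this is at most $2^{-j_t(p_t+1)}\|\nabla_{\bx}\cdot(c^2\nabla_{\bx}u)\|_{L^2(\Omega)\otimes H^{p_t+3}(0,T)}$. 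Since $c$ is smooth enough (e.g.\ $c\in W^{1,\infty}$), this is in turn at most $2^{-j_t(p_t+1)}\|u\|_{H^2(\Omega)\otimes H^{p_t+3}(0,T)}$.

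\textbf{Estimate \eqref{eq:80}.} We write analogously
\begin{equation*}
\|(P_{j_{\bx},\infty}-\Id)u\|_{L^2(Q_T)} \le \|(P_{j_{\bx},\infty}-\Pg_{j_{\bx}}\otimes\Id^t)u\|_{L^2(Q_T)} + \|(\Pg_{j_{\bx}}\otimes\Id^t-\Id)u\|_{L^2(Q_T)}.
\end{equation*}
The second term is at most $2^{-j_{\bx}(p_{\bx}+1)}\|u\|_{H^{p_{\bx}+1}(\Omega)\otimes L^2(0,T)}$ by the second bound in \eqref{eq:13}.

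For the first term we again need $(P_{j_{\bx},\infty}-\Pg_{j_{\bx}}\otimes\Id^t)u$ to vanish at $t=0$. This holds because $u(\cdot,0)=0$, as $u\in\mathcal V(Q_T)$, and $P_{j_{\bx},\infty}u$ also has zero trace at $t=0$. The temporal Poincar\'e inequality then gives control by the $\mathcal V$ norm, and \eqref{eq:32} applies. Its right-hand side contains two terms.
\begin{itemize}
\item The volume term is bounded using the $L^2$ estimate in \eqref{eq:13} applied to $\partial_t^2u(\cdot,t)$, which gives $2^{-j_{\bx}(p_{\bx}+1)}\|u\|_{H^{p_{\bx}+1}(\Omega)\otimes H^2(0,T)}$.
\item The initial term is bounded by $2^{-j_{\bx}(p_{\bx}+1)}\|\partial_tu(\cdot,0)\|_{H^{p_{\bx}+1}(\Omega)}$. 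This accounts for the extra term in \eqref{eq:80}.
\end{itemize}

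The only points needing care are the temporal Poincar\'e step, which hinges on the zero initial traces, and the commutation of $\Ppt_{j_t}$ with the spatial operator in the first estimate, which is already built into \eqref{eq:31}. We do not expect a genuine obstacle. The step to check most carefully is that the $\mathcal V$-norm bound from Proposition~\ref{lem:1} controls the $L^2(Q_T)$ norm, via $\|v\|_{L^2(Q_T)}\le T\|\partial_tv\|_{L^2(Q_T)}$ for functions vanishing at $t=0$.
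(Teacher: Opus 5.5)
Your proposal is correct and follows the paper's proof essentially step for step. You use a triangle inequality through $\Id^{\bx}\otimes\Ppt_{j_t}$ (resp.\ $\Pg_{j_{\bx}}\otimes\Id^t$) and a Poincar\'e inequality to pass from the $\mathcal{V}(Q_T)$ norm to $L^2(Q_T)$, then conclude with \eqref{eq:31} and \eqref{eq:14} (resp.\ \eqref{eq:32} and \eqref{eq:13}), exactly as the paper does. You also note that bounding $\|\nabla_{\bx}\cdot(c^2\nabla_{\bx}u)\|_{L^2(\Omega)\otimes H^{p_t+3}(0,T)}$ by $\|u\|_{H^2(\Omega)\otimes H^{p_t+3}(0,T)}$ requires some smoothness of $c$ (e.g.\ $c\in W^{1,\infty}(\Omega)$), which makes explicit an assumption the paper leaves implicit.
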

\begin{proof} Applying the triangle inequality, the Poincar\'e inequality, and the bound \eqref{eq:31}, we obtain
\begin{align*}
    \| (P_{\infty,j_t}-\Id)u \|_{L^2(Q_T)} & \le \|(P_{\infty,j_t} - \Id^{\bx} \otimes \Pi_{j_t}^{\partial_t^2})u \|_{L^2(Q_T)} + \|(\Id^{\bx} \otimes \Pi_{j_t}^{\partial_t^2}-\Id) u\|_{L^2(Q_T)}  
    \\ & \lesssim \|(P_{\infty,j_t} - \Id^{\bx} \otimes \Pi_{j_t}^{\partial_t^2})u \|_{\mathcal{V}(Q_T)} + \|(\Id^{\bx} \otimes \Pi_{j_t}^{\partial_t^2}-\Id)u\|_{L^2(Q_T)}  
    \\ & \lesssim \| (\Id^{\bx} \otimes \Ppt_{j_t}-\Id) \nabla_{\bx} \cdot (c^2 \nabla_{\bx} u)\|_{L^2(Q_T)} + \|(\Id^{\bx} \otimes \Pi_{j_t}^{\partial_t^2}-\Id) u\|_{L^2(Q_T)}.
\end{align*}
Estimate~\eqref{eq:79} then follows from~\eqref{eq:14}.

\noindent
Similarly, by the triangle inequality, the Poincar\'e inequality, and the bound \eqref{eq:32}, we have
\begin{align*}
    \|(P_{j_{\bx},\infty}-\Id)u\|_{L^2(Q_T)} & \le \|(P_{j_{\bx},\infty} - \Pi_{j_{\bx}}^{\nabla_{\bx}}\otimes\Id^{t}) u\|_{L^2(Q_T)} + \| (\Pi_{j_{\bx}}^{\nabla_{\bx}}\otimes\Id^{t} - \Id) u\|_{L^2(Q_T)}
    \\ & \lesssim \|(P_{j_{\bx},\infty} - \Pi_{j_{\bx}}^{\nabla_{\bx}}\otimes\Id^{t}) u\|_{\mathcal{V}(Q_T)} + \| (\Pi_{j_{\bx}}^{\nabla_{\bx}}\otimes\Id^{t} - \Id) u\|_{L^2(Q_T)}
    \\ & \lesssim \|( \Pi_{j_{\bx}}^{\nabla_{\bx}}\otimes\Id^{t}-\Id)\partial_t^2 u\|_{L^2(Q_T)} + \|( \Pi_{j_{\bx}}^{\nabla_{\bx}}-\Id^{\bx})\partial_t u(\cdot,0)\|_{L^2(\Omega)} 
    \\ & \hspace{4.9cm} + \| (\Pi_{j_{\bx}}^{\nabla_{\bx}}\otimes\Id^{t} - \Id) u\|_{L^2(Q_T)}.
\end{align*}
Using~\eqref{eq:13}, we finally obtain~\eqref{eq:80}.
\end{proof}
\begin{remark}[Convergence of the discretization and semidiscretizations]
If $u$ satisfies the regularity assumption~\eqref{eq:15}, the error estimate in \eqref{eq:16} implies that
\begin{equation*}
    \lim_{(j_{\bx},j_t) \to (\infty,\infty)} P_{j_{\bx},j_t} u = u \quad \text{in~} L^2(Q_T).
\end{equation*}
Additionally, if~$u \in H^2(\Omega) \otimes H_{0,\bullet}^{p_t+3}(0,T)$, then estimate~\eqref{eq:79} ensures that
\begin{equation} \label{eq:81}
    \lim_{j_t \to \infty} P_{\infty,j_t} u = u\quad \text{in~} L^2(Q_T),
\end{equation}
and, if $u \in H_0^{p_{\bx}+1}(\Omega) \otimes H^2(0,T)$, estimate~\eqref{eq:80} ensures that
\begin{equation*}
    \lim_{j_{\bx} \to \infty} P_{j_{\bx},\infty} u = u\quad \text{in~} L^2(Q_T).
\end{equation*}
\eremk
\end{remark}

\bibliographystyle{plain}
\bibliography{bibliography}

\end{document}